\begin{document}

\renewcommand{\refname}{References}
\renewcommand{\abstractname}{Abstract}
\renewcommand{\proofname}{Proof:}
\newtheorem{te}{Theorem}[subsection]
\newtheorem{lem}{Lemma}[subsection]
\newtheorem{cor}{Corollary}
\theoremstyle{definition}
\newtheorem{df}{Definition}[subsection]

\begin{center}

{\Large\bf Geometric Theory of Parshin's Residues. Toric Neighborhoods of Parshin's Points.}

\bigskip

{\footnotesize by Mikhail Mazin, University of Toronto.}

\end{center}

\begin{abstract}

The paper consist of two parts. In the first part we introduce flags of lattices and associated
injective systems of (non-normal) cones and projective systems of (non-normal) affine toric
varieties. We study the associated field of multidimensional Laurent power series. In the second
part we use the resolution of singularities techniques to study the geometry near a complete flag
of subvarieties and the Parshin's residues. The first part plays the role of a standard coordinate
neighborhood for Parshin's points.

\end{abstract}

\section{Flags of Lattices and the Associated Systems of Toric Varieties.}

\subsection{Ordered Abelian Groups.}

\begin{df}
An Abelian group $G$ subdivided into subsets $G=G_-\sqcup \{0\}\sqcup G_+$ is called an {\it
ordered abelian group} if

(1) $a\in G_- \Rightarrow -a\in G_+;$

(2) $G_+$ is a semigroup.
\end{df}

We write $a>b$ if $a-b\in G_+$ and $a<b$ if $a-b\in G_-.$ Easy to see that this gives a total
ordering of $G.$ The elements of $G_+$ are called positive and the elements of $G_-$ are called
negative.

\begin{df}
A subgroup $H\subset G$ is called {\it isolated} if for any two positive elements $g_1>g_2>0$ such
that $g_1\in H$ it follows that $g_2\in H.$
\end{df}

It follows immediately that if $h>0,$ $g>0,$ $h\in H,$ and $g\not\in H$ than $g>h.$

\begin{te}
Let $H^1\subset G$ and $H^2\subset G$ be two isolated subgroups. Then either $H_1\subset H_2$ or
$H_2\subset H_1.$
\end{te}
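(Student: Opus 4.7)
The plan is to argue by contradiction, reducing the statement to a direct application of the observation stated just after the definition of isolated subgroup (namely, that if $h, g > 0$, $h \in H$, and $g \notin H$, then $g > h$).

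Suppose neither containment holds. Then I can choose elements $a \in H^1 \setminus H^2$ and $b \in H^2 \setminus H^1$. Since both $H^1$ and $H^2$ are subgroups, the set $H^i \setminus H^j$ is closed under negation (if $-a$ were in $H^2$, then $a$ would be too). The total ordering gives that exactly one of $a, -a$ is positive, and similarly for $b$; so after replacing $a$ by $-a$ and/or $b$ by $-b$ if necessary, I may assume $a > 0$ and $b > 0$.

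Now I invoke the observation twice. Applied with $H = H^1$, taking $h = a$ and $g = b$ (so $h \in H^1$, $g \notin H^1$, both positive), it yields $b > a$. Applied with $H = H^2$, taking $h = b$ and $g = a$, it yields $a > b$. This pair of inequalities contradicts the fact (established as an easy consequence of the axioms of an ordered abelian group) that $<$ is a total order. Hence the initial assumption fails, and one of $H^1 \subset H^2$ or $H^2 \subset H^1$ must hold.

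The only step that requires any care is the reduction to the positive case; beyond that the argument is a two-line application of the stated consequence of isolatedness, so I do not anticipate a genuine obstacle here.
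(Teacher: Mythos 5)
Your proof is correct. The paper in fact states this theorem without giving a proof, treating it as a standard fact about ordered abelian groups; your argument is exactly the standard one, and it correctly reduces to the observation the paper records immediately after the definition of isolated subgroup (that $h>0$, $g>0$, $h\in H$, $g\notin H$ force $g>h$), together with the routine check that one may replace $a$ and $b$ by their negatives to make them positive while staying in $H^1\setminus H^2$ and $H^2\setminus H^1$ respectively.
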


\begin{df}
Let $G=H^k\supset H^{k-1}\supset\dots\supset H^0=\{0\}$ be the tower of all isolated subgroups of
$G.$ Then $k$ is the {\it rank of $G.$}
\end{df}

\begin{te}\label{Ordered Groups}
Suppose that $G$ is isomorphic to $\mathbb Z^n$ and has rank $n.$ Let $G=H^n\supset
H^{n-1}\supset\dots\supset H^0=\{0\}$ be the isolated subgroups of $G.$ Then for all $k$ $H^k$ is
isomorphic to $\mathbb Z^k$ and the order on $G$ is isomorphic to the lexicographic order with
respect to any basis $(e_n,\dots,e_1)$ of $G$ such that for all $k$ $(e_k,\dots, e_1)$ is a basis
in $H^k.$ (The lexicographic order range the elements first with respect to the coefficient at
$e_n,$ than $e_{n-1},$ etc.)
\end{te}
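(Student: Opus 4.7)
The plan is to build an ordered basis adapted to the flag and then verify that the given order coincides with the lex order in this basis. First I would show that each successive quotient $H^k/H^{k-1}$ is torsion-free. Suppose $g\in H^k\setminus H^{k-1}$ with $mg\in H^{k-1}$ for some $m\ge 2$; after replacing $g$ by $-g$ if necessary we may assume $g>0$, whence $mg>0$. The remark after the definition of isolated subgroups, applied to $g>0$ (with $g\notin H^{k-1}$) and $mg>0$ (with $mg\in H^{k-1}$), yields $g>mg$, i.e.\ $(m-1)g<0$. This contradicts $g>0$ and the semigroup property of $G_+$, so $H^k/H^{k-1}$ is torsion-free, hence free.

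Because the flag is strict, $G$ has rank $n$, and each quotient is a nonzero free abelian group, the ranks must climb by exactly one at each step, so $H^k\cong\mathbb Z^k$ and $H^k/H^{k-1}\cong\mathbb Z$. Inductively lifting a generator of each $H^k/H^{k-1}$ (and flipping its sign to make it positive in $G$), I obtain a basis $(e_n,\dots,e_1)$ of $G$ compatible with the flag and with $e_k>0$ for all $k$.

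It remains to check that for $g=\sum_{i=1}^n a_ie_i$, with $j$ the largest index where $a_j\neq 0$, the sign of $g$ equals the sign of $a_j$. I would write $g=a_je_j+h$ with $h\in H^{j-1}$. Since $e_j>0$ and $G_+$ is a semigroup, $a_je_j$ has the sign of $a_j$; and its image in $H^j/H^{j-1}\cong\mathbb Z$ is the nonzero integer $a_j$, so $a_je_j\notin H^{j-1}$. If $h=0$ or $h$ has the same sign as $a_je_j$, the claim follows immediately from the semigroup property. If the signs are opposite, I would apply the isolated-subgroup remark to the two positive elements $a_je_j$ (outside $H^{j-1}$) and $\pm h$ (inside $H^{j-1}$) to deduce $|a_je_j|>|h|$, so that $g$ inherits its sign from $a_je_j$.

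I expect the main technical point to be the torsion-freeness argument in step one; once that is secured, the rank count is automatic and the lex comparison reduces to the short case analysis above, using only the semigroup property of $G_+$ and the single consequence of isolatedness already recorded in the excerpt.
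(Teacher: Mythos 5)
The paper states Theorem \ref{Ordered Groups} without proof (it is a classical fact about ordered abelian groups), so there is no ``paper approach'' to compare against; your argument stands on its own. It is correct and complete. The torsion-freeness step is the crux and you execute it cleanly: if $g\in H^k\setminus H^{k-1}$, $g>0$, and $mg\in H^{k-1}$ with $m\ge 2$, then the isolatedness remark forces $g>mg$, i.e.\ $(m-1)g<0$, contradicting the semigroup property; hence $H^k/H^{k-1}$ is torsion-free and, being finitely generated, free. The rank count $r_0<r_1<\dots<r_n$ with total jump $n$ over $n$ strict inclusions then pins $H^k\cong\mathbb Z^k$. Your final case analysis for $g=a_je_j+h$ with $h\in H^{j-1}$ correctly reduces to the same isolatedness remark when the signs of $a_je_j$ and $h$ disagree. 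One small point worth making explicit: your verification of the lex property uses only that $(e_1,\dots,e_k)$ is a basis of $H^k$ and that each $e_k>0$ (after the sign flip), not any special feature of the basis you constructed, so it already covers the ``any compatible basis'' phrasing in the theorem --- the sign normalization is absorbed into ``isomorphic to the lexicographic order.''
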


\begin{df}
Let $G$ be an ordered abelian group of rank greater than $1.$ Let $G'\subsetneq G$ be its maximal
proper isolated subgroup. Than $H_+:=G_+\backslash G'$ is called the {\it upper half-space} in $G.$
\end{df}

For groups of rank $1$ we set $H_+=G_+$ and for $G=\{0\}$ we set $H_+=\{0\}.$

\subsection{Flags of Lattices. Injective Systems of Cones and Projective Systems of Toric
Varieties.}

Let $L^n$ be an ordered abelian group of rank $n$ isomorphic to $\mathbb Z^n.$ Let $L^n\supset
\hat{L}^{n-1}\supset\dots\supset \hat{L}^1\supset L^0$ be the isolated subgroups of $L^n.$ Let
$L^i\subset \hat{L}^i$ for $1\le i\le n-1$ be subgroups of full rank such that $L^n\supset
L^{n-1}\supset\dots\supset L^0$.

\begin{df}
$L^n\supset L^{n-1}\supset\dots\supset L^0$ is called a {\it flag of lattices.}
\end{df}

We use the multiplicative notations for the operation in $L^n.$

Let $H^k_+$ be the upper half-space in $L^k$ for $k=0,\dots,n.$ We are interested in the semigroup
$L=H^0_+\cup\dots\cup H^n_+$ (i.e. we take $0,$ then all the positive elements of $L^1,$ then all
the positive elements in $L^2$ which are not in $\hat{L}^1,$ etc.). Note, that $L$ is not finitely
generated. In particular, it doesn't correspond to any algebraic variety. However, one can consider
$L$ as a union of countably many cones, which are finitely generated.

\begin{df}
Let $\hat{\cal C}$ be the set of all simple cones $\hat{C}$ in $L^n$ such that for all $k=1,\dots
n$ exactly $k$ generators of $\hat{C}$ belong to $\hat{L}^k_+$ (i.e. one of the generators is
exactly the generator of $\hat{L}^1_+,$ another one belong to $\hat{L}^2_+,$ etc.).
\end{df}

\begin{df}
Let $\hat{C}\in \hat{\cal C}$ and let $(x_1,\dots,x_n)$ be the generators of $\hat{C}$ such that
$x_i\in\hat{L}_k$ for $i=1,\dots,n.$ Then we call $(x_1,\dots,x_n)$ the {\it standard generators}
of $\hat{\cal C}.$
\end{df}

The cones from $\hat{\cal C}$ are not subsets of $L.$ So we need to intersect them with $L.$

\begin{df}
Let ${\cal C}=\{C=\hat{C}\cap L:\hat{C}\in\hat{\cal C}\}.$
\end{df}

\begin{lem}
Elements of ${\cal C}$ are finitely generated.
\end{lem}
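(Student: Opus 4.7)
The plan is to exhibit an explicit finite generating set for $C$ by combining Dickson's lemma with a finite-index sublattice trick.

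Identify $\hat C$ with $\mathbb Z^n_{\ge 0}$ via the injection $\phi:(a_1,\dots,a_n)\mapsto\prod x_i^{a_i}$, so the task becomes showing that $C_\ast:=\phi^{-1}(C)$ is a finitely generated sub-semigroup of $\mathbb Z^n_{\ge 0}$. For each $k$, finiteness of $\hat L^k/L^k$ (both lattices have rank $k$) yields a smallest positive integer $m_k$ with $x_k^{m_k}\in L^k$; since an isolated subgroup of an ordered abelian group is pure (a short consequence of the isolation axiom applied to $x,2x,\dots,nx$), $x_k^{m_k}$ avoids $\hat L^{k-1}$, so it lies in $H^k_+\subset L$ and gives $m_k e_k\in C_\ast$.

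Next I form the full-rank sublattice $\Lambda:=\bigoplus_i m_i\mathbb Z e_i$ of index $\prod m_i$ in $\mathbb Z^n$, and for each of the finitely many cosets $\alpha+\Lambda$, I apply Dickson's lemma to $C_\ast\cap(\alpha+\Lambda)\subset\mathbb Z^n_{\ge 0}$ to produce a finite set $F_\alpha$ of componentwise-minimal elements. Let $G:=\bigcup_\alpha F_\alpha$. A short computation shows $C_\ast\cap\Lambda=\bigoplus_i m_i\mathbb Z_{\ge 0}e_i$ (any $\sum c_i m_i e_i$ with $c_i\ge 0$ lies in $L^{\max\{i:c_i>0\}}$), whose minimal nonzero elements are exactly the $m_i e_i$, so $G$ already contains every $m_i e_i$.

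To see $G$ generates $C_\ast$: given a nonzero $y\in C_\ast$, Dickson supplies some $f\in F_{[y]}$ with $f\le y$ componentwise; then $y-f\in\Lambda\cap\mathbb Z^n_{\ge 0}=\bigoplus_i m_i\mathbb Z_{\ge 0}e_i$, so $y-f=\sum b_i m_i e_i$ with $b_i\ge 0$, and thus $y=f+\sum b_i(m_i e_i)$ is a non-negative integer combination of elements of $G$. Transporting via $\phi$ yields the desired finite generating set for $C$. The step that will require the most care is verifying that $C_\ast$ is closed under adding any $m_i e_i$ (so that the partial sums in the above decomposition stay in $C_\ast$); this reduces to the key observation that $x_i^{m_i}\in L^i\subset L^k$ for every $k\ge i$, so multiplication by $x_i^{m_i}$ preserves membership in $L$ and can only raise the level of an element.
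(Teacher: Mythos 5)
Your argument is correct, and it differs from the paper's in a way worth noting. The paper works inside the finite box $\hat{P}=\{x_1^{d_1}\cdots x_n^{d_n}:0\le d_i\le k_i\}$ and performs a division-with-remainder, writing $x_1^{m_1}\cdots x_n^{m_n}=y_1^{l_1}\cdots y_n^{l_n}\cdot x_1^{d_1}\cdots x_n^{d_n}$ with the slightly nonstandard convention $d_i\neq 0\Leftrightarrow m_i\neq 0$, and then must prove directly (using the ordering and the isolated subgroups) that the remainder $x_1^{d_1}\cdots x_n^{d_n}$ lies in $C$. You instead invoke Dickson's lemma on each of the finitely many cosets of $\Lambda=\bigoplus m_i\mathbb Z e_i$, so the ``remainders'' $f\in F_\alpha$ lie in $C_\ast$ by construction, and the $L$-membership verification is shifted to the single, cleaner computation that $\bigoplus m_i\mathbb Z_{\ge 0}e_i\subset C_\ast$ (each $\sum c_im_ie_i$ sits in $H^j_+$ for $j=\max\{i:c_i>0\}$). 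That computation is the same in spirit as the step the paper uses for its remainders, but it is applied to simpler elements. The paper's route is more elementary (no Dickson, the box is obviously finite), while yours packages the finiteness more cleanly; both are valid. One small point: your final worry about the partial sums $f+\sum_{i\le j}b_i m_ie_i$ remaining in $C_\ast$ is unnecessary. Once you know $G\subset C_\ast$ and that every element of $C_\ast$ is a nonnegative integer combination of elements of $G$, the semigroup property of $C_\ast$ (which holds because $L=H^0_+\cup\dots\cup H^n_+$ is itself a semigroup, as the paper implicitly uses) closes the loop automatically; the nontrivial content is only the decomposition, not closure under the partial sums.
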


\begin{proof}
Let $C\in {\cal C}$ and let $(x_1,\dots,x_n)$ be the standard generators of $\hat{C}.$ Let
$k_1,\dots,k_n$ be the smallest positive integers such that $x_i^{k_i}\in C.$ Denote
$y_i:=x_i^{k_i}.$ Let $\hat{P}\subset \hat{C}$ be the subset in $\hat{C}$ consisting of all the
monomials $x_1^{d_1}\dots x_n^{d_n}$ such that $0\le d_i\le k_i$ for $i=1,\dots,n.$ Let
$P=\hat{P}\cap C.$ $P$ is obviously finite. Let us prove that it generates $C.$

Any element $x_1^{m_1}\dots x_n^{m_n}\in\hat{C}$ can be factored in the following way:
$x_1^{m_1}\dots x_n^{m_n}=y_1^{l_1}\dots y_n^{l_n}x_1^{d_1}\dots x_n^{d_n},$ where $x_1^{d_1}\dots
x_n^{d_n}\in \hat{P},$ $l_i\ge 0,$ and $d_i\neq 0$ if and only if $m_i\neq 0$ for $i=1,\dots,n.$
Therefore, we only need to prove that if $x_1^{m_1}\dots x_n^{m_n}\in C$ then $x_1^{d_1}\dots
x_n^{d_n}\in C$ as well.

Let $x_1^{m_1}\dots x_n^{m_n}\in C.$ Let $j$ be the biggest number such that $m_j\neq 0,$ i.e.
$x_1^{m_1}\dots x_n^{m_n}=x_1^{m_1}\dots x_n^{m_j}$ and $m_j\neq 0.$ Then $d_n=\dots=d_{j+1}=0$ and
$d_j\neq 0$ as well. Since $x_1^{m_1}\dots x_n^{m_j}\in\ C,$ it follows that $x_1^{m_1}\dots
x_n^{m_j}\in L^j$ and $x_1^{m_1}\dots x_n^{m_j}\notin \hat{L}^{j-1}.$ Since $y_1,\dots,y_j\in L^j,$
$x_1^{d_1}\dots x_n^{d_n}\in L^j$ as well. Also, since $d_j\neq 0,$ $x_1^{d_1}\dots x_n^{d_n}\notin
\hat{L}_{j-1}.$ So, $x_1^{d_1}\dots x_n^{d_n}\in L$ and, therefore, $x_1^{d_1}\dots x_n^{d_n}\in
C.$
\end{proof}

\begin{lem}\label{Cones containing finite sets of positive elements}
Let $K\subset L^n_+$ be a finite set of positive elements in $L^n.$ Then there exist
$\hat{C}\in\hat{\cal C}$ such that $K\subset\hat{C}.$
\end{lem}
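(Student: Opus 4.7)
I would argue by induction on $n$. The base case $n=1$ is immediate: $L^1 \cong \mathbb{Z}$ and $\hat{\cal C}$ contains only the cone generated by the positive generator of $\hat{L}^1_+$, which already contains every positive element of $L^1$.

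For the inductive step, the goal is to exhibit a $\mathbb{Z}$-basis $(x_1,\dots,x_n)$ of $L^n$ with $x_i \in \hat{L}^i_+$ such that every $k \in K$ has non-negative exponents in this basis. The strategy is to first peel off the ``top coordinate'' $x_n$ and then invoke the inductive hypothesis inside $\hat{L}^{n-1}$. Let $\pi\colon L^n \to L^n/\hat{L}^{n-1}\cong \mathbb{Z}$ be the quotient with its induced ordering. Because $\hat{L}^{n-1}$ is the maximal proper isolated subgroup, every $k \in K$ satisfies $\pi(k)\geq 0$. Fix any $z \in L^n$ with $\pi(z)=1$, so each $k \in K$ factors uniquely as $k=z^{\pi(k)}\hat{k}^{(0)}$ with $\hat{k}^{(0)} \in \hat{L}^{n-1}$.

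The key step is to replace $z$ by $x_n := z\,u$ for some $u \in \hat{L}^{n-1}$ chosen so that the new remainders $\hat{k}:=\hat{k}^{(0)} u^{-\pi(k)}$ all lie in $\hat{L}^{n-1}_+\cup \{1\}$. The required inequalities are $u^{\pi(k)} \leq \hat{k}^{(0)}$ for each $k$ with $\pi(k)\geq 1$. Since the rank-$(n-1)$ ordered abelian group $\hat{L}^{n-1}$ has no minimum element, I can pick $u$ negative and smaller than every $\hat{k}^{(0)}$ that appears; then for any positive integer $m$ one has $u^m \leq u$, so $u^{\pi(k)} \leq u \leq \hat{k}^{(0)}$ as needed. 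The element $x_n$ is automatically positive in $L^n$ because $\pi(x_n)=1$ and $\hat{L}^{n-1}$ is isolated.

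With $x_n$ fixed, $\{\hat{k}:k\in K\}\setminus\{1\}$ is a finite subset of $\hat{L}^{n-1}_+$; applying the inductive hypothesis to it inside $\hat{L}^{n-1}$ produces standard generators $(x_1,\dots,x_{n-1})$, which form a $\mathbb{Z}$-basis of $\hat{L}^{n-1}$ with $x_i \in \hat{L}^i_+$. Together with $x_n$ they give a $\mathbb{Z}$-basis of $L^n$ satisfying all membership conditions, hence span a cone $\hat C \in \hat{\cal C}$. Each $k \in K$ then factors as $k = x_n^{\pi(k)}\hat k$, a monomial in the new basis with non-negative exponents, so $K \subset \hat C$. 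The only genuinely subtle point is the construction of $u$, which exploits the non-Archimedean nature of $\hat{L}^{n-1}$ to handle all the finitely many obstructions simultaneously; everything else is bookkeeping.
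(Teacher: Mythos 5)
Your proof is correct, but it takes a genuinely different route from the paper. The paper's argument is incremental: it shows that starting from any cone $\hat{C}\in\hat{\cal C}$, one may absorb a single additional positive element $a=x_1^{k_1}\cdots x_n^{k_n}$ by replacing exactly one standard generator $x_j$ (the top one that $a$ involves) with $x'_j=x_1^{\min(k_1,0)}\cdots x_{j-1}^{\min(k_{j-1},0)}x_j$, and then iterates over the finitely many elements of $K$. Your argument instead proceeds by induction on the rank $n$: you project to $L^n/\hat L^{n-1}\cong\mathbb Z$, observe every $k\in K$ has non-negative image, and then choose a single ``top generator'' $x_n=zu$ in one stroke, exploiting the fact that $\hat L^{n-1}$ has no minimum to pick $u$ more negative than all the finitely many remainders $\hat k^{(0)}$ (the non-Archimedean trick $u^{m}\le u$ for $u<1$, $m\ge 1$). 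You then recurse into $\hat L^{n-1}$. Both proofs are correct and roughly the same length. The paper's version is more algorithmic and buys you an explicit description of the change from $\hat C$ to $\hat C'$ (a unipotent transition matrix), which is convenient since the paper later needs exactly these transition matrices for changes of coordinates on the toric varieties $T_C$; it also delivers directly that $\hat{\cal C}$ is an injective system (any two cones sit inside a third). Your version is conceptually cleaner as a direct existence proof — it exhibits the final cone without passing through intermediate ones — and it foregrounds the non-Archimedean structure of the ordering, which is hidden in the paper's choice of exponents $\min(k_i,0)$. One small presentational remark: it is worth saying explicitly (as you implicitly use) that the isolated subgroups of $\hat L^{n-1}$ are exactly $\hat L^{n-1}\supset\dots\supset\hat L^0$, so the inductive hypothesis applies to the same flag of $\hat L^i$'s, and that $\hat{\cal C}$ depends only on these $\hat L^i$'s and not on the $L^i$'s, which is what licenses the induction.
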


\begin{proof}
It is enough to prove that if $\hat{C}\subset\hat{\cal C}$ and $a\in L^n_+$ then there exist
another cone $\hat{C}'\in\hat{\cal C}$ such that $\hat{C}\subset\hat{C}'$ and $a\in\hat{C}'.$
Indeed, using this fact one can start from any cone in $\hat{\cal C}$ and add elements of $K$ one
by one.

Let $(x_1,\dots,x_n)$ be the standard generators of $\hat{C}.$ Let $a=x_1^{k_1}\dots x_n^{k_n}.$
Let $j$ be the biggest number such that $k_j\neq 0.$ Note that $k_j>0,$ otherwise $a$ is not
positive. Take the cone $\hat{C}'$ generated by $(x_1,\dots,x_{j-1},x'_j,x_{j+1},\dots,x_n),$ where
$x'_j=x_1^{min(k_1,0)}\dots x_{j-1}^{min(k_{j-1},0)}x_j$ (i.e. $x'_j$ is obtained from
$a=x_1^{k_1}\dots x_j^{k_j}$ by removing all the factors with positive powers and replacing
$x_j^{k_j}$ by $x_j$).

$\hat{C}'\in \hat{\cal C}$ because $x'_j\in L_j$ and the transition matrix from $(x_1,\dots,x_n)$
to $(x_1,\dots x_{j-1},x'_j,x_{j+1},\dots,x_n)$ is integrally invertible. Indeed,
$x_j=x_1^{-min(k_1,0)}\dots x_{j-1}^{-min(k_{j-1},0)}x'_j.$

We need to check that $\hat{C}\subset\hat{C}'$ and $a\in\hat{C}'.$ Indeed, all the generators of
$\hat{C}$ except for $x_j$ are also generators of $\hat{C}$ and $x_j\in\hat{C}'$ because
$x_j=x_1^{-min(k_1,0)}\dots x_{j-1}^{-min(k_{j-1},0)}x'_j$ and $-min(k_i,0)\ge 0$ for
$i=1,\dots,j-1.$ Also, $a=x_1^{k_1}\dots x_n^{k_n}=x_1^{k_1}\dots x_j^{k_j}=x_1^{k_1}\dots
x_{j-1}^{k_{j-1}}(x_1^{-min(k_1,0)}\dots x_{j-1}^{-min(k_{j-1},0)}x'_j)^{k_j}=x_1^{k_1-k_j
min(k_1,0)}\dots x_{j-1}^{k_{j-1}-k_j min(k_{j-1},0)}(x')_j^{k_j}.$ Since $k_j>0,$ it follows, that
$k_i-k_j min(k_i,0)\ge 0.$ Therefore, $a\in\hat{C}'.$
\end{proof}

\begin{cor}
Let $G\subset L^n_+$ be any finitely generated semigroup in $L^n_+.$ Then there exist
$\hat{C}\in\hat{\cal C}$ such that $G\subset\hat{C}.$
\end{cor}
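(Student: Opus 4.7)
The plan is to reduce the corollary directly to Lemma~\ref{Cones containing finite sets of positive elements} by taking $K$ to be a finite generating set of the semigroup $G$.

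First, I would choose a finite set $K=\{a_1,\dots,a_m\}\subset L^n_+$ such that every element of $G$ is a product $a_1^{n_1}\cdots a_m^{n_m}$ with non-negative integer exponents $n_i$; this is possible because $G$ is by assumption finitely generated and contained in $L^n_+$. Then I would apply Lemma~\ref{Cones containing finite sets of positive elements} to $K$ to obtain a cone $\hat{C}\in\hat{\cal C}$ with $K\subset\hat{C}$.

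The second step is to observe that every $\hat{C}\in\hat{\cal C}$ is not merely a set but a semigroup under the multiplicative operation of $L^n$: by definition its elements are exactly the monomials $x_1^{d_1}\cdots x_n^{d_n}$ in the standard generators with $d_i\ge 0$, and such monomials are closed under multiplication. Therefore, once $a_1,\dots,a_m\in\hat{C}$, any product $a_1^{n_1}\cdots a_m^{n_m}$ also lies in $\hat{C}$, which gives $G\subset\hat{C}$.

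There is no real obstacle here; the only subtlety worth stating explicitly is the closure of the cones in $\hat{\cal C}$ under the group operation, since the lemma is phrased set-theoretically (``finite set of positive elements'') rather than semigroup-theoretically. Once this closure is recorded, the corollary follows in two lines from the lemma.
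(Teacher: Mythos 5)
Your argument is correct and is exactly the intended one: apply Lemma~\ref{Cones containing finite sets of positive elements} to a finite generating set of $G$ and then use that each $\hat{C}\in\hat{\cal C}$ is multiplicatively closed. The paper leaves the corollary without proof precisely because this two-line reduction is the obvious route.
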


\begin{cor}
Let $G\subset L$ be any finitely generated semigroup in $L.$ Then there exist $C\in{\cal C}$ such
that $G\subset C.$
\end{cor}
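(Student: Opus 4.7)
The plan is to reduce this corollary immediately to the previous one by intersecting with $L$. Since $L = H^0_+\cup H^1_+\cup\dots\cup H^n_+$ and each $H^k_+$ for $k\ge 1$ is contained in $L^n_+$ (while $H^0_+=\{0\}$), any subset $G\subset L$ sits inside $L^n_+\cup\{0\}$.

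First I would take a finite semigroup generating set $g_1,\dots,g_m$ of $G$ and discard the identity element if it appears; the remaining generators lie in $L^n_+$ and still generate (as a semigroup) all of $G$ minus possibly the identity. Applying the previous corollary (or, directly, Lemma \ref{Cones containing finite sets of positive elements} to the finite set $\{g_1,\dots,g_m\}\cap L^n_+$) yields a cone $\hat C\in\hat{\mathcal C}$ with $\{g_1,\dots,g_m\}\subset\hat C$. Since $\hat C$ is a simple cone and thus contains the identity, we in fact get $G\subset\hat C$.

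Now set $C:=\hat C\cap L$, which by definition belongs to $\mathcal C$. From $G\subset L$ and $G\subset\hat C$ we conclude $G\subset\hat C\cap L=C$, as required.

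There is essentially no obstacle: all the nontrivial content has been packaged into Lemma \ref{Cones containing finite sets of positive elements} and its first corollary, and the present statement is just the observation that the enlargement construction there respects the restriction to $L$. The only thing worth being careful about is that $G$ may contain the identity (equivalently, that $0\in H^0_+\subset L$), but this causes no trouble because the identity lies in every element of $\hat{\mathcal C}$ and hence in every element of $\mathcal C$.
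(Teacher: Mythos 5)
Your proof is correct and takes the route the paper clearly intends: reduce to Lemma \ref{Cones containing finite sets of positive elements} (via the preceding corollary) by observing $G\subset L^n_+\cup\{0\}$, obtain $\hat C\in\hat{\cal C}$ containing the generators, and then intersect with $L$ to land in ${\cal C}$. Your care about the identity element (which lies in every cone and so causes no trouble) is exactly the right detail to flag.
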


\begin{cor}
$L=\bigcup\limits_{C\in {\cal C}}C.$
\end{cor}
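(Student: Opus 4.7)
The plan is to show the two inclusions separately, with one being immediate from the definition and the other reducing to Lemma \ref{Cones containing finite sets of positive elements} applied to a single element.

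First I would dispose of the easy direction: for any $C \in {\cal C}$ we have $C = \hat{C} \cap L$ by definition, so $C \subset L$, and therefore $\bigcup_{C \in {\cal C}} C \subset L$. No work is needed here.

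For the reverse inclusion, I would pick an arbitrary $a \in L$ and exhibit a cone $C \in {\cal C}$ containing it. If $a = 0$, the identity element lies in every cone of $\hat{\cal C}$ (as $x_1^0\cdots x_n^0$), and it lies in $L$ by the convention $H^0_+=\{0\}$, so $0 \in C$ for every $C \in {\cal C}$. If $a \neq 0$, then $a \in H^k_+$ for some $k \ge 1$; since $H^k_+ \subset L^k_+ \subset L^n_+$ (the order on each $L^k$ being induced from that of $L^n$ via the chain of isolated subgroups, by Theorem \ref{Ordered Groups}), we have $a \in L^n_+$. Now I apply Lemma \ref{Cones containing finite sets of positive elements} to the singleton $K = \{a\}$ to obtain some $\hat{C} \in \hat{\cal C}$ with $a \in \hat{C}$. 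Setting $C := \hat{C} \cap L$, we have $C \in {\cal C}$ and $a \in C$, as desired.

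The only step that requires a moment of care is the inclusion $H^k_+ \subset L^n_+$, which I would justify by noting that the ordering on $L^k$ is simply the restriction of the ordering on $L^n$ (both are lexicographic with respect to compatible bases coming from the flag of isolated subgroups, cf.\ Theorem \ref{Ordered Groups}). Everything else is formal bookkeeping, so there is no real obstacle; the corollary is essentially a direct reading of the preceding lemma.
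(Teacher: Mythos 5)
Your proof is correct and is exactly the argument the paper leaves implicit: this corollary is stated without proof, immediately after Lemma \ref{Cones containing finite sets of positive elements}, and the intended route is precisely to apply that lemma to the singleton $\{a\}$ after observing $L \setminus \{0\} \subset L^n_+$. The two small points you flag as needing care --- the $a=0$ case, and the inclusion $H^k_+ \subset L^n_+$ via the restriction of the order on $L^n$ to $L^k$ --- are both handled correctly. (One could also cite the immediately preceding corollary about finitely generated subsemigroups of $L$, applied to the semigroup generated by $a$, but that is the same computation one layer removed.)
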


\begin{cor}
Both $\hat{\cal C}$ and ${\cal C}$ are injective systems under inclusion.
\end{cor}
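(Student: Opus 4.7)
To show that $\hat{\cal C}$ and ${\cal C}$ are injective (directed) systems under inclusion, I need to verify that given any two members of the collection, there exists a third member containing both. The previous lemma (Lemma \ref{Cones containing finite sets of positive elements}) already does essentially all the work, so the argument is a short reduction.

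First I would handle $\hat{\cal C}.$ Let $\hat{C}_1,\hat{C}_2\in \hat{\cal C}$ with standard generators $(x_1,\dots,x_n)$ and $(y_1,\dots,y_n)$ respectively. Since the standard generators of a cone in $\hat{\cal C}$ lie in the sets $\hat{L}^k_+,$ the finite set $K=\{x_1,\dots,x_n,y_1,\dots,y_n\}$ is contained in $L^n_+.$ By Lemma \ref{Cones containing finite sets of positive elements} there exists $\hat{C}_3\in\hat{\cal C}$ with $K\subset \hat{C}_3.$ Because $\hat{C}_3$ is a semigroup (a cone) and contains all generators of $\hat{C}_1$ and $\hat{C}_2,$ it contains the full cones $\hat{C}_1$ and $\hat{C}_2.$ This gives the required common upper bound in $\hat{\cal C}.$

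For ${\cal C},$ let $C_1=\hat{C}_1\cap L$ and $C_2=\hat{C}_2\cap L$ be two elements. Applying the previous paragraph to $\hat{C}_1$ and $\hat{C}_2$ yields $\hat{C}_3\in\hat{\cal C}$ with $\hat{C}_1\cup\hat{C}_2\subset\hat{C}_3.$ Setting $C_3=\hat{C}_3\cap L\in{\cal C}$ and intersecting the inclusions with $L$ gives $C_1\cup C_2\subset C_3,$ as desired.

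The main step is really the construction inside Lemma \ref{Cones containing finite sets of positive elements}; here there is no genuine obstacle, only the bookkeeping to confirm that the generators of the two given cones form a finite set of positive elements to which that lemma applies and that passing from $\hat{\cal C}$ to ${\cal C}$ is compatible with intersection with $L.$
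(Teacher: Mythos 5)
Your argument is correct and matches the paper's intended (unstated) proof: the corollary is placed immediately after Lemma \ref{Cones containing finite sets of positive elements} and the preceding corollaries precisely so that directedness of $\hat{\cal C}$ follows by applying that lemma to the union of the generating sets of two cones, with directedness of ${\cal C}$ then obtained by intersecting with $L.$ The only small point worth being explicit about is why the standard generators are positive elements of $L^n$ (they lie in the upper half-spaces $\hat L^k_+\subset L^n_+$), which you do note, so nothing is missing.
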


Although the elements of ${\cal C}$ are not cones in the usual sense, we still call them cones. If
$C\in {\cal C}$ then the corresponding element $\hat{C}\in \hat{\cal C}$ is called the
normalization of $C.$

There is the correspondence between the elements of ${\cal C}$ and non-normal affine toric
varieties $T_C.$ (One can construct the variety $T_C$ as follows: semigroup algebra of $C$ is
finitely generated and therefore correspond to the affine variety which we denote by $T_C.$) We
denote the set of toric varieties $T_C$ for all $C\in {\cal C}$ by ${\cal T}.$

Any $C\in {\cal C}$ has exactly one $k$-dimensional face which span $L_k,$ for any $k=0,\dots,n.$
Therefore, one can identify one of the $k$-dimensional orbits in all $T\in {\cal T}$ for each
$k=0,\dots,n.$ We denote these orbits $T^0,T^1,\dots,T^n.$ So, $T^i\subset T$ for all $T\in{\cal
T}$ and $i=0,\dots,n.$ Note also, that $T^k\cup T^{k-1}$ is an affine subvariety in every $T\in
{\cal T}$ for $i=1,\dots,n.$ However, any union of more than two consequent orbits is not a
subvariety.

Let $C,C'\in {\cal C}$ and $C'\subset C.$ Then there is the natural map $\phi_{C,C'}:T_C\to
T_{C'}.$ Note, that $\phi_{C,C'}$ is identity on $T^0,T^1,\dots,T^n.$ Since the maps $\phi_{C,C'}$
goes in the back direction, ${\cal T}$ is a projective system with respect to these maps.

Take $T_C\in {\cal T}.$ Although it is not normal, its normalization $\widetilde{T}_C$ is
isomorphic to $\mathbb C^n.$ The standard generators $(x_1,\dots,x_n)$ of the cone $\hat{C}$ give
coordinates on $\widetilde{T}.$ We'll use $(x_1,\dots,x_n)$ as coordinates for $T$ as well
(although some points are going to be glued together). We call these coordinates the {\it standard
coordinates} for the varieties $T\in {\cal T}.$

Switching from $T_C$ to $T_{C'}$ corresponds to the change of coordinates

$$
\begin{array}{l}
x'_1=x_1 \\
x'_2=x_1^{d_{12}}x_2 \\
\vdots \\
x'_n=x_1^{d_{1n}}\dots x_{n-1}^{d_{(n-1)n}}x_n,
\end{array}
$$

\noindent where

$$
\left(
\begin{array}{lllll}
1      & d_{12} & d_{13} & \dots & d_{1n} \\
0      & 1      & d_{23} & \dots & d_{2n} \\
\vdots & \vdots & \vdots &       & \vdots \\
0      & 0      & 0      & \dots & 1
\end{array}
\right)
$$

\noindent is the transition matrix from the standard generators of the cone $\hat{C}$ to the
standard generators of the cone $\hat{C}'.$

The following lemma is important in the study of the Laurent series of $L:$

\begin{lem}
Let $\hat{C}'\in \hat{\cal C}$ and $S\subset\hat{C}',$ $S\neq\emptyset.$ Than $S$ contains its
minimal element $s_{min}$ and there exist another cone $\hat{C}\in\hat{\cal C}$ such that $S\subset
\hat{C}+s_{min}$.
\end{lem}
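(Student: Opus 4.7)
The plan is twofold: first establish the existence of $s_{min}$, then exhibit an explicit cone $\hat{C}\in\hat{\cal C}$ with $S\subset s_{min}\cdot\hat{C}$ (the multiplicative form of the stated inclusion).

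For the existence part, I would fix the standard generators $(x_1,\dots,x_n)$ of $\hat{C}'$ and apply Theorem \ref{Ordered Groups} to the basis $(x_n,x_{n-1},\dots,x_1)$, which refines the chain of isolated subgroups $\hat{L}^n\supset\dots\supset\hat{L}^1$. This identifies the restriction of the order of $L^n$ to $\hat{C}'$ with the lexicographic order on exponent tuples $(m_n,\dots,m_1)\in\mathbb{N}^n$. Since lexicographic order on $\mathbb{N}^n$ is a well-order (induct on $n$: minimize $m_n$ over $S$, restrict to that value, recurse), $S$ attains its minimum at some $s_{min}=x_1^{a_1}\cdots x_n^{a_n}$.

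For the cone, I would set $y_1:=x_1$ and $y_j:=x_1^{-a_1}\cdots x_{j-1}^{-a_{j-1}}x_j$ for $j\ge 2$, and take $\hat{C}$ to be the simple cone generated by $(y_1,\dots,y_n)$. The transition matrix from $(x_i)$ to $(y_j)$ is upper unitriangular, hence integrally invertible, and $y_j\in\hat{L}^j\setminus\hat{L}^{j-1}$ because its $x_j$-coordinate in the basis $(x_1,\dots,x_j)$ of $\hat{L}^j$ equals $1$; so $\hat{C}\in\hat{\cal C}$ with standard generators $(y_1,\dots,y_n)$. Given $s=x_1^{m_1}\cdots x_n^{m_n}\in S$ with $s\ne s_{min}$, let $j$ be the largest index with $m_j\ne a_j$; by the lex order $m_j>a_j$ and $m_i=a_i$ for $i>j$, so $s/s_{min}=x_1^{m_1-a_1}\cdots x_j^{m_j-a_j}$. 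Expanding each $x_i=y_1^{c_{i,1}}\cdots y_{i-1}^{c_{i,i-1}}y_i$ using the inverted relation $x_{i+1}=x_1^{a_1}\cdots x_i^{a_i}y_{i+1}$ rewrites $s/s_{min}$ as a monomial in the $y$'s; an induction gives non-negative coefficients $c_{i,k}$ satisfying the recursion $c_{j,k}=a_k+\sum_{k<\ell<j}a_\ell c_{\ell,k}$.

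The technical heart, and the only non-bookkeeping step, is verifying that every $y_k$-exponent in $s/s_{min}$ is non-negative. Using $m_i-a_i\ge -a_i$ (from $m_i\ge 0$) for $i<j$ and $m_j-a_j\ge 1$, the exponent $\sum_{k\le i\le j}(m_i-a_i)c_{i,k}$ is bounded below by $-a_k-\sum_{k<i<j}a_ic_{i,k}+c_{j,k}$, which vanishes exactly by the recursion above. Thus $s/s_{min}\in\hat{C}$, completing the proof. The telescoping cancellation in this last bound is the one place where the choice of $y_j$ is really being used; the rest — existence of $s_{min}$, membership $\hat{C}\in\hat{\cal C}$ — is straightforward.
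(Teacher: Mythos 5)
Your proof is correct and constructs exactly the same cone as the paper, with the same lexicographic argument for $s_{\min}$, so in substance this is the same approach. The only divergence is in the verification that $s/s_{\min}\in\hat C$: you expand $s/s_{\min}$ directly in the $y$-basis and chase coefficients through the telescoping recursion $c_{j,k}=a_k+\sum_{k<\ell<j}a_\ell c_{\ell,k}$, which is correct but is the computationally heaviest part of your write-up. The paper reaches the same conclusion with a shorter observation: since $s_{\min}\in\hat C'$ implies all $a_i\ge 0$, each $x_i=x_1^{a_1}\cdots x_{i-1}^{a_{i-1}}y_i$ lies in $\hat C$ by induction, and then
$$s/s_{\min}=y_j\cdot x_1^{m_1}\cdots x_{j-1}^{m_{j-1}}x_j^{\,m_j-a_j-1}$$
is a product of elements of $\hat C$ with non-negative exponents, hence in $\hat C$. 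Your telescoping bound encodes precisely this factorization, so nothing is wrong --- but the factorization both avoids carrying the $c_{i,k}$ through the argument and makes the role of $s_{\min}\in\hat C'$ (i.e.\ $a_i\ge 0$) visible at a glance, which is worth internalizing.
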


\begin{proof}\label{Minimal element}
Let $(x_1,\dots,x_n)$ be the standard generators of $\hat{C}'.$ Note, that according to the Theorem
\ref{Ordered Groups} the order in $L_n$ coincides with the lexicographic order with respect to the
basis $(x_1,\dots,x_n)$ (first, with respect to the power of $x_n,$ then the power of $x_{n-1},$
etc.). Let $m_n$ be the smallest integer such that $x_1^{k_1}\dots x_{n-1}^{k_{n-1}}x_n^{m_n}\in S$
for some integers $k_1,\dots,k_{n-1}.$ Let $m_{n-1}$ be the smallest integer such that
$x_1^{k_1}\dots x_{n-2}^{k_{n-2}}x_{n-1}^{m_{n-1}}x_n^{m_n}\in S$ for some integers
$k_1,\dots,k_{n-2}.$ Continuing in the same way we get the integers $m_1,\dots,m_n$ such that
$s_{min}=x_1^{m_1}\dots x_n^{m_n}\in S$ is the minimal element in $S.$

Let $S'=\{a-s_{min}: a\in\hat{C}'$ and $a>s_{min}\}.$ Clearly, $S\subset S'+s_{min}.$ Therefore, it
is enough to find $\hat{C}\in\hat{\cal C}$ such that $S'\subset \hat{C}.$

Let $y_1=x_1,$ $y_2=x_1^{-m_1}x_2,$ $\dots,$ $y_n=x_1^{-m_1}\dots x_{n-1}^{-m_{n-1}}x_n.$ Easy to
see that $y_k\in \hat{L}^k_+$ for $k=1,\dots,n.$ Moreover, the transition matrix from
$(x_1,\dots,x_n)$ to $(y_1,\dots,y_n)$ is upper triangular with units on diagonal. Therefore, the
cone generated by $(y_1,\dots, y_n)$ is simple and belong to $\hat{\cal C}$. Denote this cone by
$\hat{C}.$ Let's prove that $S'\subset\hat{C}.$ Indeed, let $s=x_1^{l_1}\dots x_n^{l_n}\in S'.$ It
follows from the definition of $S'$, that $l_k\ge -m_k$ for $k=1,\dots,n$ and that the last
non-zero power, say $l_p,$ is positive (otherwise $s<0$). Then $s=y_p(x_1^{l_1+m_1}\dots
x_{p-1}^{l_{p-1}+m_{p-1}}x_p^{l_p-1}).$ Note, that all the powers in the last formula are
non-negative and that $x_k\in \hat{C}$ for all $k=1,\dots n.$ Therefore, $s\in\hat{C}.$
\end{proof}

\subsection{Laurent Power Series.}

\begin{df}
Let $F(L)^*$ be the set of all formal infinite linear combinations $f=\sum\limits_{p\in L^n}f_p p$
of the elements of $L^n$ such that
\begin{enumerate}
\item There exist at least one point in the torus $T^n$ such that $f$ is convergent at this point.
\item There exist a cone $\hat{C}\in {\cal \hat{C}}$ and an element $p_0\in L^n$ such that the Newton polyhedron of $f$ is a subset of $\hat{C}+p_0.$
\end{enumerate}
Let $F(L)=F(L)^*\sqcup \{0\}$ (i.e. we add the series with zero coefficients to $F(L)^*$). We call
$F(L)$ the set of Laurent series of $L.$
\end{df}

\begin{lem}\label{valuation}
Let $f\in F(L)^*.$ Then one can choose the cone $\hat{C}$ and an element $p_0\in L^n$ in such a way
that the Newton polyhedron of $f$ is a subset of $\hat{C}+p_0$ and $f_{p_0}\neq 0.$
\end{lem}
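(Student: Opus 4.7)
The plan is to reduce the statement directly to Lemma \ref{Minimal element} by a translation. Let $S\subset L^n$ denote the support of $f,$ i.e.\ the set of $p$ with $f_p\neq 0.$ By the definition of $F(L)^*,$ there exist $\hat{C}_0\in\hat{\cal C}$ and $p'_0\in L^n$ such that $S\subset \hat{C}_0+p'_0,$ and $S\neq\emptyset$ since $f\in F(L)^*.$

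First I would translate: the set $S':=S-p'_0$ is a non-empty subset of $\hat{C}_0.$ Then I would apply Lemma \ref{Minimal element} to $S'\subset\hat{C}_0,$ which guarantees both a minimal element $s_{\min}\in S'$ and the existence of a cone $\hat{C}\in\hat{\cal C}$ with $S'\subset \hat{C}+s_{\min}.$ Translating back by $p'_0$ and setting $p_0:=s_{\min}+p'_0,$ I get $S\subset \hat{C}+p_0,$ which bounds the Newton polyhedron as required; and since $s_{\min}\in S',$ we have $p_0\in S,$ so $f_{p_0}\neq 0.$

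There is essentially no obstacle here; the content of the lemma is the existence of a minimum of the support together with a cone anchored at that minimum, and both are already packaged in Lemma \ref{Minimal element}. The only thing worth verifying carefully is that translation of the Newton polyhedron preserves the hypothesis of the earlier lemma, which is immediate because $\hat{C}_0$ is translation invariant in the sense needed (we just shift the set $S$, not the cone). No convergence issues intervene, since the first condition in the definition of $F(L)^*$ is preserved trivially under a change of the reference point $p_0.$
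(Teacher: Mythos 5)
Your proof is correct and takes essentially the same route as the paper: both reduce to Lemma \ref{Minimal element}. In fact your version is slightly more careful. The paper's proof asserts ``$Sup_f\subset \hat{C}'\in\hat{\cal C}$'' directly, but the definition of $F(L)^*$ only guarantees that the support lies in a \emph{translated} cone $\hat{C}'+p'_0$; the translation step that you make explicit (passing to $S'=S-p'_0$, applying the lemma, and then shifting back so that $p_0=s_{\min}+p'_0$) is exactly what is needed to make that line rigorous, and it also immediately yields $p_0\in Sup_f$, i.e.\ $f_{p_0}\neq 0$. So you have filled in the small implicit normalization that the paper glosses over; no other difference in method.
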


\begin{proof}
Let $Sup_f=\{p\in L^n: f_p\neq 0\}.$ Since $f\in F(L),$ $Sup_f\subset \hat{C}'\in\hat{\cal C}.$ Now
we just apply the Lemma \ref{Minimal element} to $Sup_f$ to get $p_0$ and $\hat{C}.$
\end{proof}

Note, that the $p_0\in L^n$ from Lemma \ref{valuation} is unique for each $f\in F(L)^*.$ Basically,
it is the smallest element in $L^n$ such that the corresponding coefficient of $f$ is not zero.
Therefore, one gets a map $\nu:F(L)^*\to L^n.$

\begin{lem}\label{Existance of a common cone for several functions}
Let $f_1,\dots,f_m\in F(L).$ Then there exist $T\in {\cal T}$ such that $\frac{f_i}{\nu(f_i)}$
converge to holomorphic non-zero functions in a neighborhood of the origin in the normalization
$\widetilde{T}$ of $T$ for $i=1,\dots,m$ (here $(x_1,\dots,x_n)$ are the standard coordinates on
$T$).
\end{lem}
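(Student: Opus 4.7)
The plan is to reduce everything to a statement about ordinary convergent power series on $\widetilde{T}\cong\mathbb{C}^n$, once a single cone $\hat{C}\in\hat{\cal C}$ large enough to accommodate all the Newton polyhedra has been chosen (I assume throughout that each $f_i\in F(L)^*$, since otherwise $\nu(f_i)$ is not defined). First I would apply Lemma \ref{valuation} to each $f_i$ separately, obtaining a cone $\hat{C}_i\in\hat{\cal C}$ such that the Newton polyhedron of $f_i$ lies in $\hat{C}_i+\nu(f_i)$ with $f_{i,\nu(f_i)}\neq 0$. Dividing gives $g_i:=f_i/\nu(f_i)$ with Newton polyhedron inside $\hat{C}_i$ and a non-zero constant term.

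Next I would unify the $\hat{C}_i$. Let $K\subset L^n_+$ be the finite set consisting of the standard generators of all the $\hat{C}_i$. Lemma \ref{Cones containing finite sets of positive elements} produces $\hat{C}\in\hat{\cal C}$ with $K\subset\hat{C}$, and simplicity of $\hat{C}$ forces $\hat{C}_i\subset\hat{C}$ for every $i$. Taking $C=\hat{C}\cap L$ and $T=T_C$, in the standard coordinates $(x_1,\dots,x_n)$ on $\widetilde{T}\cong\mathbb{C}^n$ each $g_i$ becomes a formal power series $\sum_d a^{(i)}_d x^d$ with $a^{(i)}_0\neq 0$, the exponents ranging over the positive orthant since $\hat{C}$ is generated by $(x_1,\dots,x_n)$.

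Finally I would address convergence. For each $i$, the definition of $F(L)$ supplies a point $q_i\in T^n$ at which $f_i$ converges; under the identification of $T^n$ with the open dense torus of $\widetilde{T}$ its coordinates $\xi_k^{(i)}$ are all non-zero. Convergence of $g_i$ at $q_i$ bounds the terms $|a^{(i)}_d||\xi^{(i)}|^d$, so the standard majorant argument (comparison with a product of geometric series) gives absolute convergence of $g_i$ on the polydisc $\{|y_k|<|\xi_k^{(i)}|\}$, which is a neighborhood of the origin. Intersecting the $m$ polydiscs produces one common neighborhood of $0\in\widetilde{T}$ on which all $g_i$ are simultaneously holomorphic, and since $g_i(0)=a^{(i)}_0\neq 0$, continuity makes them non-vanishing on a smaller neighborhood. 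The only real obstacle is the combinatorial step of merging the $\hat{C}_i$ into a common $\hat{C}$, which is precisely the content of Lemma \ref{Cones containing finite sets of positive elements} and its corollaries; everything after is routine Reinhardt-domain complex analysis.
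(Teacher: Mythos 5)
Your proof is correct and follows essentially the same route as the paper's: apply Lemma \ref{valuation} to each $f_i$, merge the resulting cones $\hat{C}_i$ into a single $\hat{C}\in\hat{\cal C}$ via Lemma \ref{Cones containing finite sets of positive elements}, and then observe that in the standard coordinates on $T_{\hat{C}\cap L}$ each $f_i/\nu(f_i)$ is a Taylor series with non-zero constant term which converges at a torus point and hence on a polydisc around the origin. You spell out the Abel/majorant step and the reduction of $\hat{C}_i\subset\hat{C}$ to containment of generators a bit more explicitly, but this is the same argument.
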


\begin{proof}
Let $\hat{C}_i\in {\cal \hat{C}}$ be such that the Newton polyhedron of $f_i$ is a subset of
$\hat{C}_i+\nu(f_i).$ Let $\hat{C}\in \hat{\cal C}$ be such that $\hat{C}_i\subset \hat{C}$ for all
$i=1,\dots,m$ (the existence of $\hat{C}$ follows from the Lemma \ref{Cones containing finite sets
of positive elements}). Let $C=\hat{C}\cap L,$ $T=T_C,$ and $(x_1,\dots,x_n)$ be the standard
coordinates in $T.$ Then, for any $i=1,\dots,m,$ $\frac{f_i}{\nu(f_i)}$ is a Taylor series in
$(x_1,\dots,x_n)$ converging at least at one point in $T^n\subset T$ and, therefore, converging in
a neighborhood of zero in $\widetilde{T}.$ Moreover, since the coefficient of $f_i$ at $\nu(f_i)$
is not zero, $f_i$ is not zero at the origin.
\end{proof}

{\bf Remark.} Note, that $\nu(f_i)$'s are monomials in $(x_1,\dots,x_n).$ So, $f_1,\dots,f_m$ are
{\it almost monomial} in the neighborhood of the origin in $\widetilde{T},$ i.e. each of them is
equal to a monomial multiplied by a non-zero holomorphic function.

\begin{te}\label{F(L) is a Field}
$F(L)$ is a field and $\nu:F(L)^*\to L^n$ is a homomorphism of the multiplicative group $F(L)^*$ of
$F(L)$ to the ordered abelian group $L^n.$
\end{te}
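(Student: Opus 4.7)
\emph{Plan:} My strategy is to reduce the field axioms and the homomorphism property of $\nu$ to standard facts about convergent Taylor series, leveraging Lemma \ref{Existance of a common cone for several functions} as the central technical device. That lemma says that any finite family $\{f_1,\dots,f_m\}\subset F(L)^*$ can be ``straightened'' simultaneously: there is a toric variety $T\in{\cal T}$ with standard coordinates $(x_1,\dots,x_n)$ so that each $f_i$ factors as $f_i=\nu(f_i)\,g_i$ with $g_i$ a convergent Taylor series in $(x_1,\dots,x_n)$ and $g_i(0)\neq 0$.

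For the product $f_1 f_2$ of two elements of $F(L)^*$, I would apply this lemma to $\{f_1,f_2\}$ to obtain a common cone $\hat{C}$ and standard coordinates, and then write $f_1 f_2=\nu(f_1)\nu(f_2)\cdot g_1 g_2$. The Cauchy product $g_1 g_2$ is again a convergent Taylor series in $(x_1,\dots,x_n)$, and each of its coefficients is a finite sum because in standard coordinates $\hat{C}$ corresponds to the non-negative orthant $\mathbb{N}^n$. This shows $f_1 f_2\in F(L)^*$ with Newton polyhedron inside $\hat{C}+\nu(f_1)\nu(f_2)$. Moreover $(g_1 g_2)(0)=g_1(0)g_2(0)\neq 0$, and combining this with Lemma \ref{valuation} yields $\nu(f_1 f_2)=\nu(f_1)\nu(f_2)$, which is the homomorphism property of $\nu$.

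For multiplicative inverses, applying the lemma to a single $f\in F(L)^*$ gives $f=\nu(f)g$ with $g$ convergent and $g(0)\neq 0$. Then $1/g$ is itself a convergent Taylor series (classical), so $f^{-1}=\nu(f)^{-1}(1/g)\in F(L)^*$, with Newton polyhedron in $\hat{C}+\nu(f)^{-1}$ and $\nu(f^{-1})=\nu(f)^{-1}$. For additive closure, given $f_1,f_2\in F(L)^*$ I would first find a common cone $\hat{C}$ containing both Newton polyhedra (using Lemma \ref{Existance of a common cone for several functions} on $\{f_1,f_2\}$), then enlarge $\hat{C}$ via Lemma \ref{Cones containing finite sets of positive elements} so that it also contains whichever of $\nu(f_1)\nu(f_2)^{-1}$ or $\nu(f_2)\nu(f_1)^{-1}$ is positive (trivially if $\nu(f_1)=\nu(f_2)$); after this enlargement both Newton polyhedra lie in a common shift $\hat{C}+p_0$, so the Newton polyhedron of $f_1+f_2$ does as well. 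Convergence at a point of $T^n$ near the origin is immediate, and if $f_1+f_2=0$ it belongs to $F(L)$ by definition.

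The main obstacle I anticipate is checking that the Cauchy product defining $f_1 f_2$ is actually well-defined as a formal sum, i.e., that each coefficient is a finite sum. The reduction to Taylor series in the non-negative orthant $\mathbb{N}^n$ handles this cleanly, since there are only finitely many pairs of non-negative lattice points summing to a given one. Once this finiteness is in hand, everything else reduces to routine manipulation of convergent power series, with the common toric variety playing the role of a ``coordinate neighborhood'' in which all algebraic operations are transparent.
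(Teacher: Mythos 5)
Your proposal is correct and matches the paper's strategy: both rest on Lemma \ref{Existance of a common cone for several functions} to pass to standard coordinates on a common toric variety, where product, sum, and inverse become manipulations of convergent Taylor series not vanishing at the origin, from which $\nu(fg)=\nu(f)\nu(g)$ and $\nu(f^{-1})=\nu(f)^{-1}$ fall out at once. The only cosmetic difference is in the addition step, where the paper simply divides $f+g$ by the componentwise-minimum monomial $x_1^{\min(k_1,m_1)}\dots x_n^{\min(k_n,m_n)}$ to put both Newton polyhedra in the same shifted cone, avoiding the extra cone enlargement you perform via Lemma \ref{Cones containing finite sets of positive elements}.
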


\begin{proof}
The theorem follows immediately from the previous lemma. Indeed, if $f,g\in F^*(L)$ then there
exist $T\in {\cal T}$ with coordinates $(x_1,\dots,x_n)$ such that both $\frac{f}{\nu(f)}$ and
$\frac{g}{\nu(g)}$ converge to holomorphic non-zero functions in a neighborhood of the origin in
$\widetilde{T}.$ Let $\nu(f)=x_1^{k_1}\dots x_n^{k_n}$ and $\nu(g)=x_1^{m_1}\dots x_n^{m_n}.$ Then
$\frac{fg}{\nu(f)\nu(g)},$ $\frac{f+g}{x_1^{\min(k_1,m_1)}\dots x_n^{\min(k_n,m_n)}},$ and
$\frac{f^{-1}}{\nu(f)^{-1}}$ also converge to holomorphic functions in a neighborhood of the
origin. Moreover, $\frac{fg}{\nu(f)\nu(g)}$ and $\frac{f^{-1}}{\nu(f)^{-1}}$ don't vanish at the
origin. Therefore, $fg,f^{-1},f+g\in F(L),$ $\nu(fg)=\nu(f)\nu(g),$ and $\nu(f^{-1})=\nu(f)^{-1}.$
\end{proof}

\begin{df}
A homomorphism of the multiplicative group of a field $F$ to an ordered abelian group $G$ is called
a valuation on $F.$
\end{df}

According to the Theorem \ref{F(L) is a Field}, the field $F(L)$ is endowed with a valuation.

\begin{lem}\label{expansions of rational functions}\
\begin{enumerate}
\item Let $r$ be a meromorphic function on $T^n$ (which is the same as a meromorphic function on any $T\in {\cal T}$). Then there exist a Laurent series $f\in F(L)$ normally converging to $r$ in $\{0<|x_1|<\epsilon_1,0<|x_2|<\epsilon_2,\dots,0<|x_n|<\epsilon_n\}\subset T^n,$ where $(x_1,\dots,x_n)$ are the standard coordinates on one of the varieties $T\in {\cal T}.$
\item Let $T\in{\cal T}.$ Let $r$ be a meromorphic function on a neighborhood of the origin in $T.$ Then there exist a Laurent series $f\in F(L)$ normally converging to $r$ in $\{0<|x_1|<\epsilon_1,0<|x_2|<\epsilon_2,\dots,0<|x_n|<\epsilon_n\}\subset T^n,$ where $(x_1,\dots,x_n)$ are the standard coordinates on one of the varieties $T'\in {\cal T}.$
\end{enumerate}
\end{lem}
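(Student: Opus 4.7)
My plan is to prove both parts by writing $r$ as a ratio $P/Q$ and inverting the denominator as a geometric series, with the combinatorial control coming from Lemma \ref{Existance of a common cone for several functions}.

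For part (1), I would use that any meromorphic function on the algebraic torus $T^n$ is a quotient $r = P/Q$ of non-zero Laurent polynomials in the characters of $T^n$, and observe that any such Laurent polynomial belongs to $F(L)$: its support is a finite subset of $L^n$, which after multiplication by a sufficiently small element of $L^n$ lands in $L^n_+$ and is contained in some cone of $\hat{\mathcal{C}}$ by the corollary of Lemma \ref{Cones containing finite sets of positive elements}; condition (1) is trivial. Applying Lemma \ref{Existance of a common cone for several functions} to $\{P, Q\}$ would produce a variety $T' \in \mathcal{T}$ with standard coordinates $(x_1, \dots, x_n)$ in which both $P/\nu(P)$ and $Q/\nu(Q)$ are non-zero holomorphic functions at the origin of the normalization $\widetilde{T'}$. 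Writing $Q/\nu(Q) = q_0 + \tilde{Q}$ with $q_0 \neq 0$ and $\tilde{Q}$ a convergent Taylor series vanishing at the origin, the geometric inversion
$$
\frac{1}{Q/\nu(Q)} = q_0^{-1} \sum_{k \geq 0} \bigl(-q_0^{-1}\tilde{Q}\bigr)^k
$$
would yield
$$
f = \frac{\nu(P)}{\nu(Q)} \cdot \frac{P}{\nu(P)} \cdot q_0^{-1} \sum_{k \geq 0} \bigl(-q_0^{-1}\tilde{Q}\bigr)^k,
$$
which I claim represents $r$ as an element of $F(L)$.

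Two things need to be checked. Well-definedness of the formal series is automatic: $\tilde{Q}^k$ has monomials of total degree $\geq k$ in $(x_1, \dots, x_n)$, so only finitely many $k$ contribute to any given monomial. The Newton polyhedron of $f$ sits inside the cone of $T'$ shifted by $\nu(P)\nu(Q)^{-1}$, verifying condition (2). For $|x_i|$ small enough $|q_0^{-1}\tilde{Q}| < 1$ holds, so by the Weierstrass majorant test the series converges normally to $r$ on some set $\{0 < |x_i| < \epsilon_i\} \subset T^n$, verifying condition (1).

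For part (2), a meromorphic function on a neighborhood of the origin in $T$ lifts through the normalization $\widetilde{T} \cong \mathbb{C}^n$ to a quotient $r = P/Q$ of convergent power series in the standard coordinates of $T$. Such power series also belong to $F(L)$: their (possibly infinite) supports lie in the cone $\hat{C}_T$ associated to $T$, and they converge at points of $T^n$ near the origin. Hence Lemma \ref{Existance of a common cone for several functions} applies again and produces a (possibly different) $T' \in \mathcal{T}$ in which the argument of part (1) goes through verbatim. The one point worth flagging is that $T'$ is generally not the original $T$: the common-cone lemma typically strictly enlarges $\hat{C}_T$, which is why the statement is phrased to allow exactly this change of coordinate chart. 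No additional analytic difficulty arises, since on a sufficiently small polydisk in $\widetilde{T'}$ the modulus $|q_0^{-1}\tilde{Q}|$ is again less than one.
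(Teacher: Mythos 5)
Your proof is correct and follows the same essential route as the paper, which simply cites Theorem \ref{F(L) is a Field} together with the observation that Taylor series of functions holomorphic at the origin of $\widetilde{T}$ lie in $F(L)$; you are unrolling the inversion step of that theorem (writing $r=P/Q$, finding a common cone via Lemma \ref{Existance of a common cone for several functions}, and inverting $Q/\nu(Q)$ by a geometric series, which is exactly the mechanism hiding behind the paper's appeal to the field axioms). The extra detail you supply---the finite-support argument for Laurent polynomials, the total-degree grading that makes the geometric series well defined, and the Weierstrass majorant estimate for normal convergence---is all sound and is what the paper's terse ``follows immediately'' leaves to the reader.
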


\begin{proof}
Both statements immediately follows from the Theorem \ref{F(L) is a Field}. Indeed, the Taylor
expansions of functions holomorphic at the origin in the normalization of any $T\in{\cal T}$ belong
to $F(L)$ and $F(L)$ is closed under division by non-zero elements.
\end{proof}

$F(L)$ is basically the field of functions, which are meromorphic in a "good"\ neighborhood of the
flag $T^0,T^1,\dots,T^n.$ Note, that $F(L)$ and the valuation on it depend only on $L^n,$ and don't
depend on other elements of the flag $L^0\subset L^1\subset\dots\subset L^n.$ However, if one wants
to consider the subalgebra in $F(L)$ of functions which are holomorphic in a "good"\ neighborhood
of the flag $T^0,T^1,\dots,T^n,$ it will depend on the whole flag $L^0\subset
L^1\subset\dots\subset L^n$ (or, equivalently, on the semigroup $L$).

\begin{df}
Let $O(L)\subset F(L)$ be the subset of all series $f=\sum\limits_{p\in L^n} f_p p$ in $F(L)$ such
that $f_p\neq 0 \Rightarrow p\in L.$
\end{df}

{\bf Remark.} Let $f\in F(L)$ and $f=\nu(f)\phi(x_1,\dots,x_n)$ where $(x_1,\dots,x_n)$ are the
coordinates on the appropriate $T\in {\cal T}$ and $\phi$ is a holomorphic function in the
neighborhood of the origin in the normalization $\widetilde{T}\simeq\mathbb C^n.$ Then the series
of $f$ belong to $O(L)$ if and only if $\nu(f)\ge 0$ and $\phi$ can be pushed forward to $T.$

\begin{lem}
$O(L)\subset F(L)$ is a subalgebra.
\end{lem}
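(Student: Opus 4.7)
The statement reduces to two closure properties: $O(L)$ is closed under addition and under multiplication (it clearly contains the constants, since the identity $1 \in L^n$ lies in $H^0_+ \subset L$). The plan is to observe that membership in $F(L)$ is automatic from Theorem \ref{F(L) is a Field}, so the only real content is tracking the supports and using the fact---already noted in the paper---that $L = H^0_+ \cup \dots \cup H^n_+$ is a semigroup.

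First, I would recall from Theorem \ref{F(L) is a Field} that $F(L)$ is itself a field: for $f,g \in F(L)$, both $f+g$ and $fg$ automatically belong to $F(L)$, so in particular the convergence condition and the Newton-polyhedron condition in the definition of $F(L)^*$ are taken care of. This means the only thing left to check is that the supports $\mathrm{Sup}_{f+g}$ and $\mathrm{Sup}_{fg}$ remain inside $L$ whenever $\mathrm{Sup}_f, \mathrm{Sup}_g \subseteq L$.

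For addition this is immediate, since $\mathrm{Sup}_{f+g} \subseteq \mathrm{Sup}_f \cup \mathrm{Sup}_g \subseteq L$. For multiplication, I would write the coefficient of $r \in L^n$ in the product as $(fg)_r = \sum_{pq=r} f_p g_q$, which can be non-zero only when $r$ admits a factorization $r = pq$ with $p \in \mathrm{Sup}_f \subseteq L$ and $q \in \mathrm{Sup}_g \subseteq L$. Since $L$ is a semigroup under the multiplicative operation of $L^n$, such an $r$ lies in $L$, so $\mathrm{Sup}_{fg} \subseteq L$.

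There is no real obstacle here; the argument is essentially a bookkeeping check. The only conceptual point worth isolating is the semigroup property of $L$, which follows from the definition of the upper half-spaces $H^k_+$ (a product of elements from $H^i_+$ and $H^j_+$ with $i \le j$ lies in $L^j$ and, using the characterization that elements of $L^j_+ \setminus \hat{L}^{j-1}$ are strictly larger than every positive element of $\hat{L}^{j-1}$, cannot drop into $\hat{L}^{j-1}$ unless both factors already did). Once that is cited, the rest of the argument is two lines, as above.
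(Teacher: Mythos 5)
Your proof is correct and is essentially the detailed version of what the paper means by ``follows immediately from the definition'': the field structure of $F(L)$ handles convergence and the Newton-polyhedron condition, and closure of supports under addition and multiplication reduces to the containment $\mathrm{Sup}_f \cup \mathrm{Sup}_g \subseteq L$ and the semigroup property of $L$. Your justification of the semigroup property via the isolated-subgroup characterization is sound and simply spells out a fact the paper asserts without proof when it introduces $L$.
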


\begin{proof}
Follows immediately from the definition.
\end{proof}

{\bf Remark.} Let $C\in {\cal C}$ be a cone. Denote by $O(C)$ the ring of series of elements of
$C,$ converging in a neighborhood of the origin in $T_C.$ In other words, $O(C)$ is the ring of
germs of analytic functions on $T_C$ at the origin. Then $O(L)=\bigcup\limits_{C\in {\cal C}}O(C).$
Therefore, $O(L),$ in a sense, is the ring of germs of functions on the inverse limit of ${\cal T}$
at the origin.

\begin{lem}\label{almost normal}
Any $T\in {\cal T}$ satisfy the following continuation property. Let $U\subset T$ be any open
subset and $f:U\to\mathbb C$ be any continuous function, holomorphic in the complement to an
analytic subset $\Sigma\subset U$ of codimension $1.$ Then $f$ is holomorphic in $U.$
\end{lem}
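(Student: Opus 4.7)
I plan to lift $f$ to the normalization $\pi\colon\widetilde T\to T$, which is a finite surjection with $\widetilde T\simeq\mathbb C^n$, apply the classical Riemann extension theorem on $\mathbb C^n$, and then descend the result back to $T$. Setting $\widetilde U:=\pi^{-1}(U)$, $\widetilde\Sigma:=\pi^{-1}(\Sigma)$, and $\widetilde f:=f\circ\pi$, the finiteness of $\pi$ makes $\widetilde\Sigma$ an analytic subset of codimension $1$ in $\widetilde U$; the continuity of $f$ makes $\widetilde f$ continuous on $\widetilde U$; and the fact that $f$ is a local section of $\mathcal O_T$ on $U\setminus\Sigma$ makes $\widetilde f$ holomorphic on $\widetilde U\setminus\widetilde\Sigma$ as its pullback through a morphism.

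Next I would invoke the classical Riemann extension theorem on the smooth ambient space $\widetilde T\simeq\mathbb C^n$: a continuous function on an open subset of $\mathbb C^n$, holomorphic off a codim-$1$ analytic subset, is holomorphic on the whole open set. This immediately gives that $\widetilde f$ is holomorphic on $\widetilde U$.

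The final step is to descend $\widetilde f$ to a section of $\mathcal O_T$ on $U$. Because $\widetilde f=f\circ\pi$ and $f$ is a function on $U$, $\widetilde f$ is automatically constant on the fibres of $\pi$. It therefore suffices to show that for every $T\in\mathcal T$, any holomorphic function on $\widetilde T$ that is constant on fibres of $\pi$ lies in the image of the injection $\mathcal O_T\hookrightarrow\pi_*\mathcal O_{\widetilde T}$. Concretely, at a point $p\in U$ and at each preimage $\widetilde p\in\pi^{-1}(p)$, the Taylor expansion of $\widetilde f$ in the standard coordinates on $\widetilde T$, together with the fibre-constancy constraint, should force the support of the expansion to lie in $C$, which is exactly the defining condition for the germ to belong to $\mathcal O_{T,p}$.

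I expect this last descent to be the main obstacle; it is a local \emph{weak normality}-type statement about the toric varieties $T\in\mathcal T$, reflecting the fact that their non-normality is "numerical" (captured by the combinatorics of $C=\hat C\cap L\subset\hat C$) rather than "infinitesimal" (as it would be for a cusp). I would verify it orbit by orbit using the description of $C$ in terms of the flag $L^0\subset L^1\subset\cdots\subset L^n$ and the structure of the map $\pi$ along each orbit $T^k$. Once the weak-normality statement is in hand, $f\in\mathcal O_T(U)$ follows from the already-established holomorphicity of $\widetilde f$ on $\widetilde U$.
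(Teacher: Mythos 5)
Your plan is the same as the paper's: lift $f$ through the normalization $\pi\colon\widetilde T\simeq\mathbb C^n\to T$, apply the classical Riemann extension theorem there, and then push the resulting Taylor series back down by showing its support lies in $C$. The paper executes exactly these three moves, so your high-level route is right.

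There are two things you should fix. First, the paper begins with a reduction you omit: ``every point in $T$ has a neighborhood isomorphic to a neighborhood of the origin in a variety constructed in the same way as $T$, but for a different flag of lattices,'' which allows one to assume $U\ni 0$ and work only with the germ at the torus-fixed point. This is not cosmetic. Your statement that the support of the Taylor expansion of $\widetilde f$ at a preimage $\widetilde p$ of an arbitrary $p\in U$ ``should lie in $C$, which is exactly the defining condition for the germ to belong to $\mathcal O_{T,p}$'' is false for $p$ away from the origin: the support-in-$C$ description of $\mathcal O_{T,p}$ is special to the closed orbit. At a point of the $k$-dimensional orbit the local ring is different (and the standard Taylor coordinates centered at $\widetilde p$ are not the monomial coordinates governing $C$). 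The paper's reduction is precisely what makes ``support in $C$'' a meaningful criterion.

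Second, after the reduction the descent becomes: if $\widetilde f=\sum a_{\overline k}x^{\overline k}$ is holomorphic near $0$ and $f$ is continuous on $T$ (equivalently, since $T$ carries the quotient topology from $\pi$, $\widetilde f$ is constant on $\pi$-fibres), then $a_{\overline k}\neq0\Rightarrow x^{\overline k}\in C$. You correctly identify this as the crux and as a weak-normality-type statement about the non-normal toric varieties $T_C$, and you sketch the right strategy (use the fibre structure of $\pi$ over each orbit, which is governed by the finite quotients $\hat L^j/L^j$, to kill the forbidden coefficients orbit by orbit). But you leave it as a plan rather than a proof, so as written the argument is incomplete at exactly the step that needs a proof. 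The paper itself states this implication as a single sentence; if you want your proof to be self-contained you should spell out the orbit-by-orbit invariance argument, after first making the reduction to the origin.
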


\begin{proof}
It is enough to check that $f$ is holomorphic in a neighborhood of every point in $U.$ Moreover,
every point in $T$ has a neighborhood isomorphic to a neighborhood of the origin in a variety
constructed in the same way as $T,$ but for a different flag of lattices. Therefore, it is enough
consider the case when $U$ contains the origin and to check that $f$ is holomorphic in a
neighborhood of the origin.

Let $(x_1,\dots, x_n)$ be the standard coordinates on $T$ and $C\in {\cal C}$ be the corresponding
cone. Let $\pi:\widetilde{T}\to T$ be the normalization map. Then $\tilde{f}=f\circ \pi$ is
continuous function in a neighborhood of the origin in $\widetilde{T}\simeq\mathbb C^n$ and
holomorphic in the compliment to the analytic subset $\pi^{-1}(\Sigma)$ of codimension $1.$
Therefore, by Riemann Extension Theorem, it is holomorphic in this neighborhood and can be expanded
into the Taylor series $\tilde{f}=\sum a_{\overline{k}}x^{\overline{k}}$ in it.

Since $f$ is continuous in a neighborhood of the origin in $T,$ it follows that if
$a_{\overline{k}}\neq 0$ then $x^{\overline{k}}\in C.$ Therefore, $\sum
a_{\overline{k}}x^{\overline{k}}\in O(C)$ and $f$ is regular at the origin in $T.$
\end{proof}

\subsection{Changes of Variables.}

Let $(f_1,\dots,f_n)$ be an $n$-tuple of functions from $F(L)$ and
$x_1=\nu(f_1),\dots,x_n=\nu(f_n).$ Suppose that:

\begin{enumerate}
\item $(x_1,\dots,x_n)$ are the standard coordinates on a variety in ${\cal T};$
\item\label{generators of C_f belong to O(L)} for any integers $k_1,\dots,k_n$ such that $x_1^{k_1}\dots x_n^{k_n}\in L$ we have $f_1^{k_1}\dots f_n^{k_n}\in O(L).$
\end{enumerate}

Let $L_f^n$ be the lattice of monomials in $f_1,\dots,f_n.$ Easy to see that the restriction
$\phi:=\nu|_{L_f^n}:L_f^n\to L^n$ is an isomorphism. Consider the flag of lattices $L_f^0\subset
L_f^1\subset\dots\subset L_f^n,$ where $L_f^k=\phi^{-1}(L^k)$ for $k=0,\dots,n$ and the order on
$L_f^n$ is induced by $\phi.$ Denote by $L_f$ the corresponding semigroup, ${\cal C}_f$ the
corresponding system of cones and ${\cal T}_f$ the corresponding system of toric varieties.

\begin{te}\label{change of variables}
There exist a toric variety $T_f\in {\cal T}_f$ with the standard coordinates $(g_1,\dots,g_n),$
and a toric variety $T\in {\cal T}$ such that the $n$-tuple $(g_1,\dots,g_n)$ provides an analytic
isomorphism of a neighborhood of the origin in $T$ to a neighborhood of the origin in $T_f$ (note,
that $g_1,\dots,g_n$ are monomials in $f_1,\dots,f_n$).
\end{te}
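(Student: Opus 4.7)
My plan is to construct $T$ and $T_f$ together with the map $(g_1,\ldots,g_n)$ by combining Lemma \ref{Existance of a common cone for several functions} with the flag isomorphism $\nu$. Applying that lemma to $f_1,\ldots,f_n$, I obtain $T = T_{\hat C} \in \mathcal{T}$ with standard coordinates $(z_1,\ldots,z_n)$ (the generators of $\hat C$) such that on the normalization $\widetilde T \cong \mathbb{C}^n$ every ratio $f_i/x_i$ extends to a holomorphic unit at the origin. Using the flag-preserving isomorphism $\nu|_{L_f^n}\colon L_f^n \to L^n$, I define $g_j$ to be the unique monomial in $f_1,\ldots,f_n$ with $\nu(g_j) = z_j$; then $(g_1,\ldots,g_n)$ are the standard generators of a cone $\hat C_f \in \hat{\mathcal{C}}_f$, and I let $T_f := T_{\hat C_f}$. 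Writing $z_j = \prod_i x_i^{k_{ji}}$ gives
\[
g_j \;=\; \prod_i f_i^{k_{ji}} \;=\; z_j\cdot V_j, \qquad V_j := \prod_i (f_i/x_i)^{k_{ji}},
\]
with each $V_j$ a holomorphic unit near the origin (a product of units with integer exponents). Consequently $\Phi := (g_1,\ldots,g_n)\colon \widetilde T \to \widetilde T_f \cong \mathbb{C}^n$ has invertible Jacobian $\operatorname{diag}(V_1(0),\ldots,V_n(0))$ at the origin and is a local analytic isomorphism there.

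The crucial use of hypothesis 2 is to descend $\Phi$ to a holomorphic map $\bar\Phi\colon T \to T_f$ near the origins. It suffices to show that every monomial $g^\alpha$ with $\alpha \in C_f := \hat C_f \cap L_f$ lies in $O(C)$, since then pullback carries $O(C_f)$ into $O(C)$. For such $\alpha$, the isomorphism $\nu$ gives $\nu(g^\alpha) = z^\alpha \in C \subset L$, so by hypothesis 2 we have $g^\alpha \in O(L)$, placing its support in $L$. Simultaneously, the factorization $g^\alpha = z^\alpha V^\alpha$ on $\widetilde T$ places its support in $z^\alpha\hat C \subset \hat C$. Together, the support lies in $\hat C \cap L = C$, so $g^\alpha \in O(C)$ as desired.

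To promote $\bar\Phi$ to an analytic isomorphism of germs, I would show that the gluings of $\widetilde T$ and $\widetilde T_f$ defining $T$ and $T_f$ correspond under $\Phi$. If $z^\alpha(p) = z^\alpha(p')$ for every $\alpha \in C$, then since each $g^\alpha \in O(C)$ descends to a function on $T$, we get $g^\alpha(p) = g^\alpha(p')$ for every $\alpha \in C_f$, i.e.\ $\Phi(p) \sim_f \Phi(p')$ in $\widetilde T_f$, so $\bar\Phi$ is well-defined. The main obstacle is the converse direction: because the semigroup isomorphism $\nu\colon C \to C_f$ forces the fibers of $\pi\colon \widetilde T \to T$ and $\pi_f\colon \widetilde T_f \to T_f$ near the origin to have the same cardinality, and because $\Phi$ is injective (local biholomorphism) and carries each fiber of $\pi$ into a fiber of $\pi_f$, the image must equal the full target fiber, giving bijectivity of $\bar\Phi$ near the origin. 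Continuity of $\bar\Phi^{-1}$ and its holomorphicity on the top-dimensional torus orbit (where the Jacobian of $\Phi$ is nonvanishing) then combine with Lemma \ref{almost normal} to extend holomorphicity of the inverse across the codimension-one gluing locus, completing the argument.
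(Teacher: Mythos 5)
Your proposal follows the paper's proof essentially step for step: apply Lemma~\ref{Existance of a common cone for several functions} to $f_1,\dots,f_n$ to get a common $T\in\mathcal T$ on whose normalization each $f_i/x_i$ is a unit, define the $g_j$ as the monomials in the $f_i$ with $\nu(g_j)$ equal to the standard coordinates of that $T$, check that the Jacobian of $(g_1,\dots,g_n)$ at the origin of $\widetilde T$ is diagonal with unit entries, use hypothesis 2 to see that the support of each $g^\alpha$ ($\alpha\in C_f$) lies in $\hat C\cap L=C$ so the map descends to $T\to T_f$, prove bijectivity by comparing fiber cardinalities of the two normalization maps, and finally invoke Lemma~\ref{almost normal} to conclude the descended map is a biholomorphism. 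The only noticeable variation is in the bijectivity step: the paper derives the inequality $|\pi^{-1}(x)|\ge|\pi_f^{-1}(G(x))|$ from the fact that $\hat G$ preserves the coordinate cross, whereas you invoke directly that the semigroup isomorphism $\nu|_{C}\colon C\to C_f$ forces the fibers to have equal cardinality; these are two phrasings of the same combinatorial fact and both are left at the same level of detail as the paper, so the argument is sound.
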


\begin{proof}
According to the Theorem \ref{Existance of a common cone for several functions} there exist a toric
variety $T\in {\cal T}$ such that $\frac{f_k}{x_k}$ converge to a non-zero holomorphic function in
a neighborhood of zero of the normalization $\widetilde{T}$ for $1\le k\le n.$ Let
$(y_1,\dots,y_n)$ be the standard coordinates in $T$ and $C\in {\cal C}$ be the corresponding cone.
The transition matrix from $(x_1,\dots,x_n)$ to $(y_1,\dots,y_n)$ is integer and upper-triangular
with units on diagonal. For instance, let $y_k=x_1^{d_{1k}}\dots x_{k-1}^{d_{(k-1)k}}x_k$ for
$k=1,\dots,n.$ Let $g_k=f_1^{d_{1k}}\dots f_{k-1}^{d_{(k-1)k}}f_k$ for $k=1,\dots,n.$ Let
$\hat{C}_f\in \hat{\cal C}_f$ be the cone generated by $(g_1,\dots,g_n)$ and let $T_f\in {\cal
T}_f$ be the corresponding toric variety. Then $\nu(g_k)=y_k$ for $k=1,\dots,n$ and
$\frac{g_k}{y_k}$ converge to holomorphic functions in a neighborhood of the origin in the
normalization $\widetilde{T}$ of $T$ for $k=1,\dots,n.$ Let $h_k=\frac{g_k}{y_k}.$ Then
$\frac{\partial g_i}{\partial y_j}(0)=\frac{\partial h_iy_i}{\partial y_j}(0)=h_i(0)\frac{\partial
y_i}{\partial y_j}(0)+\frac{\partial h_i}{\partial y_j}(0)y_i(0)=h_i(0)\delta_{i,j}.$ Since
$h_i(0)\neq 0$ for $i=1,\dots,n,$ the Inverse Function Theorem is applicable. Therefore,
$(g_1,\dots,g_n)$ provides an isomorphism $\hat{G}:\widetilde{U}\to\widetilde{U_f}$ of a polydisk
$\widetilde{U}$ with center at the origin of $\widetilde{T}$ to a neighborhood of the origin
$\widetilde{U_f}$ in $\widetilde{T}_f.$

Let $\pi(\widetilde{U})=U\subset T$ and $\pi_f(\widetilde{U_f})=U_f\subset T_f.$ We need to show,
that the isomorphism $\hat{G}:\widetilde{U}\to\widetilde{U_f}$ can be pushed down to an isomorphism
$G:U\to U_f.$ According to the Lemma \ref{almost normal}, it is enough to show that $G$ is a
homeomorphism. Moreover, since the topology of $U$ ($U_f$ respectively) is the factor topology of
$\pi:\widetilde{U}\to U$ ($\pi_f:\widetilde{U_f}\to U_f$ respectively), it is enough to show, that
$G$ is a bijection.

First, we construct the map $G$ and then prove that it is surjective and injective. We get that it
is regular for free, but the above arguments allow us to avoid considering the inverse map. The
condition \ref{generators of C_f belong to O(L)} provides that the generators of $C_f$ are regular
(and, therefore, well defined) on $U.$ Indeed, they are regular on $\widetilde{U}$ and they belong
to $O(L)$ by condition \ref{generators of C_f belong to O(L)}. Surjectivity follows immediately
from the diagram. All we need to prove is the injectivity.

Easy to see, that $\hat{G}$ maps the coordinate cross to the coordinate cross and, in particular,
maps $\widetilde{T}^k\cap \widetilde{U}$ to $\widetilde{T}^k_f\cap \widetilde{U}_f,$  where
$\widetilde{T}^k$ and $\widetilde{T}^k_f$ are respectively the coordinate subspaces
$\widetilde{T}^k=\{y_n=\dots=y_{k+1}=0\}\subset \widetilde{T}$ and
$\widetilde{T}^k_f=\{g_n=\dots=g_{k+1}=0\}\subset \widetilde{T}_f.$ Therefore, for any point $x\in
U$ the number of preimages of $x$ in $\widetilde{U}$ is bigger or equal to the number of preimages
of $G(x)$ in $\widetilde{U}_f.$ But the preimages of $x$ are mapped injectively by $\hat{G}$ to the
preimages of $G(x).$ Therefore, $\hat{G}|_{\pi^{-1}(x)}$ is a bijection to $\pi_f^{-1}(G(x)),$
which implies the injectivity of $G.$
\end{proof}

Switching from ${\cal T}$ to ${\cal T}'$ is called a {\it change of variables}. Easy to see, that a
change of variables gives an isomorphism from $F(L)$ to $F(L_f).$

The same arguments works for any bigger cone $C'\supset C.$ More precisely, we have the following

\setcounter{cor}{0}

\begin{cor}
Let $C\in {\cal C}$ and $C_f\in {\cal C}_f$ be as in the Theorem \ref{change of variables}. Let
$C'\in {\cal C}$ be such that $C\subset C'.$ Let $C'_f\in {\cal C}_f$ be such that the transition
matrix from the standard generators of the $\hat{C}'_f$ to the standard generators of the
$\hat{C}_f$ is the same as the transition matrix for the generators of $\hat{C}'$ and $\hat{C}$
respectively. Then there exist neighborhoods of the origins in $T_{C'}$ and $T_{C'_f}$ $U'$ and
$U'_f$ and the unique isomorphism $G':U'\to U'_f$ such that $G\circ\phi_{C',C}=\phi_{C'_f,C_f}\circ
G'$ on $U'.$
\end{cor}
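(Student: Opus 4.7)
The plan is to mimic the proof of Theorem \ref{change of variables}, now starting from the larger cone $\hat{C}'$ in place of $\hat{C}$. Let $(y_1,\dots,y_n)$ and $(y'_1,\dots,y'_n)$ denote the standard coordinates on $T_C$ and $T_{C'}$, let $M=(m_{ji})$ be the common upper-triangular transition matrix so that $y_i=\prod_j (y'_j)^{m_{ji}}$ and $g_i=\prod_j (g'_j)^{m_{ji}}$, and let $N=(n_{ij})=M^{-1}$. My first step is to \emph{define} the would-be standard coordinates $g'_j$ on $T_{C'_f}$ directly as Laurent monomials in $g_1,\dots,g_n$, and hence in $f_1,\dots,f_n$, by $g'_j:=\prod_i g_i^{n_{ij}}$. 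Because $\nu$ is a valuation (Theorem \ref{F(L) is a Field}) and $\nu(g_i)=y_i$, one obtains $\nu(g'_j)=y'_j$, so these are the correct candidates.

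Writing $g_i=y_i h_i$ with $h_i$ holomorphic and non-vanishing near $0\in \widetilde{T}_C$ (as in the proof of Theorem \ref{change of variables}), I would pull each $h_i$ back to a non-vanishing holomorphic function $\tilde{h}_i$ on $\widetilde{T}_{C'}\simeq \mathbb{C}^n$ via the monomial map $\widetilde{T}_{C'}\to \widetilde{T}_C$ induced by $M$, and then observe that $g'_j = y'_j\cdot\prod_i \tilde{h}_i^{\,n_{ij}}$. The product on the right is holomorphic and non-vanishing near the origin because each $\tilde{h}_i$ is, regardless of the signs of the $n_{ij}$. Consequently the Jacobian of $(g'_1,\dots,g'_n)$ with respect to $(y'_1,\dots,y'_n)$ is diagonal and invertible at $0$, and the Inverse Function Theorem produces an analytic isomorphism $\hat{G}':\widetilde{U}'\to\widetilde{U}'_f$ between polydisks in the two normalizations.

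The last two steps reproduce, essentially verbatim, the end of the proof of Theorem \ref{change of variables}. First, I would push $\hat{G}'$ down to an analytic map $G':U'\to U'_f$ by verifying that the semigroup generators of $C'_f$ lie in $O(L)$: each such generator is a Laurent monomial in $f_1,\dots,f_n$ whose $\nu$-value is a semigroup generator of $C'$ and thus lies in $C'\subset L$, so hypothesis (2) in the setup of Theorem \ref{change of variables} applies. Second, I would invoke Lemma \ref{almost normal} together with the coordinate-cross-preserving bijectivity argument to conclude that $G'$ is an analytic isomorphism. The desired compatibility $G\circ\phi_{C',C}=\phi_{C'_f,C_f}\circ G'$ is then automatic on standard coordinates: by construction $y'_j\mapsto g'_j$ and $y_i=\prod_j (y'_j)^{m_{ji}}\mapsto\prod_j (g'_j)^{m_{ji}}=g_i$, which are exactly the formulas defining the two sides. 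The main technical obstacle I anticipate is this push-down step: verifying single-valuedness on the non-normal $T_{C'}$ requires exactly the same care as in the original theorem, now applied to the potentially more complicated semigroup generators of $C'_f$ rather than just to its standard generators $g'_j$.
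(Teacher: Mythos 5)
The proposal is correct and follows the route the paper intends (the paper merely remarks that ``the same arguments work'' for the bigger cone $C'\supset C$): defining $g'_j=\prod_i g_i^{n_{ij}}$, pulling the units $h_i$ back along the monomial map $\widetilde{T}_{C'}\to\widetilde{T}_C$ so that $g'_j=y'_j\prod_i\tilde{h}_i^{\,n_{ij}}$, running the Inverse Function Theorem, and pushing down via Lemma \ref{almost normal} and condition $(2)$ of the setup of Theorem \ref{change of variables} is precisely a repetition of that theorem's proof with $C'$ in place of $C$, and the compatibility is then a tautology in standard coordinates. The only point you leave unaddressed is the asserted \emph{uniqueness} of $G'$, but it is immediate: $\phi_{C',C}$ and $\phi_{C'_f,C_f}$ restrict to isomorphisms on the dense open orbits $T^n$ and $T^n_f$, so on $U'\cap T^n$ one is forced to have $G'=\phi_{C'_f,C_f}^{-1}\circ G\circ\phi_{C',C}$, and by density this determines $G'$ on all of $U'$.
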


So, the change of variables provides isomorphisms between the neighborhoods of the origins in the
elements of ${\cal T}$ and ${\cal T}_f$ respectively, at least starting from some $T\in {\cal T}$
and $T_f\in {\cal T}_f.$ These isomorphisms commute with the maps $\phi_{C',C}:T_{C'}\to T_{C'_f}.$

\subsection{Residue.}

One can consider the free one-dimensional module $\Omega(L)$ over $F(L)$ with the generator
$\omega_{T^n}=\frac{dx_1}{x_1}\wedge\dots\wedge\frac{dx_n}{x_n}.$ Note, that $\omega_{T^n}$ doesn't
depend on the choice of coordinates $(x_1,\dots,x_n).$ Then for every element of
$\omega\in\Omega(L)$ there exist a toric variety $T\in{\cal T}$ with coordinates $(x_1,\dots,x_n)$
such that $\omega=x_1^{d_1}\dots x_n^{d_n}\phi dx_1\wedge\dots\wedge dx_n$ where $\phi$ is a
holomorphic non-zero function in a neighborhood of the origin in $\widetilde{T}.$ We call the
elements of $\Omega(L)$ the germs of meromorphic $n$-forms at the flag of orbits
$T^n,T^{n-1}\dots,T^0.$ According to the Lemma \ref{expansions of rational functions}, any
meromorphic form on a neighborhood of the origin in any $T\in{\cal T}$ can be expanded into a power
series from $\Omega(L).$

\begin{df}
Let $\omega\in\Omega(L),$ $\omega=(\sum\limits_{p\in L^n}a_pp)\omega_{T^n}.$ Then the residue of
$\omega$ is given by $res(\omega)=a_1.$
\end{df}

\begin{lem}\label{residue as integral}
$$
res(\omega)=\frac{1}{(2\pi i)^n}\int\limits_{\tau^n}\omega,
$$
\noindent where $\tau^n=\{(x_1,\dots,x_n)\in T^n: |x_1|=\epsilon_1,\dots,|x_n|=\epsilon_n\}$ for
$(x_1,\dots,x_n)$ --- coordinates on a toric variety $T\in {\cal T},$ and $\epsilon_i$ are small
enough, so that $\omega$ converges on $\tau^n.$ The orientation on $\tau^n$ is provided by the form
$\frac{1}{(2\pi i)^n}\omega_{T^n}|_{\tau^n}$ (note, that this form is real).
\end{lem}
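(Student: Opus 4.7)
The plan is to pick standard coordinates in which $\omega$ has a convergent Laurent expansion on $\tau^n$, compute the integral of each monomial term directly by the parametrization $x_j=\epsilon_j e^{i\theta_j}$, and then exchange the integral with the infinite sum using normal convergence on $\tau^n$.

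First I would fix a toric variety $T\in {\cal T}$ with standard coordinates $(x_1,\dots,x_n)$ such that $\omega=\bigl(\sum_{(k_1,\dots,k_n)} a_{(k_1,\dots,k_n)} x_1^{k_1}\cdots x_n^{k_n}\bigr)\omega_{T^n}$, the series being the expansion of $\omega\in\Omega(L)$ in these coordinates; the existence of such $T$ is guaranteed by the discussion following Lemma \ref{Existance of a common cone for several functions}. The Newton polyhedron of $\omega$ sits inside some $\hat{C}+p_0$, so the series is majorized, on sets $\{|x_j|\le\epsilon_j\}$, by a convergent series of positive terms once $\epsilon_j$ are taken small enough. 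In particular the series converges normally on the compact torus $\tau^n=\{|x_j|=\epsilon_j\}$, which allows termwise integration.

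Next I would compute the elementary integrals. With $x_j=\epsilon_j e^{i\theta_j}$ we have $\frac{dx_j}{x_j}=i\,d\theta_j$, so
\[
\int_{\tau^n} x_1^{k_1}\cdots x_n^{k_n}\,\frac{dx_1}{x_1}\wedge\cdots\wedge\frac{dx_n}{x_n}
= i^n\,\epsilon_1^{k_1}\cdots\epsilon_n^{k_n}\int_{[0,2\pi]^n}\!\! e^{i(k_1\theta_1+\cdots+k_n\theta_n)}\,d\theta_1\cdots d\theta_n,
\]
which equals $(2\pi i)^n$ if all $k_j=0$ and vanishes otherwise. The orientation convention stated in the lemma makes $\frac{1}{(2\pi)^n}\,d\theta_1\wedge\cdots\wedge d\theta_n$ the positive volume form, so no sign correction is needed. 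Summing the termwise integrals then gives $\int_{\tau^n}\omega=(2\pi i)^n a_1$, which is exactly the claimed identity. Finally I would note that the answer is independent of the chosen $T\in {\cal T}$ and of $(\epsilon_1,\dots,\epsilon_n)$: the coefficient $a_1$ is defined intrinsically in $\Omega(L)$, and for a different admissible $T'$ the change-of-coordinates formulas preserve the constant term of the expansion.

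The only subtle point is the exchange of sum and integral, i.e.\ verifying normal convergence on $\tau^n$ for $\epsilon_j$ small enough. This is where Lemma \ref{Minimal element} really does the work: it lets us enclose the support of $\omega$ inside a shifted simple cone $\hat{C}+p_0$, whose monomials can be written as $p_0\cdot x_1^{m_1}\cdots x_n^{m_n}$ with all $m_j\ge 0$ in appropriate coordinates, reducing the problem to the usual normal convergence of an ordinary Taylor series in a polydisc. Once this is in hand, the computation is mechanical.
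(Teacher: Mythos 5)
Your proof is correct and is essentially an expansion of the paper's one-line argument ("follows from Fubini's Theorem and the formula for the one-dimensional residues"): parametrizing $\tau^n$ by $x_j=\epsilon_j e^{i\theta_j}$, integrating term by term, and invoking normal convergence to justify the interchange is precisely what "Fubini plus the one-dimensional residue formula" means here. No gaps, same route, just with the details written out.
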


\begin{proof}
Follows immediately from the Fubini's Theorem and the formula for the one-dimensional residues.
\end{proof}

Suppose that the $n$-tuple of functions $(f_1,\dots,f_n)$ defines a change of variables. Let
$\omega\in \Omega(L)$ be a germ of a meromorphic form. Let $\hat{G}:\widetilde{U}\to
\widetilde{U}_f$ be an isomorphism of neighborhoods of the origins in the normalizations of the
appropriate toric varieties $T\in{\cal T}$ and $T_f\in{\cal T}_f$ provided by the change of
variables (see Theorem \ref{change of variables}). According to the Corollary $1$ from the Theorem
\ref{change of variables}, one can assume that $\omega$ converges to a meromorphic form in
$U\subset T.$ Then one can push forward $\omega$ using the isomorphism $\hat{G}.$ The result can be
expanded as a power series from $\Omega(L_f).$ Therefore, we get a map from $F_*:\Omega(L)\to
\Omega(L_f).$ Easy to see, that this map is an isomorphism of $F(L)$-modules ($\Omega(L_f)$ is an
$F(L)$-module via the isomorphism of $F(L)$ and $F(L_f)$ provided by the change of variables).

\begin{lem}
Changes of variables doesn't change the residue, i.e. $res(F_*(\omega))=res(\omega).$
\end{lem}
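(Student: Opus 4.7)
The plan is to compute both residues as contour integrals using Lemma~\ref{residue as integral} and then to relate the two integrals via the analytic isomorphism $\hat{G}$ underlying the change of variables. By Corollary~1 of Theorem~\ref{change of variables} one may arrange that $\omega$ converges to a meromorphic form in the neighborhood $U\subset T$ whose lift $\widetilde{U}\subset \widetilde{T}$ is carried biholomorphically onto $\widetilde{U}_f\subset \widetilde{T}_f$ by $\hat{G}$. I would fix a small torus $\tau^n=\{|y_k|=\epsilon_k\}$ inside the open orbit of $\widetilde{U}$ on which the Laurent expansion of $\omega$ converges. Since $F_*(\omega)$ is by construction the pushforward of $\omega$ under $\hat{G}$ (so that $\hat{G}^*F_*(\omega)=\omega$), the standard change-of-variables formula for integrals gives
$$\int_{\tau^n}\omega \;=\; \int_{\hat{G}(\tau^n)}F_*(\omega).$$

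The remaining task is to replace the cycle $\hat{G}(\tau^n)$ by a standard torus $\tau^n_f=\{|g_k|=\epsilon'_k\}$ in the open orbit of $\widetilde{U}_f$ without altering the value of the integral. Since $F_*(\omega)$ belongs to $\Omega(L_f)$, its polar set is contained in the union of coordinate hyperplanes $\{g_1\cdots g_n=0\}$, so on the complement $\widetilde{U}^*_f:=\widetilde{U}_f\setminus\{g_1\cdots g_n=0\}$ the form is holomorphic and, being of top degree, automatically closed. By Stokes's theorem it therefore suffices to show that $\hat{G}(\tau^n)$ and $\tau^n_f$ represent the same class in $H_n(\widetilde{U}^*_f;\mathbb{Z})$.

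For the homological step I would use that $\widetilde{U}^*_f$ deformation retracts onto $\tau^n_f$, so $H_n(\widetilde{U}^*_f;\mathbb{Z})\cong \mathbb{Z}$ with generator $[\tau^n_f]$. Since $\hat{G}$ sends each coordinate hyperplane $\{y_k=0\}$ to the corresponding $\{g_k=0\}$, it restricts to a biholomorphism of the punctured neighborhoods $\widetilde{U}^*:=\widetilde{U}\setminus\{y_1\cdots y_n=0\}$ and $\widetilde{U}^*_f$, and the analogous retraction identifies $[\tau^n]$ as the generator of $H_n(\widetilde{U}^*;\mathbb{Z})$. It follows that $[\hat{G}(\tau^n)]=\pm[\tau^n_f]$, with sign $+1$ because $\hat{G}$ is holomorphic with non-zero Jacobian determinant $\prod_i h_i(0)$ at the origin (computed in the proof of Theorem~\ref{change of variables}) and hence orientation-preserving as a real diffeomorphism. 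Dividing the resulting equality of integrals by $(2\pi i)^n$ yields $res(F_*(\omega))=res(\omega)$.

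The delicate step is this homological identification: one must work strictly inside the open orbit, where the coordinate functions are non-vanishing, and the orientation bookkeeping requires unwinding the explicit form of the Jacobian of $\hat{G}$ at the origin. Once that is in place, the rest of the argument is a routine combination of the change-of-variables formula and Stokes's theorem.
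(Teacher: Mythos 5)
Your overall strategy — convert both residues to contour integrals via Lemma~\ref{residue as integral}, push the cycle through $\hat{G}$, and then argue that $\hat{G}(\tau^n)$ is homologous to $\tau^n_f$ in the punctured neighborhood — is the same approach the paper takes. The paper just compresses the homological step into the single observation that, after identifying $T^n$ and $T^n_f$ with the standard tori via the respective standard coordinates, the restriction of $\hat{G}$ to the open orbit is homotopic to the identity.

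However, your argument for the sign has a real gap. From ``$\hat{G}$ is holomorphic with non-zero Jacobian, hence orientation-preserving as a real diffeomorphism'' you conclude that $\hat{G}_*$ acts as $+1$ on $H_n(\widetilde{U}^*_f;\mathbb{Z})\cong\mathbb{Z}$. That implication is false in general: every biholomorphism of an open subset of $\mathbb{C}^n$ is orientation-preserving as a real map (its real Jacobian determinant is $|\det_{\mathbb{C}}|^2>0$), yet the coordinate swap $(y_1,y_2)\mapsto(y_2,y_1)$ on a punctured bidisk acts as $-1$ on $H_2((S^1)^2)$. Orientation of the ambient $2n$-manifold does not control the orientation of an embedded $n$-torus. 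What actually forces the sign to be $+1$ is not orientation-preservation but the fact that $\hat{G}$ has the special form $g_k=y_kh_k(y)$ with $h_k(0)\neq 0$ (so the Jacobian at $0$ is \emph{diagonal}, not merely nondegenerate). That structure gives an explicit homotopy through maps of the punctured polydisk, e.g.\ $g_k^{(t)}=y_k\,h_k(ty)$ followed by a radial rescaling $y_k\mapsto y_k\,h_k(0)$, connecting $\hat{G}|_{T^n}$ to the identity on $(S^1)^n$. Equivalently, one can avoid the homology class argument altogether: $\hat{G}^*\bigl(\tfrac{dg_k}{g_k}\bigr)=\tfrac{dy_k}{y_k}+d\log h_k$, and $d\log h_k$ is exact on the polydisk, so $\hat{G}^*\omega_{T^n_f}-\omega_{T^n}$ is exact and the two periods over $\tau^n$ agree by Stokes. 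Either way, the precise reason the sign comes out right is the diagonal Jacobian / homotopy-to-identity, which your proposal names as the delicate point but does not actually supply.

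Two smaller issues worth fixing: you should shrink $\widetilde{U}_f$ to a polydisk before invoking the deformation retraction onto $\tau^n_f$ (the theorem only produces $\widetilde{U}_f$ as some neighborhood of the origin), and you should check that $F_*(\omega)$ really is holomorphic on all of $\widetilde{U}^*_f$ on the chosen polydisk, which requires choosing $T_f$ and the polydisk according to the converging almost-monomial representation $\omega_f=x_1^{d_1}\cdots x_n^{d_n}\phi\,dx_1\wedge\cdots\wedge dx_n$ of the expansion.
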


\begin{proof}
Follows from the Lemma \ref{residue as integral} and the observation that $\hat{G}$ restricts to an
isomorphism of $\widetilde{U}\cap T^n$ and $\widetilde{U}_f\cap T^n_f$ and this isomorphism is
homotopic to the identity map (here we identify $T^n$ and $T^n_f$ with the standard tori via the
standard coordinates on $T$ and $T_f$).
\end{proof}

\subsection{Algebraic functions.}

Let $X$ be an analytic (algebraic) variety and let $g_0,\dots,g_k$ be regular functions on $X.$ Consider the equation $g_0+g_1t\dots+g_kt^k=0.$ Let $U=X\backslash(sing(X)\cup \{g_k=0\}\cup \Sigma),$ where $\Sigma=\{Dis=0\},$ where $Dis$ is the discriminant of the polynomial $g_0+g_1t\dots+g_kt^k.$ Then there exist the $k$-sheeted covering $p:W\to U$ and a regular function $f$ on $W,$ such that for every point $x\in U$ the values of $f$ on the preimage $p^{-1}(x)$ are exactly the roots of the equation $g_0+g_1t\dots+g_kt^k.$ In such a situation, we say that $f$ is an algebraic function on $X.$ We say that $\Sigma$ is the divisor of branching of $f.$ 

\begin{lem}\label{solution is meromorphic}
Let $X$ be an analytic (algebraic) variety and let $g_0,\dots,g_k$ be regular functions on $X.$
Suppose that there is an open subset $U\subset X,$ such that $X\backslash U$ is a finite union of
subvarieties of codimension $1,$ and a holomorphic function $f$ on $U$ such that $f$ satisfy the
equation $g_0+g_1t\dots+g_kt^k=0$ on $U.$ Then $f$ can be continued to a meromorphic function on
$X.$
\end{lem}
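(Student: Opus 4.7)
My plan is to reduce the problem to extending a function satisfying a monic polynomial relation, for which the elementary root bound supplies boundedness and the classical Riemann removable singularity theorem supplies the extension. Set $h := g_k f$. Multiplying $g_0 + g_1 f + \dots + g_k f^k = 0$ by $g_k^{k-1}$ and using $g_k^{k-1} f^i = g_k^{k-1-i} h^i$, I obtain on $U$ the monic relation
$$h^k + g_{k-1} h^{k-1} + g_{k-2} g_k h^{k-2} + \dots + g_1 g_k^{k-2} h + g_0 g_k^{k-1} = 0,$$
i.e.\ $h^k + a_{k-1} h^{k-1} + \dots + a_0 = 0$ with all $a_i$ regular on $X$. (If $g_k$ vanishes identically on some irreducible component of $X$, I would restrict to that component and induct on $k$.) Once $h$ is shown to be meromorphic on $X$, so is $f = h / g_k$.

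Next I would extract local boundedness from the monic relation. The standard root bound yields
$$|h(x)| \le 1 + k \max_{0\le i\le k-1} |a_i(x)| \qquad (x\in U),$$
and the right-hand side is continuous on all of $X$. Hence $h$ is locally bounded on $U$.

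Then I would invoke Riemann's second removable singularity theorem. On the smooth locus of $X$, the set $X\setminus U$ is a finite union of hypersurfaces and a locally bounded holomorphic function extends across them. To cover singular points of $X$, I would pass to the normalization $\pi:\widetilde X\to X$: the pullback $\pi^* h$ is bounded and holomorphic on $\widetilde X \setminus \pi^{-1}(X\setminus U)$, which is the complement of a codimension-$1$ analytic subset of the normal space $\widetilde X$, so $\pi^* h$ extends to a holomorphic function on $\widetilde X$.

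The delicate step, which I expect to be the main obstacle, is concluding that $h$ is meromorphic on $X$ itself and not merely on $\widetilde X$. This is exactly what the monic integral relation above buys us: $h$ is integral over $\mathcal{O}_X$ on $U$, so the holomorphic extension to $\widetilde X$ exhibits $h$ as a weakly holomorphic function on $X$, and because the normalization map $\pi$ is finite such functions lie in the sheaf $\mathcal{M}_X$ of meromorphic functions on $X$. Consequently $h \in \mathcal{M}_X(X)$, and therefore $f = h / g_k \in \mathcal{M}_X(X)$, as required.
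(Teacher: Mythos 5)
Your proof is correct and takes essentially the same approach as the paper: both pass from $f$ to $\tilde f = g_k f$, which satisfies a monic relation with coefficients $g_i g_k^{k-i-1}$ regular on $X$, deduce local boundedness of $\tilde f$ from that monic relation, extend $\tilde f$ holomorphically across codimension one on the normalization via the Riemann extension theorem, and then descend to a meromorphic function on $X$ (the paper phrases the descent via the normalization being a birational isomorphism, you via finiteness of the normalization and weak holomorphicity, which is the same fact).
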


\begin{proof}
Consider the function $\tilde{f}=g_kf.$ Easy to see that it is holomorphic on $U$ and satisfy the
integral equation $g_0g_k^{k-1}+g_1g_k^{k-2}t+\dots+g_{k-1}t^{k-1}+t^k=0$ on $U.$ Let's prove that
$\tilde{f}$ is locally bounded on $X,$ i.e. for any point $x\in X$ there exist a neighborhood $V$
of $x,$ such that $\tilde{f}$ is bounded in $V\cap U.$ Indeed, assume that it is not true. The
coefficients $g_ig_k^{k-i-1}$ are regular on $X,$ and, therefore, locally bounded. So, there exist
a neighborhood $V$ of $x$ and a constant $M>1,$ such that $|g_ig_k^{k-i-1}|<M$ for $0\le i\le k-1.$
Since $\tilde{f}$ is not bounded in $V\cap U,$ there exist a point $y\in V\cap U,$ such that
$|\tilde{f}(y)|>kM.$ Then $|\tilde{f}^k(y)|>kM|\tilde{f}^{k-1}(y)|.$ But
$|g_i(y)g_k^{k-i-1}(y)\tilde{f}^i(y)|<M|\tilde{f}^i(y)|\le M|\tilde{f}^{k-1}(y)|$ for $0\le i\le
k-1.$ So,
$|g_0g_k^{k-1}+g_1g_k^{k-2}\tilde{f}(y)+\dots+g_{k-1}\tilde{f}^{k-1}(y)|<|\tilde{f}^k(y)|.$
Therefore, $\tilde{f}$ doesn't satisfy the equation
$g_0g_k^{k-1}+g_1g_k^{k-2}t+\dots+g_{k-1}t^{k-1}+t^k=0$ at $y,$ which is a contradiction.

Let $p:\widetilde{X}\to X$ be the normalization. Then $\tilde{f}\circ p$ is holomorphic in the
complement of a finite union of subvarieties of codimension $1$ in the normal variety
$\widetilde{X}$ and is locally bounded in $\widetilde{X}.$ Therefore, it is regular on
$\widetilde{X}.$ The normalization map $p$ is a birational isomorphism, so $\tilde{f}$ is
meromorphic on $X.$ Finally, $f=\frac{\tilde{f}}{g_k},$ therefore $f$ is also meromorphic on $X.$
\end{proof}

Let now $g_0,\dots,g_k\in F(L).$ One can choose $T\in {\cal T}$ such that $g_k$ and the
discriminant of the equation $g_kt^k+\dots+g_0=0$ are almost monomial in a polydisk $D$ with center
at the origin of the normalization $\widetilde{T}\simeq\mathbb C^n.$ Let $X=D\cap T^n.$ Let $p:W\to
X$ be the corresponding $k$-sheeted covering. Suppose that $W$ is connected (which is equivalent to
say that the equation $g_kt^k+\dots+g_0=0$ is irreducible). We need the following Lemma:

\begin{lem}\label{coverings}
Let $N=k!.$ Consider the map $P:\mathbb C^n\to \mathbb C^n$ given by
$P(x_1,\dots,x_n)=(x_1^N,\dots,x_n^N).$ Let $W'=P^{-1}(X).$ Then there exist a map $\phi:W'\to W$
such that $p\circ\phi=P.$
\end{lem}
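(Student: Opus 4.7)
The plan is to apply the classical covering--space lifting criterion, so the whole question reduces to a computation of fundamental groups and their induced maps. After possibly shrinking $D$, the discriminant and $g_k$ are nowhere vanishing on $X$, so $p:W\to X$ is an honest unramified $k$-sheeted covering. Writing $D=\{|x_i|<\epsilon_i\}$, the intersection $X=D\cap T^n=\{0<|x_i|<\epsilon_i\}$ is a product of $n$ punctured disks, so it is connected with $\pi_1(X)\cong\mathbb{Z}^n$ generated by small loops around the coordinate hyperplanes. The preimage $W'=P^{-1}(X)=\{0<|x_i|<\epsilon_i^{1/N}\}$ is again a product of punctured disks, hence connected with $\pi_1(W')\cong\mathbb{Z}^n$, and the induced homomorphism $P_*:\pi_1(W')\to\pi_1(X)$ is multiplication by $N=k!$ on each factor.

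Next, since $W$ is assumed connected, the subgroup $H:=p_*\pi_1(W)\subset \pi_1(X)$ has index $k$. The quotient $\pi_1(X)/H$ is then an abelian group of order $k$, so by Lagrange's theorem every element has order dividing $k$. Therefore $k\cdot\pi_1(X)\subset H$, and since $k\mid N=k!$, also $N\cdot\pi_1(X)\subset H$. This is exactly the inclusion $P_*\pi_1(W')\subset p_*\pi_1(W)$, which is the hypothesis of the lifting criterion.

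Choosing compatible basepoints, the criterion produces a continuous lift $\phi:W'\to W$ with $p\circ\phi=P$. Since $p$ is a local biholomorphism and $P$ is holomorphic, $\phi$ is automatically holomorphic. The only genuinely arithmetic step, and the reason the factorial appears, is the elementary observation that any subgroup of index $k$ in $\mathbb{Z}^n$ contains $k\mathbb{Z}^n$, together with the divisibility $k\mid k!$; everything else is standard covering--space theory applied to an explicit product of punctured disks, where the fundamental groups and the effect of $P_*$ are directly visible.
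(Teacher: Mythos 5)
Your proof is correct and follows essentially the same route as the paper: identify $\pi_1(X)\cong\mathbb Z^n$, observe that $p_*\pi_1(W)$ has index $k$ so contains $N\mathbb Z^n$, note $P_*\pi_1(W')=N\mathbb Z^n$, and invoke covering-space theory to produce the lift. The only (welcome) refinement is that you spell out, via Lagrange's theorem, why $N\pi_1(X)\subset p_*\pi_1(W)$, a step the paper merely asserts when it writes $a_i^N\in p_*(\pi_1(W))$.
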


\begin{proof}
According to the classical theory, the connected coverings of $X$ are classified up to isomorphism
by the subgroups of the fundamental group $\pi_1(X)$ and the number of sheets corresponds to the
index of the subgroup. In more details, the induced homomorphism $p_*:\pi_1(W)\to \pi_1(X)$ is
injective and its isomorphism type defines the isomorphism type of the covering $p:W\to X.$

Since $\pi_1(X)\simeq\mathbb Z^n,$ it follows, that $p_*(\pi_1(W))\subset \pi_1(X)$ is a sublattice
of full rank and index $k.$ Let $(a_1,\dots,a_n)$ be the basis of $\pi_1(X)$ corresponding to the
loops going around coordinate hyperplanes in positive direction. Then $a_i^N\in p_*(\pi_1(W))$ for
all $i.$ Consider now the covering $P:W'\to X.$ Easy to see, that $P_*(\pi_1(W'))\subset \pi_1(X)$
is generated by $(a_1^N,\dots,a_n^N).$ Therefore, $\pi_1(W')\subset \pi_1(W)$ (here we identified
$\pi_1(W)$ and $\pi_1(W')$ with their images in $\pi_1(X)$). Therefore, there exist a covering
$\phi:W''\to W$ such that $\phi_*(\pi_1(W''))=\pi_1(W')\subset \pi_1(W)\subset \pi_1(X).$ But then
the composition $p\circ\phi:W''\to X$ and $P:W'\to X$ correspond to the same subgroup in $\pi_1(X)$
and, therefore, isomorphic.
\end{proof}

According to the Lemmas \ref{coverings} and \ref{solution is meromorphic}, the equation
$g_kt^k+\dots+g_0=0$ has a meromorphic solution $f$ in $P^{-1}(D).$ Moreover, $f$ is holomorphic in
$W'=P^{-1}(D)\cap (\mathbb C^*)^n.$ Therefore, we have the following theorem:

\begin{te}
Let $g_0,\dots,g_k\in F(L)$ be such that the equation $g_kt^k+\dots+g_0=0$ is irreducible. Let
$N=k!.$ Then there exist a toric variety $T\in {\cal T}$ with the coordinates $(x_1,\dots,x_n),$
and the integers $m_1,\dots,m_n$ such that the root $f$ of the equation $g_kt^k+\dots+g_0=0$ can be
written in the form of the Piezo series
$f=\sqrt[N]{x_1}^{m_1}\dots\sqrt[N]{x_n}^{m_n}\sum\limits_{i_1,\dots,i_n\ge
0}f_{i_1,\dots,i_n}\sqrt[N]{x_1}^{i_1}\dots\sqrt[N]{x_n}^{i_n}$ converging in a neighborhood of the
origin in the normalization $\widetilde{T}.$
\end{te}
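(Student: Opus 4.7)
The proof is essentially assembled from the discussion immediately preceding the statement together with one final packaging step. My plan has three stages.

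First, I would set up the geometric picture. Using Lemma \ref{Existance of a common cone for several functions} applied to the finite collection $g_0,\dots,g_k$ together with the discriminant of $g_k t^k+\dots+g_0$, I would choose a toric variety $T\in\mathcal{T}$ with standard coordinates $(x_1,\dots,x_n)$ so that all these elements of $F(L)$ are almost monomial on a polydisc $D$ in $\widetilde{T}\simeq\mathbb{C}^n$ centered at the origin. On $X=D\cap T^n$, the polynomial equation then has nonzero leading coefficient and non-vanishing discriminant, so its roots define an unbranched $k$-sheeted covering $p:W\to X$. Irreducibility of the polynomial gives connectedness of $W$, which is exactly the hypothesis needed for Lemma \ref{coverings}.

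Second, I would invoke Lemma \ref{coverings} with $N=k!$ to obtain $\phi:W'\to W$ satisfying $p\circ\phi=P$, where $P(y_1,\dots,y_n)=(y_1^N,\dots,y_n^N)$ and $W'=P^{-1}(X)$. Pulling back a chosen root along $\phi$ produces a holomorphic function $\tilde f:=f\circ\phi$ on $W'$ that satisfies the pulled-back equation $\sum_i (g_i\circ P)\,\tilde f^{\,i}=0$. The coefficients $g_i\circ P$ are regular on the full polydisc $P^{-1}(D)$, and $P^{-1}(D)\setminus W'$ is contained in the union of coordinate hyperplanes, hence is a finite union of codimension-one subvarieties. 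Lemma \ref{solution is meromorphic} then shows $\tilde f$ extends to a meromorphic function on $P^{-1}(D)$.

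Third, I would convert this meromorphic extension into a Piezo series. Since the pole divisor of $\tilde f$ lies in $\{y_1\cdots y_n=0\}$, writing $\tilde f=A/B$ with $A,B$ coprime holomorphic at the origin forces every irreducible component of $\{B=0\}$ to be a coordinate hyperplane; by unique factorization in $\mathcal{O}_{\mathbb{C}^n,0}$ this gives $B=y_1^{\alpha_1}\cdots y_n^{\alpha_n}\,u$ for some nonvanishing unit $u$. Therefore $\tilde f=y^{-\alpha}(A/u)$, and expanding the holomorphic factor $A/u$ as a convergent Taylor series at $0$ yields $\tilde f(y)=y_1^{m_1}\cdots y_n^{m_n}\sum_{I\ge 0}f_I\, y^I$ with $m_i=-\alpha_i$. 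Substituting $y_i=\sqrt[N]{x_i}$ recovers the original root $f$ on $X$ and produces exactly the Piezo series of the statement.

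The genuinely delicate step is the last one: translating "meromorphic on $P^{-1}(D)$, holomorphic off the coordinate axes" into the explicit \emph{monomial times convergent power series} form demanded by the theorem. Everything else is direct bookkeeping using Lemmas \ref{Existance of a common cone for several functions}, \ref{coverings}, and \ref{solution is meromorphic}; the factoriality/Nullstellensatz argument in stage three is what actually produces the integers $m_1,\dots,m_n$ and the nonnegative-index expansion.
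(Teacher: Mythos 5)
Your proposal is correct and follows essentially the same route as the paper: choose $T$ so that the data are almost monomial on a polydisk, use Lemma \ref{coverings} to factor the $k$-sheeted covering through the $N$-th power map, and then invoke Lemma \ref{solution is meromorphic} to extend the pulled-back root meromorphically over $P^{-1}(D)$. The paper stops at ``meromorphic on $P^{-1}(D)$, holomorphic on $W'$'' and leaves the final packaging into a Piezo series implicit; you spell out that last step via the coprime-fraction and factoriality argument, which is exactly the right way to produce the monomial $y_1^{m_1}\cdots y_n^{m_n}$ and the nonnegative-index Taylor tail.
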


\section{Toric Neighborhoods of Parshin's points.}

\subsection{Branched coverings and Generic components of the preimage of a hypersurface.}

\begin{df}
Let $X$ and $Y$ be non-empty pure-dimensional algebraic (analytic) varieties of the same dimension.
An algebraic (analytic) map $f:Y\to X$ is called a {\it branched covering} if it is proper,
surjective, and is a local isomorphism at a generic point (i.e. there is an open dense subset
$U\subset Y$ consisting of smooth points of $Y$ such that for every point $y\in U$ $f(y)$ is a
smooth point of $X$ and the differential of $f$ has full rang at $y$).
\end{df}

Note, that a composition of two branched coverings is again a branched covering.

\begin{df}
Let $f:Y\to X$ be a branched covering. Let $H\in X$ be a hypersurface in $X$. Then the {\it generic
component of the preimage} (shortly, the {\it generic preimage}) $H_f\subset Y$ is the union of
those irreducible components $\hat{H}_i$ of the full preimage $f^{-1}(H)$ for which the restriction
$f|_{\hat{H}_i}:\hat{H}_i \to H$ is a local isomorphism at a generic point.
\end{df}

Note, that if $f:Y\to X$ is a blow-up with a smooth center $C\subset X,$ then the generic preimage
$H_f$ coincide with the strict transform of $H$ unless $C$ and $H$ has common irreducible
components.

Let $f:Y\to X$ and $g:Z\to Y$ be branched coverings. Let $H\subset X$ be a hypersurface. Then,
clearly, $(H_f)_g=H_{f\circ g}.$ Indeed, the restriction of $f\circ g$ to an irreducible component
of $g^{-1}(f^{-1}(H))$ is a local isomorphism at a generic point if and only if $g$ is a local
isomorphism at a generic point of this component and $f$ is a local isomorphism at a generic point
of the image of this component.

\begin{lem}
$f|_{H_f}:H_f\to H$ is a branched covering.
\end{lem}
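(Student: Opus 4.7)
The plan is to verify the four clauses in the definition of a branched covering for $f|_{H_f}: H_f \to H$: non-emptiness and pure-dimensionality of matching dimension, properness, surjectivity, and local isomorphism at a generic point. Two of these are essentially free. Local isomorphism at a generic point of each component of $H_f$ is built into the definition of $H_f$: the components of $f^{-1}(H)$ that fail this condition are thrown away. Properness follows because $H_f$ is closed in $Y$ (being a union of irreducible components of the closed set $f^{-1}(H)$) and the restriction of a proper map to a closed subset is proper.

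The main work will be to establish surjectivity, and I plan to do it by exploiting the open dense locus $U \subset Y$ where $f$ is a local isomorphism. Since $f$ is proper and $Y \setminus U$ has strictly smaller dimension than $Y$, the image $f(Y \setminus U)$ is a proper closed subset of $X$, so $f(U)$ is open and dense in $X$. Given an irreducible component $H' \subset H$, the intersection $H' \cap f(U)$ is then a non-empty open subset of $H'$. Pick a generic point $x$ of $H'$ lying in this intersection, and pick a preimage $y \in U$ with $f(y) = x$. Since $f$ is a local isomorphism near $y$, the preimage $f^{-1}(H)$ near $y$ is the pullback of the hypersurface $H$ under a local isomorphism; hence the irreducible component $\hat{H}_i$ of $f^{-1}(H)$ passing through $y$ maps isomorphically onto a neighborhood of $x$ in $H'$. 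In particular, $\hat{H}_i \subset H_f$ and $f(\hat{H}_i) \supset $ a dense subset of $H'$. Properness of $f$ then upgrades this to $f(\hat{H}_i) \supset H'$, giving surjectivity onto every component of $H$.

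For pure-dimensionality of $H_f$, I will use that each component $\hat{H}_i \subset H_f$ maps by local isomorphism at a generic point onto its image, which is contained in some irreducible component $H'$ of $H$; by dimension count this image is dense in $H'$, so $\dim \hat{H}_i = \dim H' = \dim H$ (the last equality by pure-dimensionality of $H$). Non-emptiness of $H_f$ is guaranteed by the surjectivity argument above, which produced at least one component of $f^{-1}(H)$ in $H_f$ over each component of $H$.

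The only subtle step is the density of $f(U)$ in $X$, which rests on the fact that a proper algebraic (analytic) map does not increase the dimension of its image and that $U$ is dense in $Y$ with $\dim Y = \dim X$. Everything else is bookkeeping, and I expect no further obstacles.
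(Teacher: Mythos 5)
Your treatment of properness (restriction of a proper map to the closed set $H_f$) and of the local-isomorphism-at-a-generic-point clause (built into the definition of $H_f$) matches the paper and is fine. But the surjectivity argument has a real gap at the sentence ``Given an irreducible component $H'\subset H$, the intersection $H'\cap f(U)$ is then a non-empty open subset of $H'$.'' Density of $f(U)$ in $X$ does not imply $f(U)$ meets every $(n-1)$-dimensional subvariety of $X$: the complement $X\setminus f(U)\subset f(Y\setminus U)$ is a closed subset of dimension $\le n-1$, so it can contain $H'$ entirely. Concretely, take $X=\mathbb C^2$, $Y=\{z^2=x\}\subset\mathbb C^3$, $f(x,y,z)=(x,y)$, a degree-two branched covering. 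The full-rank locus is $U=Y\setminus\{z=0\}$, so $f(Y\setminus U)=\{x=0\}$. For $H=H'=\{x=0\}$ we get $H'\cap f(U)=\emptyset$, and your construction produces no preimage point $y\in U$ over a generic point of $H'$. The lemma is still true here (the unique component $\{x=z=0\}$ of $f^{-1}(H)$ maps isomorphically to $H$), but your argument does not see it. In short, a component $H'$ that lies inside the branch divisor of $f$ breaks the step, and this is exactly the interesting case.

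The paper avoids this by arguing on the other side: it applies Sard's Lemma to the components $\hat H_i$ of $f^{-1}(H)$ that are \emph{not} in $H_f$, concluding their images have measure zero in $H$; since $f^{-1}(H)\to H$ is surjective (by surjectivity of $f$), the remaining components, i.e.\ $H_f$, must have dense image, and properness then gives surjectivity. To repair your proof you would need to reason about the restriction $f|_{f^{-1}(H')}$ itself (e.g.\ pick a top-dimensional component $\hat H_i$ with dense image in $H'$ and argue, via Sard or generic smoothness, that its differential must attain full rank generically), rather than about $f(U)\subset X$. Once density of $f(H_f)$ in each $H'$ is secured, the rest of your write-up (properness $\Rightarrow$ closed image $\Rightarrow$ surjectivity, and the dimension bookkeeping) goes through.
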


\begin{proof}
By Sard's Lemma, the image of those components of $f^{-1}(H)$ which are not components of $H_f$ has
measure $0$ in $H.$ Therefore, $H_f$ is non-empty and its image is dense in $H.$ Then, since $H_f$
is a closed subset in $Y,$ $f|_{H_f}$ is proper. So, we only need to prove that $f|_{H_f}$ is
surjective to $H.$ Indeed, let $x\in H.$ Let $K\subset H$ be a compact neighborhood of $x$ in $H.$
Since $f|_{H_f}$ is proper, $N=f^{-1}(K)\cap H_f$ is compact. So, $f(N)$ is compact and, therefore,
closed. Since $f(H_f)$ is dense in $H,$ there exist a sequence $\{y_m\}\subset H_f,$ such that
$f(y_m)\rightarrow x$ and $\{f(y_m)\}\in K.$ Then $\{y_m\}\subset N$ and $\{f(y_k)\}\subset f(N).$
Therefore, $x\in f(N).$
\end{proof}

Note, that the normalization map is always a branched covering.

\begin{te}\label{degree one covering}
Let $X$ be a pure-dimensional analytic (algebraic) variety. Let $\pi:Y\to X$ be a degree one
branched covering and let $Y$ be normal. Let $p:\widetilde{X}\to X$ be the normalization. Let
$H\subset X$ be a hypersurface. Then there is a natural map $\widetilde{\pi}:H_{\pi}\to H_p$ which
is a degree one branched covering.
\end{te}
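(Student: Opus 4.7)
The plan is to factor $\pi$ through the normalization $p\colon\widetilde X\to X$ and then invoke the functorial identity $H_{p\circ q}=(H_p)_q$ established in the paragraph immediately following the definition of the generic preimage. Since $\pi$ is a degree one branched covering, it is proper, surjective, and birational. Because $Y$ is normal, the universal property of the normalization produces a unique morphism $q\colon Y\to\widetilde X$ with $p\circ q=\pi$. This $q$ is proper (since $\pi$ is proper and $p$ is separated) and birational (since $p\circ q=\pi$ is), hence a degree one branched covering; surjectivity follows from properness together with birationality.

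With the factorization in place, the composition law gives
\[
H_\pi \;=\; H_{p\circ q}\;=\;(H_p)_q.
\]
The preceding lemma, which asserts that the restriction of a branched covering to the generic preimage of a hypersurface is itself a branched covering, applied to $q\colon Y\to\widetilde X$ and the hypersurface $H_p\subset\widetilde X$, then shows that
\[
\widetilde\pi := q\bigl|_{H_\pi}\colon H_\pi\longrightarrow H_p
\]
is a branched covering. This is the natural map claimed by the theorem.

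To check that $\widetilde\pi$ has degree one, I use that $q$ is birational: there is a dense open $V\subset\widetilde X$ on which $q$ restricts to an isomorphism $q^{-1}(V)\xrightarrow{\sim}V$. Each irreducible component of $H_p$ is a codimension one subvariety of $\widetilde X$ and therefore meets $V$ in a dense open subset; the restriction of $q$ identifies the preimages of these open pieces inside $H_\pi$ with them. Hence $\widetilde\pi$ is a local isomorphism at a generic point of every component of $H_\pi$, i.e.\ has degree one.

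The main technical obstacle is the very first step: producing the lift $q\colon Y\to\widetilde X$ in the analytic category and verifying that it is an honest morphism rather than merely a meromorphic map. This rests on the normality of $Y$ together with the universal property of analytic normalization; once $q$ is in hand, everything else is a formal manipulation of the identity $(H_p)_q=H_{p\circ q}$ combined with the earlier lemma on restrictions to generic preimages.
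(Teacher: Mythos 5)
Your overall architecture matches the paper's: factor $\pi$ through the normalization via the universal property, invoke the composition law $(H_p)_q=H_{p\circ q}$, and apply the preceding lemma on restrictions of branched coverings to generic preimages. But there is a genuine gap exactly at the place where the paper invests most of its effort, namely in verifying that $\widetilde\pi$ has degree one.

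You write that $q$ is birational, so there is a dense open $V\subset\widetilde X$ on which $q$ is an isomorphism, and that ``each irreducible component of $H_p$ is a codimension one subvariety of $\widetilde X$ and therefore meets $V$ in a dense open subset.'' This inference is not valid as stated: ``dense open'' only guarantees that the complement $\widetilde X\setminus V$ is a proper closed subset, which can perfectly well contain a hypersurface. If a component of $H_p$ were contained in $\widetilde X\setminus V$, nothing in your argument would rule out two distinct components of $H_\pi$ lying over it, and the degree of $\widetilde\pi$ could exceed one. What you actually need is that the locus over which $q$ fails to be a local isomorphism has codimension $\ge 2$ in $\widetilde X$, so that a generic point of every component of $H_p$ lies in $V$. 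This is precisely what the paper proves: it first passes to the open locus where both $\widetilde X$ and $Y$ are smooth (the complements have codimension $\ge 2$ by normality), and then establishes the separate Lemma asserting that for a degree one branched covering $f$ of manifolds, $\operatorname{codim} f(\operatorname{crit} f)\ge 2$, via an explicit local normal form around a would-be codimension one component of the branch locus. That lemma is the technical heart of the theorem; your proposal treats its conclusion as obvious, which it is not, especially in the analytic category where one cannot simply cite a scheme-theoretic Zariski's Main Theorem. To close the gap you would need to supply an argument (along the lines of the paper's Lemma, or an analytic analogue of purity of the branch locus for degree one maps) showing that $q$ is an isomorphism outside a codimension two subset of $\widetilde X$.
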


\begin{proof}
According to the universal properties of the normalization, the map $\pi:Y\to X$ factors through
the normalization, i.e. there exist a map $\tilde{\pi}:Y\to\widetilde{X}$ such that
$\pi=p\circ\tilde{\pi}.$ Moreover, $(H_p)_{\tilde{\pi}}=H_{\pi}.$ Therefore, it is enough to
consider the case when $X$ is normal. So, $\widetilde{X}=X,$ $H_p=H,$ $\tilde{\pi}=\pi,$ and we
need to prove that $\pi|_{H_{\pi}}:H_{\pi}\to H$ is a degree one branched covering.

Let $sing(X)\subset X$ and $sing(Y)\subset Y$ be the singular loci of $X$ and $Y$ correspondingly.
Since $\pi$ is a proper map and $sing(Y)$ is a closed subvariety, it follows that
$\pi(sing(Y))\subset X$ is a closed subvariety in $X.$ Since $X$ and $Y$ are normal, the
codimensions of $sing(X),$ $sing(Y),$ and, therefore, $\pi(sing(Y))$ are at least $2.$ Let
$X'=X\backslash (sing(X)\cup \pi(sing(Y))).$ Let $H'=H\cap X'.$ Note, that $H'$ is a complement to
a closed subvariety of codimension at least $1$ in $H.$ Let $Y'=\pi^{-1}(X').$ Now $X'$ and $Y'$
are smooth, $\pi':=\pi|_{Y'}:Y'\to X'$ is still a degree one branched covering, and $H'\subset X'$
is a hypersurface. Note, that $H'_{\pi'}=H_{\pi}\cap Y',$ and it is a complement to a subvariety of
codimension at least $1$ in $H_{\pi}$ (although $Y\backslash Y'$ can have codimension $1$).

Let $crit(\pi')\subset Y$ be the critical locus of $\pi.$ Let $\widetilde{X}=X'\backslash
\pi'(crit(\pi')).$ Let $\widetilde{Y}=\pi'^{-1}(\widetilde{X}).$ Then
$\widetilde{\pi}=\pi|_{\widetilde{Y}}:\widetilde{Y}\to\widetilde{X}$ is an isomorphism. Indeed, it
is non-degenerate and, therefore, unbranched covering of degree $1$.

We need the following

\begin{lem}\label{codimension}
Let $M$ and $N$ be analytic manifolds and $f:M\to N$ be a degree one branched covering. Then
$codim(f(crit(f)))\ge 2.$
\end{lem}

\begin{proof}
Suppose that $codim(f(crit(f)))=1.$ Then there exist $p\in crit(f)$ such that
\begin{enumerate}
\item $p$ is a smooth point of the hypersurface $crit(f);$
\item $f^{-1}(f(crit(f)))$ coincide with $crit(f)$ in a neighborhood of $p;$
\item $f(p)$ is a smooth point of the codimension $1$ irreducible component of $f(crit(f));$
\item $f|_{crit(f)}$ is a local isomorphism at $p.$
\end{enumerate}
Therefore, there exist coordinate systems $(x_1,\dots,x_n)$ and $(y_1,\dots,y_n)$ respectively in a
neighborhood $U$ of $p$ in $M$ and in a neighborhood $V$ of $f(p)$ in $N,$ such that
\begin{enumerate}
\item $f(U)\subset V.$
\item $U\cap crit(f)=\{x_n=0\};$
\item $V\cap crit(f)=\{y_n=0\};$
\item $f(x_1,\dots,x_{n-1},0)=(x_1,\dots,x_{n-1},0).$
\end{enumerate}
So, the map $f|_{U}$ is given by
$$
\begin{array}{l}
y_1=x_1+x_n\phi_1(x); \\
\vdots \\
y_{n-1}=x_{n-1}+x_n\phi_{n-1}(x); \\
y_n=x_n^k\phi_n(x),
\end{array}
$$
\noindent where $k\ge 1$ and $\phi_n$ is not divisible by $x_n.$ Note, that $\phi_n(p)\neq 0.$
Indeed, otherwise $\{\phi_n=0\}\subset f^{-1}(f(crit(f)))=\{x_n=0\}$ and $\phi_n$ is not divisible
by $x_n.$

One can get rid of all the $\phi_i$'s simply by changing the coordinates in a neighborhood of $p\in
M.$ Indeed, let
$$
\begin{array}{l}
t_1=x_1+x_n\phi_1(x); \\
\vdots \\
t_{n-1}=x_{n-1}+x_n\phi_{n-1}(x); \\
t_n=x_n\sqrt[k]{\phi_n(x)}.
\end{array}
$$
Easy to check that the Jacobian is not zero, so $(t_1,\dots,t_n)$ are indeed coordinates in a
neighborhood of $p.$ In $t$'s coordinates the map $f$ is given by
$$
\begin{array}{l}
y_1=t_1; \\
\vdots \\
y_{n-1}=t_{n-1}; \\
y_n=t_n^k.
\end{array}
$$
Since a general point in $N$ should have only one primage, $k$ should be equal to $1.$ But then $f$
is non-degenerate at $p.$
\end{proof}

So, the codimension of $\pi'(crit(\pi'))$ is at least $2.$ Therefore,
$\widetilde{H}:=H'\cap\widetilde{X}$ is a complement to a subvariety of degree at least $1$ in
$H'.$ Its' preimage $\widetilde{\pi}^{-1}(\widetilde{H})=\widetilde{H}_{\widetilde{\pi}}$ is also a
complement to a subvariety of codimension at least $1$ in $H'_{\pi'}$. And
$\pi|_{\widetilde{H}}:\widetilde{H}_{\widetilde{\pi}}\to\widetilde{H}$ is an isomorphism. So,
$\pi|_{H_{\pi}}:H_{\pi}\to H$ is a degree one branched covering.
\end{proof}

\subsection{Resolution of Singularities for Flags.}

To avoid difficulties with the resolution of singularities, we need to assume some compactness
condition on the analytic varieties. For simplicity, let us assume that all the analytic varieties
are restrictions of bigger analytic varieties to relatively compact open subsets.

We need the following Theorem which is a direct corollary of the famous Hironaka Theorem (...) on
resolution of singularities:

\begin{te}\label{resolution}
Let $X$ be a variety. Let $Y_1,\dots, Y_K$ be closed subvarieties in $X.$ Then there exist a
branched covering of degree one $\pi:\tilde{X}\to X$ such that:
\begin{enumerate}
\item $\tilde{X}$ is smooth;
\item $\pi|_{\tilde{X}\backslash D}$ is an isomorphism to $reg(X)\backslash (Y_1\cup\dots\cup Y_k),$
where $D=H_1\cup\dots\cup H_N$ is a union of smooth exceptional hypersurfaces $H_i,$ which
simultaneously have only normal crossings. We denote ${\cal D}=\{H_1,\dots,H_N\}$ the set of
exceptional hypersurfaces (let us always assume the exceptional hypersurfaces irreducible).
\item For any $k=1,\dots,K$ $\pi^{-1}(Y_k)$ is a union of hypersurfaces from ${\cal D}$;
\end{enumerate}
\end{te}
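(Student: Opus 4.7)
The plan is to derive this statement by invoking Hironaka's theorem twice: first to resolve the singularities of $X,$ and then to principalize the ideals defining the $Y_k$'s, while keeping track at each step that the map remains a degree one branched covering with an exceptional locus having only simple normal crossings.

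First, I would apply Hironaka's resolution theorem to the (relatively compact piece of the) variety $X.$ This yields a proper birational morphism $\pi_1:X_1\to X$ with $X_1$ smooth such that $\pi_1$ is an isomorphism over $\mathrm{reg}(X)$ and the exceptional locus $E_1\subset X_1$ is a simple normal crossings divisor. Because $\pi_1$ is proper, surjective, and an isomorphism at every point over the open dense set $\mathrm{reg}(X),$ it is a branched covering of degree one in the sense of the earlier definition.

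Next, I would apply Hironaka's embedded/principalization theorem on the smooth variety $X_1$ to the collection of closed subvarieties $\pi_1^{-1}(Y_1),\dots,\pi_1^{-1}(Y_K),$ together with the components of $E_1.$ This produces a composition of blow-ups with smooth centers $\pi_2:X_2\to X_1$ such that $X_2$ is smooth, the total transform of the ideal of each $\pi_1^{-1}(Y_k)$ becomes a divisor supported on smooth exceptional components, and the union of all new and old exceptional components forms a simple normal crossings divisor $D=H_1\cup\dots\cup H_N\subset X_2.$ Since each blow-up with a smooth center is a degree one branched covering, so is their composition, and consequently $\pi:=\pi_1\circ\pi_2:X_2\to X$ is a degree one branched covering. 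On the open complement of $D$ in $X_2,$ the map $\pi_2$ is an isomorphism onto its image in $X_1\setminus E_1,$ which $\pi_1$ maps isomorphically onto $\mathrm{reg}(X)\setminus(Y_1\cup\dots\cup Y_K)$; this verifies conditions (1) and (2). Condition (3) holds by construction, since principalization ensures each $\pi^{-1}(Y_k)$ is a union of some of the $H_i$'s.

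The only genuine subtlety is that Hironaka's theorems are stated over a ground field (or in the complex-analytic setting for compact patches), which is why the excerpt restricts attention to relatively compact pieces of analytic varieties. The main technical point to watch is that when we principalize in the second step, we must feed the centers not only the $\pi_1^{-1}(Y_k)$ but also the components of $E_1,$ so that the final exceptional divisor of $\pi$ has only normal crossings; this is exactly the content of the strong (functorial) version of Hironaka's principalization, so there is no extra work to do beyond citing it.
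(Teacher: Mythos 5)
The paper offers no proof of this theorem at all—it is simply stated as ``a direct corollary of the famous Hironaka Theorem''—and your argument supplies precisely the chain of reasoning that citation is meant to compress: resolve $X$ to get a smooth $X_1$ with SNC exceptional divisor $E_1$, then principalize the preimages $\pi_1^{-1}(Y_k)$ together with the components of $E_1$, observing that each stage (and hence the composition) is proper, surjective, and an isomorphism over a dense open set, i.e.\ a degree-one branched covering in the sense of the paper's definition. Your proof is correct and is exactly what the paper intends.
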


In order to improve the resolution, we'll need to do additional blow-ups with centers in
intersections of exceptional hypersurfaces. We'll need some simple properties of this type of
blow-ups:

\begin{lem}\label{combinatorial}
Let $\tilde{X},$ $D=H_1\cup\dots\cup H_N,$ and ${\cal D}=\{H_1,\dots,H_N\}$ be as in Theorem
\ref{resolution}. Let $C=H_{i_1}\cap\dots\cap H_{i_k}$ and $\pi_C:\tilde{X}_C\to\tilde{X}$ be the
blow-up with center in $C.$ Let $H_C=\pi_C^{-1}(C)$ and $\tilde{H}_i\subset \tilde{X}^C$ be the
strict transform of $H_i\in {\cal D}.$ Denote $D_C:=\tilde{H}_1\cup\dots\cup \tilde{H}_N\cup H_C$
and ${\cal D}_C=\{\tilde{H}_1,\dots,\tilde{H}_N,H_C\}.$ Then
\begin{enumerate}
\item $D_C$ again has only normal crossing;
\item $\tilde{H}_{i_1}\cap\dots\cap \tilde{H}_{i_k}=\emptyset;$
\item $\pi_C|_{\tilde{H}_{i_1}\cap\dots\cap \tilde{H}_{i_{k-1}}}$ is an isomorphism to $H_{i_1}\cap\dots\cap H_{i_{k-1}};$
\item if $C\not\subset H_j\in {\cal D}$ (i.e. $j\neq i_1,\dots,i_k$) then $\pi_C^{-1}(H_j)=\tilde{H}_j.$
\end{enumerate}
\end{lem}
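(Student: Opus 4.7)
The plan is to reduce to a local calculation in standard blow-up charts. At any point $p \in \tilde{X}$, the normal crossings hypothesis yields local analytic coordinates $(x_1, \ldots, x_n)$ such that each hypersurface from $\mathcal{D}$ passing through $p$ is a distinct coordinate hyperplane $\{x_{a_i} = 0\}$. When $p \notin C$ the map $\pi_C$ is a local isomorphism near $p$ and each of the four assertions is immediate, so only the case $p \in C$ needs analysis. After reindexing one may assume $H_{i_\alpha} = \{x_\alpha = 0\}$ for $\alpha = 1, \ldots, k$, so that $C = \{x_1 = \cdots = x_k = 0\}$, and any additional $H_j$ passing through $p$ has the form $\{x_{b_j} = 0\}$ with $b_j > k$.

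Next I would set up the standard affine cover of $\tilde{X}_C$ near $\pi_C^{-1}(p)$: $k$ charts $U_1, \ldots, U_k$, each isomorphic to $\mathbb{C}^n$ with coordinates $(y_1, \ldots, y_n)$, where on $U_\beta$ the map $\pi_C$ is given by
$$x_\beta = y_\beta,\quad x_\alpha = y_\beta y_\alpha \ \ (\alpha \in \{1,\ldots,k\},\ \alpha \neq \beta),\quad x_\gamma = y_\gamma \ \ (\gamma > k).$$
On $U_\beta$ the exceptional divisor $H_C$ is $\{y_\beta = 0\}$; the strict transform $\tilde{H}_{i_\alpha}$ for $\alpha \neq \beta$ is $\{y_\alpha = 0\}$; the strict transform of any $H_j$ with $b_j > k$ is $\{y_{b_j} = 0\}$; and crucially $\tilde{H}_{i_\beta}$ is absent from $U_\beta$, since the total transform of $\{x_\beta = 0\}$ is already the exceptional divisor.

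With this local picture in hand, three of the four statements are immediate. For (1), on each $U_\beta$ the divisor $D_C$ is a union of distinct coordinate hyperplanes, hence normal crossing. For (2), in every chart $U_\beta$ the factor $\tilde{H}_{i_\beta}$ is missing, so $\tilde{H}_{i_1} \cap \cdots \cap \tilde{H}_{i_k} \cap U_\beta = \emptyset$, and as the $U_\beta$ cover $\pi_C^{-1}(p)$ the full intersection is empty. For (4), if $C \not\subset H_j$ then near any point of $C$ one has $H_j = \{x_{b_j} = 0\}$ with $b_j > k$ (or else $H_j$ misses the neighborhood entirely), so the pullback in each chart is already the strict transform $\tilde{H}_j$ with no exceptional component.

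The only assertion requiring any real care is (3), which I expect to be the main obstacle. Working in chart $U_k$, the set $\tilde{H}_{i_1} \cap \cdots \cap \tilde{H}_{i_{k-1}} = \{y_1 = \cdots = y_{k-1} = 0\}$ is carried by $\pi_C$ via the identity on the surviving coordinates onto $\{x_1 = \cdots = x_{k-1} = 0\} = H_{i_1} \cap \cdots \cap H_{i_{k-1}}$. What must still be checked is that no point of $\tilde{H}_{i_1} \cap \cdots \cap \tilde{H}_{i_{k-1}}$ escapes $U_k$: away from the exceptional locus this is automatic since $\pi_C$ is an isomorphism there, and over any point of $C$ a fiber computation in the projective coordinates $[y_1 : \cdots : y_k]$ shows the intersection meets $\pi_C^{-1}(p)$ only at the single point $[0 : \cdots : 0 : 1]$, which lies in $U_k$. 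Hence $\pi_C$ restricts to a bijective analytic map between smooth varieties with non-degenerate differential, so it is an isomorphism, completing (3).
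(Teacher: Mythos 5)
Your proof is correct, and it takes a genuinely more computational route than the paper's. The paper disposes of item 1 as ``a standard property of blow-ups,'' of item 2 as ``trivial,'' proves item 3 by invoking the abstract fact that for nested smooth manifolds $W\supset U\supset V$ the strict transform of $U$ under blowing up $W$ along $V$ is the blow-up of $U$ along $V$ (an isomorphism here since $V=C$ has codimension one in $U=H_{i_1}\cap\dots\cap H_{i_{k-1}}$), and proves item 4 by identifying normal bundles. You instead verify all four assertions directly in the standard affine charts of the blow-up near a point of $C$, with the observation that $\tilde{H}_{i_\beta}$ is absent from the chart $U_\beta$ doing the real work for items 2 and 3. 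Your route is more self-contained and has the advantage of making explicit something the paper passes over silently in item 3, namely the identification of the intersection of strict transforms $\tilde{H}_{i_1}\cap\dots\cap \tilde{H}_{i_{k-1}}$ with the strict transform of the intersection $H_{i_1}\cap\dots\cap H_{i_{k-1}}$; in the chart $U_k$ both are visibly $\{y_1=\dots=y_{k-1}=0\}$. The paper's route is coordinate-free and shorter on the page, but leans on blow-up folklore it does not spell out; your chart argument is longer but elementary and leaves nothing to the reader.
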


\begin{proof}\

\begin{enumerate}
\item is a standard property of blow-ups;
\item is trivial;
\item follows immediately from the standard fact that if $W\supset U\supset V$ are smooth manifolds,
$\pi_V:W_V\to W$ is the blow-up with center in $V$ and $U_V$ is the strict transform of $U,$ then
$\pi_V|_{U_V}: U_V\to U$ is the blow-up with center in $V.$ Indeed, let $W=\tilde{X},$
$U=H_{i_1}\cap\dots\cap H_{i_{k-1}},$ and $V=C.$ Then $U_V=H_{i_1}\cap\dots\cap H_{i_{k-1}}$ and
$V$ is a hypersurface in $U,$ so $\pi_V|_{U_V}$ is an isomorphism to $U;$
\item it follows that $C$ is transversal to $H_j$ because $H_j$ is a hypersurface. In particular,
the normal bundle of $H_j\cap C$ in $H_j$ is canonically isomorphic to the normal bundle to $C$ in
$\tilde{X}$ restricted to $H_j\cap C.$ Therefore, the restriction $\pi_C|_{\pi_C^{-1}(H_j)}$ is the
blow-up of $H_j$ with center in $H_j\cup C.$
\end{enumerate}
\end{proof}

We'll do the additional blow-ups in $n-1$ steps. We call the consequent strict transforms of the
original hypersurfaces $H_1,\dots,H_N$ the $0$-type hypersurfaces and the consequent strict
transforms of the new hypersurfaces appearing after $k$th step --- the $k$-type hypersurfaces.

On the $k$th step we blow-up the intersections of $n-k+1$ $0$-type hypersurfaces. Note, that after
the $(k-1)$th step the $0$-type hypersurfaces cannot meet by more then $n-k+1$ at one point.
Therefore, the centers for the blow-ups on the $k$th step are disjoint and one can blow them up
simultaneously. Also, the $k$-type hypersurfaces are always disjoint for $k>0$ and the $0$-type
hypersurfaces are disjoint after $(n-1)$th step. It is convenient to label the $k$-type
hypersurfaces by $(n-k+1)$-tuples of $0$-type hypersurfaces.

Applying the above procedure, one get the following Lemma:

\begin{lem}\label{better resolution}
In the Theorem \ref{resolution}, one can assume also that ${\cal D}$ satisfy the following
conditions:
\begin{enumerate}
\item Let $H_i,H_j\in {\cal D}$ and $H_i\cap H_j\neq\emptyset.$ Then either $\pi(H_i)\subset\pi(H_j)$
or $\pi(H_i)\supset\pi(H_j);$
\item Let $H_{i_1},\dots,H_{i_k}\in {\cal D},$ $C:=H_{i_1}\cap\dots\cap H_{i_k}\neq\emptyset,$ and
$\pi(H_{i_1})\subset\dots\subset\pi(H_{i_k}).$ Then for any irreducible component $C^0$ of $C$
$\pi(C^0)=\pi(H_1).$
\end{enumerate}
\end{lem}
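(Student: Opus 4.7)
The plan is to carry out the inductive blow-up procedure outlined in the paragraphs preceding the lemma statement, and to prove by induction on the step $k$ that the exceptional configuration has a specific combinatorial structure. I start with the initial resolution $\pi_0:\tilde{X}_0\to X$ provided by Theorem \ref{resolution}, with exceptional divisors $\mathcal{D}_0=\{H_1,\dots,H_N\}$, and then perform $n-1$ rounds of simultaneous blow-ups: on the $k$-th step I blow up every non-empty intersection of $n-k+1$ distinct $0$-type hypersurfaces.

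The central technical claim I would establish by induction on $k$ is the following description of the configuration after step $k$: (i) the $0$-type hypersurfaces meet at most $n-k$ at a time; (ii) any two hypersurfaces of the same type $j\geq 1$ are disjoint; (iii) every $j$-type hypersurface ($j\geq 1$) carries a natural label $J$ which is an $(n-j+1)$-subset of $\{1,\dots,N\}$ (the indices of the $0$-type hypersurfaces whose intersection was blown up to create it), and two hypersurfaces of different types meet exactly when one label is contained in the other; (iv) whenever $J\supset J'$, we have $\pi(H_J)\subset\pi(H_{J'})$. The base case $k=0$ is part of Theorem \ref{resolution}. The inductive step applies Lemma \ref{combinatorial} at each connected component of the center, which is legitimate since the centers on step $k+1$ are pairwise disjoint by (i) of the inductive hypothesis; the four parts of Lemma \ref{combinatorial} translate directly into the updates for (i)--(iv) at step $k+1$.

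Granted this description at $k=n-1$, the two conditions of the lemma follow. Condition 1 holds because two intersecting hypersurfaces must have nested labels by (iii), which by (iv) yields the required comparability of images. Condition 2 holds because a chain of hypersurfaces with non-empty common intersection $C$ must have labels forming a descending chain under reverse inclusion, with $H_{i_1}$ labelled by the largest set; iterating part (3) of Lemma \ref{combinatorial} along the sequence of blow-ups that produced these hypersurfaces shows that the restriction of $\pi$ to any irreducible component $C^0$ of $C$ is a birational equivalence onto $\pi(H_{i_1})$, and hence surjective.

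The main obstacle I expect is the verification of part (iii) of the inductive claim at the step when a new center $C$ is blown up. One must show that the new exceptional divisor $H_C$ of type $k+1$ meets precisely those older hypersurfaces whose labels are nested with the label of $H_C$, and no others. The key tool for ruling out unwanted intersections is part (4) of Lemma \ref{combinatorial}: if the hypersurface $H_j$ in question does not contain $C$, then $\pi_C^{-1}(H_j)$ coincides with the strict transform of $H_j$ and is disjoint from $H_C$. Combining this with a careful bookkeeping of how the labels of older $j$-type hypersurfaces interact with the $(n-k)$-subset defining $C$, and with parts (1)--(2) to ensure normal crossings and to record which intersections have just become empty, should give the required control.
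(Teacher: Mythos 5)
Your proposal follows the paper's own strategy: the paper describes the same $n-1$-step sequence of simultaneous blow-ups of intersections of $0$-type hypersurfaces, labels the new hypersurfaces by subsets of $\{1,\dots,N\}$, and then disposes of the lemma with the single line ``Follows immediately from the Lemma~\ref{combinatorial}.'' You have made the implicit combinatorial invariant explicit, which is a genuine improvement in clarity over the paper's one-line proof, and the reduction of Condition~1 to (iii) and (iv) of your invariant is clean.

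Two points in your treatment of Condition~2, however, do not go through as written. First, part~(3) of Lemma~\ref{combinatorial} is a statement about intersections of strict transforms of the $0$-type hypersurfaces $H_{i_1},\dots,H_{i_{k-1}}$ under a single blow-up with center $H_{i_1}\cap\dots\cap H_{i_k}$; the set $C$ in Condition~2 of the lemma you are proving is an intersection of hypersurfaces of mixed types (some $0$-type, some exceptional divisors of later blow-ups), so ``iterating part~(3)'' is not a literal iteration of that statement but requires a separate argument about how the higher-type exceptional divisors restrict to the fibers of $H_{J_1}\to C_{J_1}$. Second, the claim that $\pi|_{C^0}$ is a ``birational equivalence onto $\pi(H_{i_1})$'' is false in general: in the simplest case $n=2$, $C^0$ is a curve in the exceptional $\mathbb{P}^1$-bundle $H_{\{1,2\}}$ over $H_1\cap H_2$ while $\pi(H_{\{1,2\}})=V_0$ is a point, so the map is a contraction, not birational. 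What is actually needed (and true) is only surjectivity of $\pi|_{C^0}$ onto $\pi(H_{i_1})$, which should be established by showing that $C^0$ dominates the center $C_{J_1}$ under the bundle projection $H_{J_1}\to C_{J_1}$, using normal crossings to see that the hypersurfaces $H_{i_2},\dots,H_{i_k}$ cut out a nonempty locus in each fiber. You have the right skeleton, but these two steps need to be filled in with something other than a direct appeal to part~(3).

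Finally, you explicitly flag part~(iii) of the invariant as the expected obstacle; for the lemma itself you only need the ``only if'' direction (intersecting implies nested labels), which is the easier half and does follow by iterated use of parts~(2) and~(4) of Lemma~\ref{combinatorial}, so you could weaken (iii) to avoid proving the harder ``if'' direction.
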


\begin{proof}
Follows immediately from the Lemma \ref{combinatorial}.
\end{proof}

As an immediate corollary of the Lemma \ref{better resolution}, we get the following result on
resolutions for flags of subvarieties.

\begin{df}
Let $(x_1,\dots,x_n)$ be a coordinate system in $U$ (i.e. $(x_1,\dots,x_n)$ maps $U$ isomorphically
to an open neighborhood of the origin in $\mathbb C^n$). A meromorphic function $f$ is called {\it
almost monomial} in $U$ with respect to $(x_1,\dots,x_n)$ if $f=x_1^{d_1}\dots x_n^{d_n}\phi,$
where $d_1,\dots, d_n$ are integers and $\phi$ is a holomorphic non-zero function on $U.$
\end{df}

\begin{te}\label{resolution for flag}
Let $V_n\supset V_{n-1}\supset\dots\supset V_0$ be a flag of algebraic (closed analytic)
subvarieties with $\dim V_i=i.$ Let $f_1,\dots,f_k$ be meromorphic functions on $V_n.$ Then there
exist a flag of smooth algebraic (closed analytic) subvarieties $\overline{V}_n\supset
\overline{V}_{n-1}\supset\dots\supset \overline{V}_0$ and a map $\pi:\overline{V}_n\to V_n,$ such
that
\begin{enumerate}
\item $\pi$ is a degree one branched covering (in fact, a composition of blow-ups);
\item $\overline{V}_k$ is the generic component of the preimage of $V_k$ with respect to $\pi|_{\overline{V}_{k+1}}$ for $k=n-1,n-2,\dots,0;$
\item for any point $a\in\overline{V}_i$ there exist a system of coordinates (called {\it good coordinates}) $(x_1,\dots,x_n)$ in a neighborhood $U$ of $a$ in $\overline{V}_n$ such that
\begin{enumerate}
\item $\overline{V}_j\cap U=\{b\in U: x_n(a)=\dots=x_{j+1}(a)=0\}$ for $j=i,i+1,\dots,n;$
\item $\pi^*f_1,\dots,\pi^*f_k$ are almost monomial in $U$ with respect to $(x_1,\dots,x_n).$
\end{enumerate}
\end{enumerate}
\end{te}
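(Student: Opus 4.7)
The plan is to reduce everything to a single application of Hironaka's theorem (Theorem \ref{resolution}) in its refined form (Lemma \ref{better resolution}), applied to the ambient variety $V_n$ with a carefully chosen collection of subvarieties. Specifically, I would feed into Theorem \ref{resolution} all flag members $V_{n-1},V_{n-2},\dots,V_0$ together with every irreducible component of the zero and pole divisors of $f_1,\dots,f_k$. This produces a degree-one branched covering $\pi\colon\overline V_n\to V_n$ with $\overline V_n$ smooth and whose exceptional divisor ${\cal D}=\{H_1,\dots,H_N\}$ has only normal crossings; after applying Lemma \ref{better resolution} we may additionally assume the image-inclusion and image-inheritance properties for ${\cal D}$.

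With $\overline V_n$ in hand, I would define $\overline V_{n-1},\dots,\overline V_0$ recursively as dictated by condition (2): each $\overline V_k$ is the generic component of the preimage of $V_k$ under $\pi|_{\overline V_{k+1}}$. Iterated application of Theorem \ref{degree one covering} then ensures that every restriction $\pi|_{\overline V_k}\colon \overline V_k\to V_k$ remains a degree-one branched covering (factoring through the normalization of $V_k$). Smoothness and the correct codimension of $\overline V_k\subset\overline V_{k+1}$ should come from the structure of ${\cal D}$: near any point $a\in\overline V_i$, the exceptional hypersurfaces through $a$ are coordinate hyperplanes of an appropriate local chart, and Lemma \ref{better resolution}(1) forces them to be totally ordered by inclusion of their images in $V_n$; this ordering, combined with Lemma \ref{better resolution}(2), should identify $\overline V_j$ (for $j$ with $a\in\overline V_j$) locally with the intersection of those $H_{\alpha}$'s whose image matches $V_j$ at the recursive step.

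Once the local picture is in place, condition (3) is essentially bookkeeping. A permutation of the local normal-crossings coordinates $(y_1,\dots,y_n)$ produces $(x_1,\dots,x_n)$ with $\overline V_j\cap U=\{x_{j+1}=\dots=x_n=0\}$ for all $j\geq i$; and because the divisors of $\pi^*f_1,\dots,\pi^*f_k$ are supported on ${\cal D}$ by our choice of input, each $\pi^*f_j$ is a monomial in the $x_\alpha$'s times a nonvanishing holomorphic factor, i.e.\ almost monomial.

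The main obstacle I expect is the middle step: identifying the iterated generic preimage $\overline V_k$ with a single smooth hypersurface in $\overline V_{k+1}$ at each of its points. This is purely combinatorial but delicate, and it relies sensitively on Lemma \ref{better resolution}(2), without which the generic preimage could pick up spurious components from deeper strata of ${\cal D}$ or fail to be smooth. If Lemma \ref{better resolution} by itself does not suffice to rule out such components, one would supplement it with additional blow-ups along intersections of exceptional hypersurfaces, exactly as in Lemma \ref{combinatorial}, in order to separate the contributions of hypersurfaces with distinct images before extracting the generic preimage.
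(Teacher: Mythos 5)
Your proposal follows essentially the same route as the paper: apply the refined resolution (Theorem \ref{resolution} plus Lemma \ref{better resolution}) to $V_n$ with the flag members and the divisors of the $f_j$'s, take iterated generic preimages $\overline V_k$, and use the normal-crossings structure together with the two ordering conditions of Lemma \ref{better resolution} to get smoothness and good coordinates. The one organizing idea you leave implicit and the paper makes explicit is the global identification $\overline V_k = D_{n-1}\cap\dots\cap D_k$ (where $D_j$ is the union of exceptional hypersurfaces $H$ with $\pi(H)=V_j$), which is what drives the inductive dimension-count and the smoothness argument; note also that Lemma \ref{better resolution} already incorporates the extra blow-ups of Lemma \ref{combinatorial}, so the fallback you suggest at the end is already built into the hypothesis.
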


\begin{proof}
We apply the Theorem \ref{resolution} and Lemma \ref{better resolution} to $V_n$ with subvarieties
$V_{n-1},\dots,V_0$ and all the divisors of $f_1,\dots,f_k.$ Let $\pi:\overline{V}_n\to V_n$ be the
resulting resolution map and ${\cal D}=\{H_1,\dots,H_N\}$ be the exceptional hypersurfaces. Denote
${\cal D}_k=\{H_i\in {\cal D}:\pi(H_i)=V_k\}$ and $D_k=\bigcup\limits_{H_i\in {\cal D}_k}H_i.$ Let
$\overline{V}_n\supset \overline{V}_{n-1}\supset\dots\supset \overline{V}_0$ be the flag of
consequent generic preimages.

\begin{lem}
$\overline{V}_k=D_{n-1}\cap\dots\cap D_k$ and $\overline{V}_k$ is smooth for all $k=0,\dots, n.$
\end{lem}

\begin{proof}
We first prove that $\overline{V}_k=D_{n-1}\cap\dots\cap D_k$ by induction starting from
$\overline{V}_{n-1}=D_{n-1},$ which is trivial by dimension.

Suppose that we already proved $\overline{V}_{k+1}=D_{n-1}\cap\dots\cap D_{k+1}.$ The inclusion
$D_{n-1}\cap\dots\cap D_k\subset \overline{V}_k$ follows from the condition $2$ of the Lemma
\ref{better resolution} and the dimension counting. Indeed, $dim(D_{n-1}\cap\dots\cap
D_k)=n-((n-1)-(k-1))=k=dim(V_k)$ and $\pi(D_{n-1}\cap\dots\cap D_k)=\pi(D_k)$ (unless
$D_{n-1}\cap\dots\cap D_k=\emptyset$ in which case the inclusion is trivial). Therefore, every
irreducible component of $D_{n-1}\cap\dots\cap D_k$ is mapped surjectively to $V_k,$ and, by
dimension, $\pi|_{D_{n-1}\cap\dots\cap D_k}$ is a local isomorphism at a general point.

Suppose now that $X\subset D_{n-1}\cap\dots\cap D_{k+1}$ is an irreducible subvariety of
codimension $1$ such that $\pi(X)=V_k.$ Since $\pi^{-1}(V_k)$ is a union of hypersurfaces from
${\cal D},$ it follows that $X\subset H\in {\cal D},$ such that $\pi(H)\supset V_k.$ Moreover,
there exist $H_{n-1}\in {\cal D}_{n-1},\dots,H_{k+1}\in {\cal D}_{k+1}$ such that $X\in
H_{n-1}\cap\dots\cap H_{k+1}\cap H.$ Consider $\pi(H).$ It is an irreducible subvariety in $V_n$
such that it either contains or is contained in $V_i$ for every $i=n-1,\dots,k+1$ and it contains
$V_k.$ Therefore, it coincide with one of the $V_i$'s for $i=n-1,\dots,k.$ We need to prove that
$\pi(H)=V_k.$ Suppose it is wrong and $\pi(H)=V_i$ for $i>k.$ Then $\pi(H_{n-1}\cap\dots\cap
H_{k+1}\cap H)=V_{k+1}$ which is impossible by dimension.

The only way $D_{n-1}\cap\dots\cap D_k$ can be singular is if two of its irreducible components
intersect each other. That means that there are two different sets of hypersurfaces
$H_{n-1},\dots,H_k$ and $H'_{n-1},\dots,H'_k$ with $H_i,H'_i\in {\cal D}_i,$ such that
$C:=H_{n-1}\cap\dots\cap H_k\cap H'_{n-1}\cap\dots\cap H'_k\neq\emptyset.$ Since there is at least
$n-k+1$ different hypersurfaces, $dim(C)<k.$ However, by the condition $2$ of the Lemma \ref{better
resolution}, $\pi(C)=V_k$ which is impossible by dimension.
\end{proof}

The Theorem \ref{resolution for flag} is proved.

\end{proof}

\begin{df}
The flag $\overline{V}_n\supset \overline{V}_{n-1}\supset\dots\supset \overline{V}_0$ together with
the map $\pi:\overline{V}_n\to V_n$ is called the {\it resolution of singularities of the flag
$V_n\supset V_{n-1}\supset\dots\supset V_0,$ respecting the functions $f_1,\dots,f_k.$}
\end{df}



\subsection{Parshin's point, local parameters, and residue.}

Let $V_n$ be an algebraic (analytic) variety of dimension $n.$ Let $V_n\supset\dots\supset V_0$ be
a flag of subvarieties of dimension $dim V_k=k$ in $V_n.$

Consider the following diagram:

\begin{equation}\label{NormDiag}
\begin{CD}
V_n             @.\supset @. V_{n-1}                     @.\supset @.  \dots @.\supset @. V_1                            @.\supset @. V_0\\
@AAp_nA              @.      @AAp_nA                          @.        @.       @.             @.                           @.       @. \\
\widetilde{V}_n @.\supset @. W_{n-1}                     @.        @.       @.         @.                                @.        @.    \\
@.                   @.      @AAp_{n-1}A                      @.        @.       @.             @.                           @.       @. \\
                @.        @. \widetilde{W}_{n-1}         @.\supset @. \dots @.        @.                                @.        @.   \\
@.                   @.      @.                               @.        @.       @.             @.                           @.       @. \\
                @.        @.                             @.        @. \dots @.\supset @. W_1                            @.        @.   \\
@.                   @.      @.                               @.        @.       @.       @AAp_{1}A                          @.       @. \\
                @.        @.                             @.        @.       @.        @. \widetilde{W}_1                @.\supset @.  W_0
\end{CD}
\end{equation}

\noindent where
\begin{enumerate}
\item $p_n:\widetilde{V}_n\to V_n$ is the normalization;
\item $W_{n-1}\subset\widetilde{V}_n$ is the general preimage of $V_{n-1}$ under $p_n;$
\item for every $k=1,2\dots,n-1$
\begin{enumerate}
\item $p_k:\widetilde{W_k}\to W_k$ is the normalization;
\item $W_{k-1}\subset\widetilde{W}_k$ is the general preimage of $V_{k-1}$ under $p_n\circ\dots\circ p_{k}.$
\end{enumerate}
\end{enumerate}

\begin{df}
We call the diagram \ref{NormDiag} the {\it normalization diagram} of the flag
$V_n\supset\dots\supset V_0.$
\end{df}

\begin{df}
The flag $V_n\supset\dots\supset V_0$ of irreducible subvarieties together with a choice of a point
$a_{\alpha}\in W_0$ is called a {\it Parshin's point}.
\end{df}

\noindent{\bf Remark.} Consider $\{V_n\supset\dots\supset V_0,a_{\alpha}\in W_0\}$ where $V_i$'s
are not necessary irreducible. The choice of $a_{\alpha}$ provides the irreducible components of
$V_i$'s. Therefore, $\{V_n\supset\dots\supset V_0,a_{\alpha}\in W_0\}$ defines a Parshin's point
$\{V'_n\supset\dots\supset V'_0,a_{\alpha}\in W'_0\},$ where $V'_i$ are the corresponding
irreducible components and $W'_0\subset W_0.$ (Note that the diagram (\ref{NormDiag}) for the
irreducible flag $V'_n\supset\dots\supset V'_0$ consists of irreducible components of the elements
of the diagram (\ref{NormDiag}) for the original flag $V_n\supset\dots\supset V_0$.) However, if we
wouldn't require irreducibility of $V_i$'s, $\{V_n\supset\dots\supset V_0,a_{\alpha}\}$ and
$\{V'_n\supset\dots\supset V'_0,a_{\alpha}\}$ would formally be different Parshin's points, which
is wrong. Sometimes we'll say Parshin's point $\{V_n\supset\dots\supset V_0,a_{\alpha}\}$ without
assuming that $V_i$'s are irreducible, by using the convention $\{V_n\supset\dots\supset
V_0,a_{\alpha}\}=\{V'_n\supset\dots\supset V'_0,a_{\alpha}\}.$

$W_{i-1}\subset \widetilde{W}_i$ is a hypersurface in a normal variety. It follows that there exist
a (meromorphic) function $u_i$ on $\widetilde{W}_i$ which has zero of order $1$ at a generic point
of $W_{i-1}.$ Since meromorphic functions are the same on $W_i$ and $\widetilde{W}_i,$ one can
consider $u_i$ as a function on $W_i.$ Then one can continue $u_i$ to $\widetilde{W}_{i+1}$ and so
on. For simplicity, we denote all these functions by $u_i.$ Now $u_i$ is defined on $V_n,$ and
$W_j$ and $\widetilde{W}_j$ for $j\ge i.$

\begin{df}
Functions $(u_1,\dots,u_n)$ are called {\it local parameters}.
\end{df}

Let $\omega$ be a meromorphic $n$-form on $V_n.$ Since the differentials $du_1,\dots,du_n$ are
linearly independent at a generic point of $V_n$, one can write

$$
\omega=fdu_1\wedge\dots\wedge du_n,
$$

\noindent where $f$ is a meromorphic function on $V_n.$

Furthermore, $f$ can be expand into a power series in $u_n$ with coefficients in meromorphic
functions on $W_{n-1}.$ Let $f_{-1}$ be the coefficient at $u_n^{-1}$ in this expansion. Then
$\omega_{n-1}=f_{-1}du_1\wedge\dots\wedge du_{n-1}$ is a meromorphic form on $W_{n-1}.$ Continuing
in the same way, we get a function $\omega_0$ on the finite set $W_0.$

\begin{df}
The residue of $\omega$ at the Parshin's point $P=\{V_n\supset\dots\supset V_0,a_{\alpha}\in W_0\}$
is $res_P(\omega):=\omega_0(a_{\alpha}).$
\end{df}

\subsection{Toric Parameters and Toric Neighborhoods.}

Consider the following diagram:
$$
\begin{CD}
\overline{V}_n  @.\supset @. \overline{V}_{n-1}          @.\supset @.  \dots @.\supset @. \overline{V}_1                 @.\supset @. \overline{V}_0\\
@VV\pi V              @.      @VV\pi V                          @.        @.       @.       @VV\pi V                           @.       @VV\pi V \\
V_n             @.\supset @. V_{n-1}                     @.\supset @.  \dots @.\supset @. V_1                            @.\supset @. V_0=\{a\}\\
@AAp_nA              @.      @AAp_nA                          @.        @.       @.             @.                           @.       @. \\
\widetilde{V}_n @.\supset @. W_{n-1}                     @.        @.       @.         @.                                @.        @.    \\
@.                   @.      @AAp_{n-1}A                      @.        @.       @.             @.                           @.       @. \\
                @.        @. \widetilde{W}_{n-1}         @.\supset @. \dots @.        @.                                @.        @.   \\
@.                   @.      @.                               @.        @.       @.             @.                           @.       @. \\
                @.        @.                             @.        @. \dots @.\supset @. W_1                            @.        @.   \\
@.                   @.      @.                               @.        @.       @.       @AAp_{1}A                          @.       @. \\
                @.        @.                             @.        @.       @.        @. \widetilde{W}_1                @.\supset @.  W_0
\end{CD}
$$
\noindent where the first line is the resolution of singularities of the flag
$V_n\supset\dots\supset V_0$ and the rest is the normalization diagram.

The following Lemma follows immediately from the Theorem \ref{degree one covering}:

\begin{lem}\label{birational uniqueness of resolution}
For every $k=0,\dots,n-1$ there exist a natural degree one branched covering
$\varphi_k:\overline{V}_k\to\widetilde{W}_k,$ such that all the diagram commutes. In particular,
$\varphi_0:\overline{V}_0\to\widetilde{W}_0=W_0$ is a bijection of finite sets.
\end{lem}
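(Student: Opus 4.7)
The plan is to prove the lemma by downward induction on $k$, building $\varphi_k$ from $\varphi_{k+1}$ by combining Theorem \ref{degree one covering} with the universal property of the normalization. The key tool throughout is the composition identity for generic preimages, $(H_f)_g = H_{f\circ g}$, recorded earlier in the text.

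For the base case $k=n-1$, I would apply Theorem \ref{degree one covering} to the degree one branched covering $\pi:\overline{V}_n\to V_n$ (which holds by Theorem \ref{resolution for flag}, with $\overline{V}_n$ smooth and hence normal) and to the hypersurface $V_{n-1}\subset V_n$. The source of the resulting map is $(V_{n-1})_{\pi}=\overline{V}_{n-1}$ by construction of the resolution, and the target is $(V_{n-1})_{p_n}=W_{n-1}$ by construction of the normalization diagram. Since $\overline{V}_{n-1}$ is smooth, the universal property of $p_{n-1}:\widetilde{W}_{n-1}\to W_{n-1}$ lets me lift this map to $\varphi_{n-1}:\overline{V}_{n-1}\to\widetilde{W}_{n-1}$, which is again a degree one branched covering (degrees multiply across the factorization, and each other defining property of a branched covering passes to the lift).

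For the inductive step, assume $\varphi_{k+1}:\overline{V}_{k+1}\to\widetilde{W}_{k+1}$ is a degree one branched covering making the diagram commute, i.e.\ $\pi|_{\overline{V}_{k+1}} = (p_n\circ p_{n-1}\circ\cdots\circ p_{k+1})\circ\varphi_{k+1}$. Applying the composition identity to the hypersurface $V_k\subset V_{k+1}$ yields
$$\overline{V}_k = (V_k)_{\pi|_{\overline{V}_{k+1}}} = \bigl((V_k)_{p_n\circ\cdots\circ p_{k+1}}\bigr)_{\varphi_{k+1}} = (W_k)_{\varphi_{k+1}},$$
where the last equality is the definition of $W_k\subset\widetilde{W}_{k+1}$ in the normalization diagram. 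Since $\widetilde{W}_{k+1}$ is already normal, Theorem \ref{degree one covering} applied to $\varphi_{k+1}$ with hypersurface $W_k$ gives directly a degree one branched covering $\overline{V}_k=(W_k)_{\varphi_{k+1}}\to W_k$. Lifting through $p_k:\widetilde{W}_k\to W_k$ (again by universality, since $\overline{V}_k$ is smooth) produces $\varphi_k:\overline{V}_k\to\widetilde{W}_k$, still a degree one branched covering; commutativity of the diagram at level $k$ is automatic from the construction, closing the induction.

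At $k=0$, $\overline{V}_0$ and $\widetilde{W}_0=W_0$ are finite sets of points (the case $\widetilde{W}_0=W_0$ just means the lift through the normalization is trivial), and a degree one branched covering between finite sets must be a bijection: being a branched covering it is surjective, and the generic-point local-isomorphism condition forces injectivity pointwise in dimension zero. The only real subtlety in the argument is the bookkeeping of generic preimages through the chain of normalizations in the diagram, that is, verifying at each step that the iterated generic preimage $\overline{V}_k$ produced by the resolution really agrees with the generic preimage $(W_k)_{\varphi_{k+1}}$ obtained by running through the normalization diagram; this is handled uniformly by the identity $(H_f)_g = H_{f\circ g}$, which reduces the rest to a routine iteration of Theorem \ref{degree one covering} and the universal property of the normalization.
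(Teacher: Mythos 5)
Your proof is correct and follows the route the paper intends: the paper states the lemma as an immediate corollary of Theorem~\ref{degree one covering} without writing out details, and your downward induction — applying that theorem to $\varphi_{k+1}$, identifying $\overline{V}_k=(W_k)_{\varphi_{k+1}}$ via the composition identity $(H_f)_g=H_{f\circ g}$, and then lifting through $p_k$ by the universal property of normalization (using that $\overline{V}_k$ is smooth, hence normal) — is precisely the elaboration the paper leaves implicit.
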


Let $(u_1,\dots,u_n)$ be local parameters. Let $b_{\alpha}\in \overline{V}_0.$ According to the
Theorem \ref{resolution for flag} there exist good coordinates $(x_1,\dots,x_n)$ near $b_{\alpha},$
such that $(v_1,\dots,v_n):=(u_1\circ\pi,\dots,u_n\circ\pi)$ are almost monomial near $b_{\alpha}.$
Let $v_k=x_1^{d_{k1}}\dots x_n^{d_{kn}}\phi_k,$ for $k=1,\dots,n,$ where $\phi_1,\dots,\phi_n$ are
non-zero holomorphic functions in a neighborhood of $b_{\alpha}.$

\begin{lem}\label{local param. -> gen. local param.}
$d_{ij}=0$ for $i<j$ and $d_{ii}=1$ for $i=1,\dots,n.$
\end{lem}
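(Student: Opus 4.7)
The strategy is to use the commutative diagram from Lemma \ref{birational uniqueness of resolution} to transfer the defining property of $u_k$---that it has a zero of order exactly one at a generic point of $W_{k-1}$ inside $\widetilde{W}_k$---onto the resolution, and then to read off the exponents $d_{ij}$ from the almost monomial form near $b_\alpha.$

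First, I would observe that via the degree one branched covering $\varphi_k\colon \overline{V}_k \to \widetilde{W}_k$ (supplied by Lemma \ref{birational uniqueness of resolution}, and degree one by Theorem \ref{degree one covering}), the meromorphic function $v_k|_{\overline{V}_k}$ coincides with the pullback $\varphi_k^*(u_k|_{\widetilde{W}_k}).$ Birational morphisms between normal varieties preserve multiplicities of zeros and poles at generic points of codimension one subvarieties, so $v_k|_{\overline{V}_k}$ is a meromorphic function on $\overline{V}_k$ which is neither identically zero nor identically infinite on the component through $b_\alpha,$ and which has a zero of order exactly one at a generic point of $\overline{V}_{k-1}.$

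Next, I would plug in the almost monomial form $v_k = x_1^{d_{k1}}\cdots x_n^{d_{kn}}\phi_k$ and restrict to the coordinate subspace $\overline{V}_k = \{x_{k+1}=\cdots=x_n=0\}.$ If some exponent $d_{kj}$ with $j>k$ were positive, the restriction would vanish identically on a neighborhood of $b_\alpha$ in $\overline{V}_k,$ contradicting the previous paragraph; if some such $d_{kj}$ were negative, $v_k$ would have a pole along $\{x_j=0\}\supset \overline{V}_k,$ so the restriction would fail to be a well-defined meromorphic function (with the pole divisor containing $\overline{V}_{k-1}$), again a contradiction. Hence $d_{kj}=0$ for every $j>k,$ which is the asserted upper-triangular shape $d_{ij}=0$ for $i<j.$

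Finally, with the monomial already reduced to $x_1^{d_{k1}}\cdots x_k^{d_{kk}},$ the restriction reads $v_k|_{\overline{V}_k} = x_1^{d_{k1}}\cdots x_k^{d_{kk}}(\phi_k|_{\overline{V}_k})$ with $\phi_k|_{\overline{V}_k}$ a nowhere vanishing holomorphic function. Since $(x_1,\dots,x_{k-1})$ restrict to local coordinates along $\overline{V}_{k-1}$ inside $\overline{V}_k,$ the factor $x_1^{d_{k1}}\cdots x_{k-1}^{d_{k,k-1}}$ is generically finite and nonzero at points of $\overline{V}_{k-1}.$ Therefore the order of vanishing of $v_k|_{\overline{V}_k}$ along $\overline{V}_{k-1}=\{x_k=0\}\cap \overline{V}_k$ is exactly $d_{kk},$ which by the first paragraph equals $1.$ The only delicate point is the multiplicity-preservation across $\varphi_k,$ but this is immediate from the fact that $\varphi_k$ is a degree one branched covering with smooth source and normal target, so no real obstacle arises.
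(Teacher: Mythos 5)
Your proof is correct and follows the same route as the paper's: transfer the order-one vanishing of $u_k$ along $W_{k-1}$ through the degree-one branched covering $\varphi_k$ to $v_k$ along $\overline{V}_{k-1}$, then read off the exponents from the almost-monomial form. The one point you gesture at rather than justify---that $\varphi_k$ is a local isomorphism at a generic point of $\overline{V}_{k-1}$---is exactly what the paper's Lemma~\ref{codimension} (applied to $\varphi_k$ over the regular locus of $\widetilde{W}_k$) supplies; conversely, you spell out the final deduction of $d_{ij}=0$ for $i<j$ and $d_{ii}=1$, which the paper leaves implicit.
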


\begin{proof}
According to the definition of the local parameters, $u_m$ can be restricted consequently from
$\widetilde{V}_n$ to $W_{n-1},$ from $\widetilde{W}_{n-1}$ to $W_{n-2}$ and so on down to $W_m$ and
all the restrictions are not identical zeros. Finally, on $\widetilde{W}_m,$ $u_m$ has zero of
order $1$ at a general point of $W_{m-1}.$

$\varphi_k:\overline{V}_k\to\widetilde{W}_k$ is a local isomorphism at a generic point of
$\overline{V}_{k-1}$ (follows from Lemma \ref{codimension} applied to
$\varphi_k|_{\varphi_k^{-1}(reg(\widetilde{W}_k))}$). Therefore, $v_k$ restricts to a meromorphic
function on $\overline{V_k}$ with zero of order $1$ at a generic point of $\overline{V}_{k-1}.$
\end{proof}

\begin{df}
The matrix $D:=\{d_{ij}\}$ is called the {\it valuation matrix} of the almost monomial functions
$(v_1,\dots,v_n)$ with respect to $(x_1,\dots,x_n).$
\end{df}

\begin{df}
A set $(y_1,\dots,y_n)$ of almost monomial functions in a neighborhood of
$b_{\alpha}\in\overline{V}_n$ with respect to the coordinates $(x_1,\dots,x_n),$ such that the
corresponding valuation matrix $D$ is lower-diagonal with units on diagonal is called {\it
generalized local parameters.}
\end{df}

Lemma \ref{local param. -> gen. local param.} basically says that if $(u_1,\dots, u_n)$ are local
parameters and the resolution $\pi:\overline{V}_n\to V_n$ is such that
$(v_1,\dots,v_n):=(u_1\circ\pi,\dots,u_n\circ\pi)$ are almost monomial near $b_{\alpha}$ then
$(v_1,\dots,v_n)$ are generalized local parameters. However, later we'll have to use generalized
local parameters of more general type.

We have the following simple lemma about the almost monomial functions:

\begin{lem}
Let $(y_1,\dots,y_n)$ be almost monomial functions with respect to the good coordinates
$(x_1,\dots,x_n)$ in a neighborhood of $b_{\alpha}.$ Let $D=\{d_{ij}\}$ be the valuation matrix.
Let $C=\{c_{ij}\}=D^{-1},$ and let
$$
\begin{array}{l}
z_1=y_1^{c_{11}}\dots y_1^{c_{1n}};\\
z_2=y_1^{c_{21}}\dots y_1^{c_{2n}};\\
\vdots \\
z_n=y_1^{c_{n1}}\dots y_n^{c_{nn}}.
\end{array}
$$
Then $(z_1,\dots,z_n)$ are good coordinates in a neighborhood of $b_{\alpha}$ as well. (If $|det
D|\neq 1$ then $C$ has rational entries. However, $z_1,\dots,z_n$ don't have any branching near
$b_{\alpha},$ so one should just choose one branch for each $z_i$.)
\end{lem}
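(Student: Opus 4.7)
The plan is to verify directly that $(z_1,\dots,z_n)$ is a local holomorphic coordinate system at $b_{\alpha}$, and then to check the two defining properties of good coordinates. The key computation is that $CD=I$ collapses the monomial parts of the $y_k$'s back to the original $x_i$'s, so each $z_i$ turns out to be $x_i$ times a nonvanishing holomorphic factor.

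First I would write each $y_k = x_1^{d_{k1}}\cdots x_n^{d_{kn}}\phi_k$ with $\phi_k$ holomorphic and nonvanishing near $b_{\alpha}$, and substitute into the definition of $z_i$. A direct reordering of the product gives
\[
z_i \;=\; \prod_{k=1}^n y_k^{c_{ik}} \;=\; \prod_{j=1}^n x_j^{\sum_k c_{ik}d_{kj}}\cdot \prod_{k=1}^n \phi_k^{c_{ik}} \;=\; x_i\cdot \psi_i,
\]
where I used $\sum_k c_{ik}d_{kj}=(CD)_{ij}=\delta_{ij}$ and set $\psi_i:=\prod_k\phi_k^{c_{ik}}$. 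Here I need to address the branching issue noted in the statement: although the $c_{ik}$ may be rational, each $\phi_k$ is nonvanishing in a contractible neighborhood of $b_{\alpha}$, so one can choose a single-valued branch of $\phi_k^{c_{ik}}$ there, making $\psi_i$ a well-defined holomorphic function that does not vanish at $b_{\alpha}$.

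Next I would apply the inverse function theorem. Since $b_{\alpha}$ corresponds to $x_1=\dots=x_n=0$ in the good coordinates $(x_1,\dots,x_n)$, one has $x_i(b_{\alpha})=0$, and the product rule gives
\[
\frac{\partial z_i}{\partial x_j}(b_{\alpha}) \;=\; \psi_i(b_{\alpha})\,\delta_{ij} + x_i(b_{\alpha})\frac{\partial \psi_i}{\partial x_j}(b_{\alpha}) \;=\; \psi_i(b_{\alpha})\,\delta_{ij}.
\]
The Jacobian matrix is therefore diagonal with nonzero entries, so $(z_1,\dots,z_n)$ provides a local analytic chart at $b_{\alpha}$.

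Finally I would verify the two defining conditions of good coordinates from Theorem \ref{resolution for flag}. Since $\psi_i$ is nonvanishing, $\{z_i=0\}=\{x_i=0\}$ as germs at $b_{\alpha}$, so the flag condition $\overline{V}_j\cap U=\{z_n=\dots=z_{j+1}=0\}$ is inherited from the corresponding condition for the $x$'s. For the almost-monomial condition, if $f=x_1^{e_1}\cdots x_n^{e_n}\eta$ is almost monomial in the $x$'s, then substituting $x_i=z_i\psi_i^{-1}$ (valid since $\psi_i$ is nonzero holomorphic) gives $f=z_1^{e_1}\cdots z_n^{e_n}\cdot\eta\cdot\prod_i\psi_i^{-e_i}$, which is almost monomial in the $z$'s. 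The only mildly delicate step is the branching argument for $\psi_i$; everything else is a straightforward application of $CD=I$ and the inverse function theorem.
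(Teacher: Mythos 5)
Your proof is correct and follows essentially the same route as the paper: observe that $CD=I$ forces $z_i = x_i\psi_i$ with $\psi_i$ holomorphic and nonvanishing, apply the inverse function theorem (using that the Jacobian at $b_\alpha$ is diagonal with nonzero entries), and note that $\{z_i=0\}=\{x_i=0\}$ so the flag condition is preserved. You fill in a few details the paper elides — the explicit collapse of exponents via $CD=I$, the branch choice for $\psi_i$, and a check that almost-monomiality of the $\pi^*f_j$'s is preserved — but the core argument is identical.
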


\begin{proof}
Indeed, the valuation matrix of $(z_1,\dots,z_n)$ is the identity matrix, so
$$
\begin{array}{l}
z_1=x_1h_1;\\
z_2=x_2h_2;\\
\vdots \\
z_n=x_nh_n,
\end{array}
$$
\noindent where $h_1,\dots,h_n$ are holomorphic non-zero functions in a neighborhood of
$b_{\alpha}.$ Then
$$
\frac{\partial z_i}{\partial x_j}|_{b_{\alpha}}=\delta_{ij}h_1(b_{\alpha}),
$$
\noindent so,
$$
|J(b_{\alpha})|=|\{\frac{\partial z_i}{\partial x_j}\}|=h_1(b_{\alpha})\cdot\dots\cdot
h_n(b_{\alpha})\neq 0.
$$
\noindent Therefore, by the Inverse Function Theory, $(z_1,\dots,z_n)$ are coordinates in a
neighborhood of $b_{\alpha}.$ Also, $\{z_i=0\}=\{x_i=0\}$ in a neighborhood of $b_{\alpha}$ for any
$i=1,\dots,n.$ So, $(z_1,\dots,z_n)$ are good coordinates.
\end{proof}

Let $(y_1,\dots,y_n)$ be generalized local parameters and let $D=\{d_{ij}\}$ be the corresponding
valuation matrix. Let $\hat{L}^n$ be the lattice of monomials in $y_1,\dots, y_n$ endowed with the
lexicographic order with respect to the basis $(y_1,\dots,y_n)$ (first with respect to $y_n,$ then
$y_{n-1}$ and so on). Consider the flag $\hat{L}^n\supset \hat{L}^{n-1}\supset\dots\supset
\hat{L}^0$ of isolated subgroups in $\hat{L}^n.$ Let ${\cal \hat{C}}$ be the corresponding system
of cones, $\hat{L}$ be the corresponding semigroup, and ${\cal \hat{T}}$ be the corresponding
system of toric varieties. Note, that all the toric varieties in ${\cal \hat{T}}$ are normal and,
moreover, isomorphic to $\mathbb C^n.$

Note that the valuation matrix $D$ and its inverse $C$ are lower-triangular integer matrix with
units on the diagonal in this case. Therefore,
$$
\begin{array}{l}
z_1=y_1^{c_{11}}\dots y_1^{c_{1n}};\\
z_2=y_1^{c_{21}}\dots y_1^{c_{2n}};\\
\vdots \\
z_n=y_1^{c_{n1}}\dots y_n^{c_{nn}},
\end{array}
$$
\noindent are standard coordinates on a variety $\hat{T}_z\in {\cal \hat{T}}.$

Therefore, $(z_1,\dots,z_n)$ provides an isomorphism $\psi^{-1}:W\to \hat{U}$ between a
neighborhood $W$ of $b_{\alpha}\in\overline{V}_n$ and a neighborhood $\hat{U}$ of the origin in
$\hat{T}_z.$ Moreover, $\psi(\hat{U}^k)\subset\overline{V}_k,$ where $\hat{U}^k=\hat{T}^k\cap
\hat{U},$ and $\psi(\hat{U}^n)=W\backslash \{z_1\cdot\dots\cdot z_n=0\}.$

Let $\hat{\phi}:=\pi\circ\psi:\hat{U}\to V_n.$ We proved the following

\begin{te}\label{non-toric neighborhood}
The map $\hat{\phi}$ has the following properties:
\begin{enumerate}
\item $\hat{\phi}(\hat{U}^k)\subset V_k$ for $k=0,\dots,n$;
\item $\hat{\phi}|_{\hat{U}^n}$ is an isomorphism to the image;
\item for any hypersurface $H\subset V^n$ one can choose the resolution $\pi:\overline{V}_n\to V_n$
in such a way, that $H\cap \hat{\phi}(\hat{U}^n)=\emptyset.$
\end{enumerate}
\end{te}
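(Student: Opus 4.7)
The three claims are essentially a repackaging of what has already been set up, so the plan is to glue them together carefully. For (1), I would observe that by the construction of $\psi^{-1}$ we already have $\psi(\hat U^k)\subset \overline V_k$, and by the definition of the resolution of singularities of the flag (Theorem \ref{resolution for flag}), $\pi(\overline V_k)\subset V_k$. Composing, $\hat\phi(\hat U^k)=\pi(\psi(\hat U^k))\subset V_k$ at once.

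For (2), the plan is to identify $\psi(\hat U^n)$ with the complement of all coordinate hyperplanes $\{x_i=0\}$ in $W$ and to show that $\pi$ is an isomorphism on this locus. The key observation is that $b_\alpha\in\overline V_0=D_{n-1}\cap\dots\cap D_0$ (from the lemma in the proof of Theorem \ref{resolution for flag}), so $b_\alpha$ lies on at least $n$ distinct exceptional hypersurfaces of $\pi$. By the normal crossing property together with good coordinates, the exceptional hypersurfaces through $b_\alpha$ must be exactly the $n$ coordinate hyperplanes $\{x_1=0\},\dots,\{x_n=0\}$ (there cannot be more than $n$ transverse smooth hypersurfaces through a single point of a smooth $n$-fold). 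Therefore $\psi(\hat U^n)=W\setminus\{x_1\cdots x_n=0\}$ sits inside the open locus where $\pi$ is an isomorphism onto its image (item 2 of Theorem \ref{resolution}), and composing with the isomorphism $\psi$ gives (2).

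For (3), I would enlarge the data of the resolution to absorb $H$. Concretely, choose any meromorphic function $f$ on $V_n$ whose divisor contains $H$, and reapply Theorem \ref{resolution for flag} to the flag $V_n\supset\dots\supset V_0$ with the enlarged list $f_1,\dots,f_k,f$ of functions (equivalently, add $H$ to the subvarieties used as centers in the underlying application of Theorem \ref{resolution}). The resulting $\pi$ makes $\pi^*f$ almost monomial in the good coordinates near $b_\alpha$, which forces $\pi^{-1}(H)$ to be a union of components of $\{x_1\cdots x_n=0\}$ near $b_\alpha$. Hence $\psi(\hat U^n)=\{x_1\cdots x_n\neq 0\}\cap W$ is disjoint from $\pi^{-1}(H)$, and therefore $\hat\phi(\hat U^n)=\pi(\psi(\hat U^n))$ is disjoint from $H$.

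The only step needing a bit of care is the identification in (2) of the exceptional hypersurfaces through $b_\alpha$ with the coordinate hyperplanes: this combines the inclusion $b_\alpha\in D_{n-1}\cap\dots\cap D_0$ (at least $n$ such hypersurfaces pass through $b_\alpha$) with the dimensional constraint coming from normal crossings (at most $n$ can pass through any one point), so the count is exactly $n$ and they agree with the coordinate hyperplanes in the good coordinate system furnished by Theorem \ref{resolution for flag}. Everything else is bookkeeping.
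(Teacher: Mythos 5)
Your proof is correct and follows exactly the route the paper has already laid out: the paper never writes a separate proof of this theorem — the sentence immediately before it ("We proved the following") points to the construction of $\psi$, the inclusions $\psi(\hat U^k)\subset\overline V_k$, the identity $\psi(\hat U^n)=W\setminus\{z_1\cdots z_n=0\}$, and the facts that $\pi$ comes from Theorem \ref{resolution for flag} and restricts to an isomorphism off the exceptional divisor — and your write-up is just an explicit unpacking of those ingredients (for (3), enlarging the data handed to Theorem \ref{resolution for flag} is the intended mechanism, and since Theorem \ref{resolution} takes arbitrary closed subvarieties $Y_k$, one can add $H$ directly rather than via a defining function, which sidesteps the issue that $H$ need not be a principal divisor). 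The one point worth flagging is that your identification of the $n$ exceptional hypersurfaces through $b_\alpha$ with the coordinate hyperplanes $\{x_i=0\}$ relies on the convention, implicit in "good coordinates" coming from a simple-normal-crossings resolution, that the SNC divisor through the point is a union of coordinate hyperplanes; the counting argument (at least $n$ from $b_\alpha\in D_{n-1}\cap\dots\cap D_0$, at most $n$ by normal crossings) is right, but by itself it only shows the consecutive intersections $\bigcap_{j\ge m}D_j$ are the coordinate subspaces, not that each individual $D_j$ is a coordinate hyperplane — that last step is where the SNC coordinate convention is being used.
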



\setcounter{cor}{0}

\begin{cor}
Let $f$ be a meromorphic function in a neighborhood of $a\in V^n.$ One can choose the variety
$\hat{T}\in {\cal \hat{T}},$ neighborhood of the origin $\hat{U}\subset \hat{T},$ and the map
$\hat{\phi}: \hat{U}\to V^n$ in such a way that there exist a power series $f_v\in F(\hat{L})$
converging to $f\circ\hat{\phi}$ in $\hat{U}^n.$ In other words, $f$ expands into a power series in
$(v_1,\dots,v_n)$ normally converging to $f$ in $\hat{\phi}(\hat{U}^n),$ and the Newton's
polyhedron of this power series belong to a cone from ${\cal \hat{C}},$ shifted by an integer
vector.
\end{cor}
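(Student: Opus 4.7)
The plan is to feed the meromorphic function $f$ into the resolution data and then translate the resulting almost monomial expression into the toric language developed earlier in the paper.

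First I would fix local parameters $(u_1,\dots,u_n)$ for the Parshin's point at $a$ and apply Theorem \ref{resolution for flag} to the flag $V_n\supset\dots\supset V_0$ with respect to the collection of meromorphic functions $\{u_1,\dots,u_n,f\}$. This yields a resolution $\pi:\overline{V}_n\to V_n$ and good coordinates $(x_1,\dots,x_n)$ near the chosen point $b_\alpha\in\overline{V}_0$ in which the pullbacks $v_i:=u_i\circ\pi$ and $f\circ\pi$ are simultaneously almost monomial. By Lemma \ref{local param. -> gen. local param.}, the $v_i$'s are generalized local parameters, so the associated valuation matrix $D$ is integer, lower-triangular, and has units on the diagonal; in particular its inverse $C=\{c_{ij}\}=D^{-1}$ has the same shape.

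Next I would run the construction that precedes Theorem \ref{non-toric neighborhood}: the monomials $z_i:=v_1^{c_{i1}}\cdots v_n^{c_{in}}$ are standard coordinates on a toric variety $\hat{T}_z\in\hat{\mathcal T}$ and provide an isomorphism $\psi:\hat{U}\to W$ from a neighborhood of the origin in $\hat{T}_z$ to a neighborhood of $b_\alpha$, so that $\hat\phi:=\pi\circ\psi$ is the desired map. As in the good-coordinate lemma just before Theorem \ref{non-toric neighborhood}, the transition from $(x_1,\dots,x_n)$ to $(z_1,\dots,z_n)$ is of the form $z_i=x_i\,\tilde h_i$ with $\tilde h_i$ holomorphic and non-vanishing at the origin. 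Consequently the almost monomial expression $f\circ\pi=x_1^{e_1}\cdots x_n^{e_n}\,g(x)$, with $g$ holomorphic and non-vanishing, transforms into $f\circ\hat\phi=z_1^{e_1}\cdots z_n^{e_n}\,H(z)$, with $H$ holomorphic and non-vanishing near the origin of $\hat T_z$.

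Finally I would expand $H$ as a Taylor series $H=\sum_{k\in\mathbb Z_{\ge 0}^n} a_k z^k$ converging normally in a polydisk around the origin of $\hat T_z$, and reinterpret this expansion in $\hat L^n$ via the integer unimodular identification of monomials in $(z_1,\dots,z_n)$ with monomials in $(v_1,\dots,v_n)$. The result $f_v=\sum_{p\in\hat L^n}f_p\,p$ has its Newton polyhedron contained in $\hat C+(e_1,\dots,e_n)$, where $\hat C\in\hat{\mathcal C}$ is the simple cone with standard generators $z_1,\dots,z_n$; the series converges normally on $\hat\phi(\hat U^n)$ and in particular at points of the torus $T^n$. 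Both defining conditions of $F(\hat L)$ are met, so $f_v\in F(\hat L)$ is the required expansion. The only delicate point is the bookkeeping among the three coordinate systems $(x_i)$, $(v_i)$, and $(z_i)$; the fact that generalized local parameters give a unimodular integer transition is what keeps the exponent structure of almost monomial functions integral and places the Newton polyhedron in a genuine shifted cone from $\hat{\mathcal C}$.
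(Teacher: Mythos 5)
Your argument is correct and follows the natural route suggested by the paper's construction: include $f$ among the functions respected by the resolution in Theorem \ref{resolution for flag}, then read the resulting almost monomial expression $f\circ\pi = x_1^{e_1}\cdots x_n^{e_n}\,g(x)$ through the coordinate change $(x_1,\dots,x_n)\mapsto(z_1,\dots,z_n)$ to obtain $f\circ\hat\phi = z_1^{e_1}\cdots z_n^{e_n}\,H(z)$ with $H$ holomorphic and non-vanishing, and finally interpret the Taylor expansion of $H$ as an element of $F(\hat L)$ supported in a shifted cone. The paper does not spell out a proof of this corollary, but what you wrote is exactly what the preceding construction yields. A few points you handled correctly that are worth making explicit: since the valuation matrix of $(z_i)$ with respect to $(x_i)$ is the identity, $\{z_i=0\}=\{x_i=0\}$ and $x_i=z_i k_i(z)$ with $k_i$ holomorphic non-vanishing, which is what makes the monomial exponents survive the coordinate change; and the fact that $C=D^{-1}$ is an integer unimodular matrix is what guarantees that the shift $z_1^{e_1}\cdots z_n^{e_n}$ is an actual element of $\hat L^n$, so both defining conditions of $F(\hat L)$ hold. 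One remark for completeness: Theorem \ref{resolution for flag} is stated for meromorphic functions on all of $V_n$, whereas $f$ is only given near $a$; the remedy (implicit in the paper's standing compactness convention) is to shrink $V_n$ to a relatively compact neighborhood of $a$ before resolving. Also, Lemma \ref{birational uniqueness of resolution} guarantees that after changing the resolution to accommodate $f$ the choice of $b_\alpha\in\overline V_0$ above the fixed $a_\alpha\in W_0$ is still canonical, so the construction does not depend on auxiliary choices beyond the statement allows.
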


\begin{cor}
Parameters $(v_1,\dots,v_n)$ induce the homomorphism $v^*:F(V^n)\to F(\hat{L})$ from the field of
meromorphic functions on $V^n$ to the field $F(\hat{L})$ of Laurent power series of $\hat{L}.$ In
particular, the field of meromorphic functions $F(V^n)$ is endowed with the valuation
$\nu_v=\nu_{F(\hat{L})}\circ v^*.$
\end{cor}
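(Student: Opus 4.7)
The plan is to define $v^{*}(f):=f_{v}$ where $f_{v}\in F(\hat{L})$ is the Laurent expansion furnished by Corollary 1, then verify that this assignment (i) is independent of the auxiliary choices made in Corollary 1, (ii) is a ring homomorphism, and (iii) is injective. Once (i)--(iii) are in place, the valuation statement $\nu_{v}=\nu_{F(\hat{L})}\circ v^{*}$ is just the observation that the composition of a field embedding with a valuation is a valuation.

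For (i), given two choices $(\hat{T},\hat{U},\hat{\phi})$ and $(\hat{T}',\hat{U}',\hat{\phi}')$ producing expansions $f_{v}$ and $f'_{v}$ respectively, I would invoke Lemma \ref{Cones containing finite sets of positive elements} and its corollaries to find a single cone $\hat{C}''\in\hat{\mathcal{C}}$ containing the cones that bound the supports of both $f_{v}$ and $f'_{v}$. Pulling both expansions up to the normalization of the associated toric variety (which is isomorphic to $\mathbb{C}^{n}$), they become Taylor series at the origin of the same holomorphic function $f\circ\hat{\phi}''$; by uniqueness of Taylor expansions in $\mathbb{C}^{n}$ the two series coincide and hence define the same element of $F(\hat{L})$.

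For (ii), given $f,g\in F(V^{n})$, I would combine Corollary 1 with Lemma \ref{Existance of a common cone for several functions} to choose one common $\hat{T}\in\hat{\mathcal{T}}$ on which the expansions of $f$, $g$, $f+g$, and $fg$ all converge. The operations in $F(\hat{L})$ are, by Theorem \ref{F(L) is a Field}, the usual termwise sum and Cauchy product of such convergent series, so $v^{*}(f+g)=v^{*}(f)+v^{*}(g)$ and $v^{*}(fg)=v^{*}(f)\cdot v^{*}(g)$. For (iii), if $v^{*}(f)=0$ then $f\circ\hat{\phi}$ vanishes on $\hat{U}^{n}$; by Theorem \ref{non-toric neighborhood}(2), $\hat{\phi}|_{\hat{U}^{n}}$ is an isomorphism onto its image, so $f$ vanishes on a nonempty open subset of the irreducible variety $V^{n}$, forcing $f=0$.

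The hard part is (i), since a priori different resolutions and different good coordinates could produce unrelated power series. The resolution comes from the injective system structure of $\hat{\mathcal{C}}$: any two expansions sit inside a common cone after enlargement, where classical uniqueness of holomorphic Taylor expansion on $\mathbb{C}^{n}$ applies. All the remaining verifications are formal consequences of the already-established algebraic structure on $F(\hat{L})$ and of Theorem \ref{non-toric neighborhood}.
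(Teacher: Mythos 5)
Your decomposition into well-definedness, homomorphism, injectivity, and the valuation observation is the right skeleton, and your injectivity argument via Theorem~\ref{non-toric neighborhood}(2) and irreducibility of $V^n$ is correct; the paper itself leaves this corollary as an immediate consequence of Theorem~\ref{non-toric neighborhood} and the machinery of Section~1, so a complete justification along your lines is in the right spirit. However, step (i) has a concrete gap. After enlarging both support cones to a common $\hat{C}''\in\hat{\cal C}$, you assert that $f_v$ and $f'_v$ become Taylor series ``of the same holomorphic function $f\circ\hat{\phi}''$'' on the normalization of $T_{C''}$, but no map $\hat{\phi}''$ has been produced. What you actually have is that $f_v$ converges to $f\circ\hat{\phi}$ on $\hat{U}^n$ and $f'_v$ to $f\circ\hat{\phi}'$ on $\hat{U}'^n$, where $\hat{\phi}$ and $\hat{\phi}'$ arise from possibly different resolutions and live on possibly different toric charts. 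Passing to the larger cone gives two a priori unrelated holomorphic functions $f\circ\hat{\phi}\circ\alpha$ and $f\circ\hat{\phi}'\circ\alpha'$ on $T^n_{C''}$ (with $\alpha,\alpha'$ the transition maps); Taylor uniqueness only applies after you know these coincide. The missing ingredient is the observation, built into the paper's construction, that the standard coordinates $(z_1,\dots,z_n)$ on $\hat{T}$ (and on $\hat{T}'$) are monomials in the fixed parameters $(v_1,\dots,v_n)$, hence intrinsic meromorphic functions on $V^n$ near the Parshin's point. This, combined with the fact that the images $\hat{\phi}(\hat{U}^n)$ and $\hat{\phi}'(\hat{U}'^n)$ are nonempty open subsets of $V^n$ meeting in a common punctured polydisc in those coordinates, canonically identifies the two charts on an overlap and makes your Taylor-uniqueness comparison legitimate. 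Note that step (ii) quietly relies on (i) as well: to add and multiply you need one $\hat{\phi}$ serving $f$, $g$, $fg$, $f+g$ simultaneously, which is exactly the well-definedness question again. Once (i) is repaired, (ii), (iii), and the valuation statement go through as you say.
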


\begin{cor}
Let $\omega$ be a meromorphic $n$-form on $V_n.$ Let $\omega=fdv_1\wedge\dots\wedge v_n.$ Then
$res_{(V^n\supset\dots\supset
V_0,a_{\alpha}=\phi_0(b_{\alpha}))}(\omega)=res(v^*(f)dv_1\wedge\dots\wedge dv_n).$
\end{cor}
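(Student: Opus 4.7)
The plan is to proceed by induction on $n$, exploiting the fact that both sides are defined by iterative extraction of coefficients along a flag.

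Base case ($n=1$): both residues reduce to the classical one-variable residue at $a_\alpha \in W_0$. The map $\pi$ factors through the normalization as $\pi = p_1 \circ \tilde\pi$; by Lemma \ref{codimension}, $\tilde\pi: \overline V_1 \to \widetilde V_1$ is an isomorphism in dimension one (there is no codimension-two locus that could contain ramification). Hence the Laurent expansion of $v^*(f)$ in $v_1$ at $b_\alpha$ is identified with the expansion of $f$ in $u_1$ at $a_\alpha$, and the coefficients of $v_1^{-1}$ and $u_1^{-1}$ agree.

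Inductive step: assume the corollary for $n-1$. I would expand $v^*(f) = \sum_{k \in \mathbb Z} g_k(v_1, \dots, v_{n-1})\, v_n^k$ in powers of $v_n$; each $g_k$ lies in $F(\hat L')$, where $\hat L'$ denotes the semigroup associated to the restricted flag $\hat L^{n-1} \supset \dots \supset \hat L^0$. A direct computation of the coefficient at $1 \in \hat L^n$ yields
$$res\bigl(v^*(f)\, dv_1 \wedge \dots \wedge dv_n\bigr) = res\bigl(g_{-1}\, dv_1 \wedge \dots \wedge dv_{n-1}\bigr),$$
the right-hand residue taken in $F(\hat L')$. On the Parshin side the recursive definition gives $res_P(\omega) = res_{P'}(\omega_{n-1})$, where $\omega_{n-1} = f_{-1}\, du_1 \wedge \dots \wedge du_{n-1}$ lives on $W_{n-1}$ and $P' = (V_{n-1} \supset \dots \supset V_0,\, a_\alpha)$.

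To close the induction I would identify $g_{-1}\, dv_1 \wedge \dots \wedge dv_{n-1}$ on $\overline V_{n-1}$ with $\varphi_{n-1}^* \omega_{n-1}$ and then apply the inductive hypothesis. Factoring $\pi = p_n \circ \tilde\pi$ and using Lemma \ref{birational uniqueness of resolution}, the restriction $\tilde\pi|_{\overline V_{n-1}}$ lands in $W_{n-1}$ and factors through $\varphi_{n-1}$. Since $\varphi_{n-1}$ is a degree-one branched covering, Lemma \ref{codimension} makes it an isomorphism off a set of codimension at least two; hence the operation ``coefficient of $v_n^{-1}$'' commutes with $\varphi_{n-1}^*$, giving $g_{-1} = \varphi_{n-1}^*(f_{-1})$. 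Together with $dv_i = \varphi_{n-1}^*(du_i)$ on $\overline V_{n-1}$, this is the required identification. The restriction of the resolution $\pi$ to $\overline V_{n-1}$, with good coordinates $(x_1, \dots, x_{n-1})$ inherited from those on $\overline V_n$ at $b_\alpha$, serves as a resolution of the flag $V_{n-1} \supset \dots \supset V_0$ respecting $f_{-1}$, so the inductive hypothesis applies and yields the desired equality $res(g_{-1}\, dv_1 \wedge \dots \wedge dv_{n-1}) = res_{P'}(\omega_{n-1}) = res_P(\omega)$.

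The main obstacle will be making the identification $g_{-1} = \varphi_{n-1}^*(f_{-1})$ fully rigorous: one must verify that the ``residue along the divisor $\{v_n = 0\}$'' on the toric side corresponds to the ``residue along the divisor $\{u_n = 0\}$'' on the Parshin side, and that the convergence properties required to place $g_{-1}$ in $F(\hat L')$ actually hold. Both points rest on Lemma \ref{codimension}: a degree-one branched covering between normal varieties cannot produce codimension-one ramification, so a simple pole pulls back to a simple pole of the same residue, computed by a one-variable contour integral in a small disk transverse to the divisor at a generic point.
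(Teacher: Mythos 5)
The paper gives no written proof for this corollary --- it is stated, together with its neighboring corollaries, as an immediate consequence of Theorem \ref{non-toric neighborhood}, Lemma \ref{birational uniqueness of resolution}, and the residue machinery of Section 1.5. So there is nothing to compare against directly; I will assess your argument on its own terms.

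Your inductive strategy is reasonable and it does track the way $res_P$ is defined (iterated coefficient extraction along the flag), but the crucial middle step is not established at the level of rigor the rest of the paper demands, and I think you underestimate how much work it takes. The claim ``the restriction of the resolution $\pi$ to $\overline V_{n-1}$, with good coordinates inherited from those on $\overline V_n$, serves as a resolution of the flag $V_{n-1}\supset\dots\supset V_0$ respecting $f_{-1}$'' is not automatic. The original resolution was built to make $f$ and the $u_i$ almost monomial; $f_{-1}$ is a \emph{different} function (the Poincar\'e residue of $f$ along $W_{n-1}$), and one has to argue separately that it is almost monomial in the restricted coordinates near $b_\alpha$. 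It is true, but the argument goes through the structure of $v^*(f)\in F(\hat L)$: because $v^*(f)$ is almost monomial, its Newton polyhedron sits in a shifted cone, and extracting the coefficient of $v_n^{-1}$ produces a series in $F(\hat L')$ whose leading part is again a monomial times a unit --- this is what makes $g_{-1}$ almost monomial on $\overline V_{n-1}$, which is the input the inductive hypothesis needs. You should also check that $\pi|_{\overline V_{n-1}}$ pulls back the lower local parameters $u_1,\dots,u_{n-1}$ in a way compatible with your inherited coordinates; this is where Lemma \ref{birational uniqueness of resolution} and the factorization through $\widetilde W_{n-1}$ are actually used, not merely invoked.

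The identification $g_{-1}=\varphi_{n-1}^*(f_{-1})$ is the right thing to prove and your appeal to Lemma \ref{codimension} is the right tool: since $\varphi_{n-1}$ is a degree-one branched covering between normal varieties, it is an isomorphism outside a set of codimension $\ge 2$, so the one-variable residue along $\{v_n=0\}$ at a generic point of $\overline V_{n-1}$ agrees with the residue along $W_{n-1}$ at the corresponding generic point, and two meromorphic functions on a normal variety agreeing off codimension $2$ are equal. But you should state this as a lemma rather than a remark, since it is the entire content of the induction step.

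There is a simpler route that avoids the inductive bookkeeping entirely and is likely what the paper has in mind: write $v^*(f)\,dv_1\wedge\dots\wedge dv_n = v^*(f)\,v_1\dots v_n\,\omega_{T^n}$ and apply Lemma \ref{residue as integral}, so that $res(v^*(f)\,dv_1\wedge\dots\wedge dv_n)=\frac{1}{(2\pi i)^n}\int_{\tau^n}v^*(f)\,dv_1\wedge\dots\wedge dv_n$. Since $\hat\phi|_{\hat U^n}$ is an isomorphism onto its image (Theorem \ref{non-toric neighborhood}), this equals the integral of $\omega$ over the image torus $\hat\phi(\tau^n)\subset V_n$. One then identifies the iterated coefficient extraction in the definition of $res_P$ with an iterated contour integral over a torus shrinking to the flag, by applying the one-variable residue formula and Fubini $n$ times using the nested good coordinates. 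This replaces your commutation-of-residue-with-pullback lemma by a single geometric observation about the torus, which is cleaner. Either way the key point is the same --- $\hat\phi$ (and hence the $\varphi_k$) are local isomorphisms at generic points of each $\overline V_k$, so no information is lost in the comparison --- but the integral version localizes the difficulty in one place instead of spreading it across an induction.
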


\begin{cor}
Let $(v'_1,\dots,v'_n)$ be another set of local parameters. Then the set of Laurent series
$(v^*(v'_1),\dots,v^*(v'_n))$ define the change of variables from ${\cal \hat{T}}$ to ${\cal
\hat{T}'}.$ Moreover, $v'^*=\psi\circ v^*,$ where $\psi:F(\hat{L})\to F(\hat{L}')$ is the
isomorphism given by the change of variables.
\end{cor}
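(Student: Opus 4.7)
The plan is to verify the two hypotheses of Theorem \ref{change of variables} for the tuple $(v^*(v'_1),\dots,v^*(v'_n))$ by choosing a single resolution adapted to both sets of parameters, and then to deduce the commutativity from the geometric meaning of the change of variables. First, I would apply Theorem \ref{resolution for flag} to the flag $V_n\supset\dots\supset V_0$ together with all the functions $v_i$ and $v'_j$, obtaining a resolution $\pi:\overline V_n\to V_n$ and good coordinates $(x_1,\dots,x_n)$ at some $b_\alpha\in\overline V_0$ above $a_\alpha$ in which both $(\pi^*v_i)$ and $(\pi^*v'_i)$ are almost monomial. Lemma \ref{local param. -> gen. local param.} then guarantees that each tuple is a set of generalized local parameters, so both valuation matrices are lower triangular with units on the diagonal. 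Consequently, the valuation of $v^*(v'_i)$ in the $v_j$-lattice is itself a monomial with the same triangular-unipotent structure, which means $\bigl(\nu(v^*(v'_1)),\dots,\nu(v^*(v'_n))\bigr)$ is the standard generating set of a cone in $\hat{\cal C}$. This checks condition~(1) of Theorem \ref{change of variables}.

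Next, I would verify condition~(2): every monomial $(v^*(v'_1))^{k_1}\cdots(v^*(v'_n))^{k_n}$ whose valuation lies in $\hat L$ must belong to $O(\hat L)$. This follows because each $v^*(v'_i)$ equals its valuation monomial times a non-vanishing holomorphic factor on the toric neighborhood supplied by Theorem \ref{non-toric neighborhood}: the product of such factors is again holomorphic and non-vanishing, while the product of the valuation monomials lies in $\hat L$ by hypothesis, so the Laurent support of the resulting series sits in the positive semigroup. Theorem \ref{change of variables} then produces $\hat T\in\hat{\cal T}$, $\hat T'\in\hat{\cal T}'$, an analytic isomorphism $G$ between neighborhoods of their origins, and the induced field isomorphism $\psi:F(\hat L)\to F(\hat L')$.

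Finally, for the equality $v'^*=\psi\circ v^*$, both maps send a meromorphic germ $f$ at $a_\alpha$ to the Laurent expansion of $f\circ\hat\phi$ (respectively $f\circ\hat\phi'$) in the appropriate standard toric coordinates, and by construction $\hat\phi$ and $\hat\phi'$ factor through the common resolution $\pi$ near $b_\alpha$. The isomorphism $G$ from Theorem \ref{change of variables} is precisely the analytic model of the substitution expressing each $v_i$ as a convergent series in the $v'_j$, so $\psi\circ v^*(f)$ is obtained from $v^*(f)$ by performing these substitutions, which is exactly $v'^*(f)$. The main obstacle is the first step: one must arrange a \emph{single} resolution and a single point $b_\alpha$ above $a_\alpha$ on which both parameter systems are simultaneously generalized local parameters, so that the two toric neighborhoods sit over a common geometric model and the abstract change-of-variables machinery of Section~1.4 applies directly; once this common setup is in place, the remaining verifications are routine consequences of the almost-monomial structure and the uniqueness of Laurent expansions.
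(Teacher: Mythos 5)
The paper states this corollary without writing out a proof (it is one of five corollaries listed after Theorem \ref{non-toric neighborhood} and is clearly intended to follow from the toric-neighborhood machinery plus the change-of-variables theorem of Section~1.4), so there is no explicit argument to match against; what you wrote is the natural reconstruction of the intended reasoning. The three steps --- choose a single resolution in which both $(u_1,\dots,u_n)$ and $(u'_1,\dots,u'_n)$ become almost monomial (and hence, by Lemma \ref{local param. -> gen. local param.}, generalized local parameters with unipotent lower-triangular valuation matrices at the same $b_\alpha$), check the two hypotheses of Theorem \ref{change of variables} for the tuple $(v^*(v'_1),\dots,v^*(v'_n))$, and then read off $v'^*=\psi\circ v^*$ from the fact that both sides of the equality expand the same meromorphic germ against compatible toric models over $\overline V_n$ --- are exactly what the paper's framework supports.

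One small remark: your verification of condition~(2) is more roundabout than necessary. In the normal setting $\hat L=(\hat L^n)_{\ge0}$, so by the remark following the definition of $O(L)$, membership in $O(\hat L)$ is equivalent to $\nu\ge 0$; since $\nu$ is a homomorphism by Theorem \ref{F(L) is a Field}, $\nu\bigl(\prod f_i^{k_i}\bigr)=\prod\nu(f_i)^{k_i}$, and condition~(2) is immediate. You reach the same conclusion via the "monomial times unit" decomposition, which is correct but introduces the extra step of bounding the Laurent support of the unit. Also, for condition~(1), it would be cleaner to say explicitly that $E=D'D^{-1}$ is lower-triangular unipotent, hence $\nu(v^*(v'_i))=v_1^{e_{i1}}\cdots v_{i-1}^{e_{i,i-1}}v_i$ lies in $\hat L^i_+$ with $\nu(v^*(v'_1))=v_1$ the generator of $\hat L^1_+$, and the collection forms a basis of $\hat L^n$ since $\det E=1$; this is what "standard generators of a cone in $\hat{\cal C}$" requires. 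These are presentation points, not gaps; the proposal is correct.
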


\begin{cor}
Parshin's residue doesn't depend on the choice of local parameters.
\end{cor}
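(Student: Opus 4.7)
The plan is to combine the two preceding corollaries of Theorem \ref{non-toric neighborhood} (the one expressing $res_P(\omega)$ as a Laurent-series residue, and the one stating that a second choice of local parameters is related to the first by a change of variables) with the invariance of the Laurent-series residue under changes of variables established at the end of Section 1.4.

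First, fix two systems of local parameters $(u_1,\dots,u_n)$ and $(u'_1,\dots,u'_n)$ at the Parshin point $P$. Applying Theorem \ref{resolution for flag} to the combined list of $2n$ functions, I obtain a single resolution $\pi:\overline{V}_n\to V_n$ of the flag on which both $v_i:=u_i\circ\pi$ and $v'_i:=u'_i\circ\pi$ are simultaneously almost monomial in good coordinates near the chosen preimage $b_\alpha$ of $a_\alpha$. By Lemma \ref{local param. -> gen. local param.} each of $(v_1,\dots,v_n)$ and $(v'_1,\dots,v'_n)$ is then a set of generalized local parameters at $b_\alpha$, and so each produces its own toric system $\hat{\mathcal T}$, $\hat{\mathcal T}'$ together with pullback maps $v^*:F(V_n)\to F(\hat L)$ and $v'^*:F(V_n)\to F(\hat L')$ as in Theorem \ref{non-toric neighborhood}.

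Next, let $\omega$ be a meromorphic $n$-form on $V_n$ and write $\omega=f\,dv_1\wedge\cdots\wedge dv_n=f'\,dv'_1\wedge\cdots\wedge dv'_n$. The residue-formula corollary above (the third corollary of Theorem \ref{non-toric neighborhood}) shows that the Parshin residue computed using the $u$-parameters equals $res(v^*(f)\,dv_1\wedge\cdots\wedge dv_n)$, while the one computed using the $u'$-parameters equals $res(v'^*(f')\,dv'_1\wedge\cdots\wedge dv'_n)$, the right-hand sides being the intrinsic Laurent-series residues in $\Omega(\hat L)$ and $\Omega(\hat L')$. Thus it suffices to show these two Laurent-series residues agree in $\mathbb C$. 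For this I use the change-of-variables corollary above: the $n$-tuple $(v^*(v'_1),\dots,v^*(v'_n))$ defines a change of variables in the sense of Section 1.4, inducing an isomorphism $\psi:F(\hat L)\to F(\hat L')$ with $v'^*=\psi\circ v^*$. Extending $\psi$ to the module of meromorphic $n$-forms via the corresponding analytic isomorphism $\hat G$ between neighborhoods of the origins, one obtains $\psi_*(v^*\omega)=v'^*\omega$, and the lemma ``Changes of variables doesn't change the residue'' from Section 1.4 then yields $res(v'^*\omega)=res(\psi_*(v^*\omega))=res(v^*\omega)$, which is exactly what is required.

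The only delicate point is the passage from the algebraic identity $v'^*=\psi\circ v^*$ on functions to its analogue on forms, namely $\psi_*(v^*\omega)=v'^*\omega$. This is not a separate computation but rather a consequence of the fact that $\psi$ is induced by a genuine analytic isomorphism of neighborhoods of the origins in the two toric varieties (provided by Theorem \ref{change of variables}), which automatically intertwines pullbacks of differentials; once this naturality is in place, the remaining argument is a one-line combination of the ingredients above.
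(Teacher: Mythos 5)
Your proposal is correct and follows the approach the paper intends. Since the paper lists this statement without an explicit proof, as a corollary of Theorem \ref{non-toric neighborhood}, the intended reasoning is exactly the chain you assemble: the residue-formula corollary identifies $res_P(\omega)$ with a toric Laurent-series residue, the change-of-variables corollary relates the two pullbacks by an isomorphism $\psi$ coming from an analytic isomorphism of neighborhoods, and the lemma on invariance of the residue under changes of variables closes the loop. Your explicit step of using a single resolution that simultaneously monomializes both parameter systems (via Theorem \ref{resolution for flag} applied to the combined $2n$ functions) is the detail implicitly underlying the paper's Corollary 4, where $v^*(v'_i)$ is taken to be defined; making it explicit is a reasonable clarification, not a deviation.
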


Note, that $\hat{\phi}|_{\hat{U}^k}$ for $k<n$ is not an isomorphism to the image. It is rather a
covering, at least locally (one should be careful defining what this locality actually means).
Indeed, $\hat{\phi}=\pi\circ \psi,$ where $\psi|_{\hat{U}^k}$ is an isomorphism to the image for
all $k=0,\dots,n,$ while $\pi|_{\overline{V}_k}$ can be a branched covering of a higher degree with
some branching near our flag. We want to improve it in such a way that $\phi|_{U^k}$ is an
isomorphism to the image for all $k=0,\dots,n.$ In particular, it will help us to understand the
local geometry of $V^n$ near the flag $V^n\supset\dots\supset V^0.$ In order to do so, one needs to
consider special generalized local parameters, which we call {\it toric parameters} and more
general systems of (non-normal) toric varieties, associated to flags of lattices, different from
the flag of isolated subgroups.

Let $\rho_k:\widetilde{V}_k\to V_k$ be the normalization maps for $k=0,\dots,n$ ($\rho_n=p_n$). Let
$\hat{V}_{k-1}\subset\widetilde{V}_k$ be general primages of hypersurfaces $V_{k-1}\subset V_k$
under $\rho_k.$ For every $k=1,\dots,n,$ let $t_k$ be a meromorphic function on $\widetilde{V}_k$
which has zero of order $1$ at a general point of $\hat{V}_{k-1}.$ One can think of $t_k$ as a
meromorphic function on $V_k$ as well and, moreover, for every $k$ let us continue $t_k$ to a
meromorphic function on the whole $V_n.$ For simplicity, we'll denote this continuation by $t_k$ as
well.

By applying the Theorem \ref{resolution for flag} several times one can get the following diagram:

$$
\begin{CD}
V_n^n   @.\supset @. V_{n-1}^n @.\supset @. \dots @.\supset @. V_1^n @.\supset @. V_0^n \\
@VV\pi_nV   @.      @VV\pi_nV      @.        @.       @.      @VV\pi_nV  @.      @VV\pi_nV \\
\vdots  @.        @. \vdots    @.        @.       @.        @. \vdots @.       @. \vdots \\
@VV\pi_3V   @.      @VV\pi_3V      @.        @.       @.      @VV\pi_3V  @.      @VV\pi_3V \\
V_n^2   @.\supset @. V_{n-1}^2 @.\supset @. \dots @.\supset @. V_1^2 @.\supset @. V_0^2 \\
@VV\pi_2V   @.      @VV\pi_2V      @.        @.       @.      @VV\pi_2V  @.      @VV\pi_2V \\
V_n^1   @.\supset @. V_{n-1}^1 @.\supset @. \dots @.\supset @. V_1^1 @.\supset @. V_0^1 \\
@VV\pi_1V   @.      @VV\pi_1V      @.        @.       @.      @VV\pi_1V  @.      @VV\pi_1V \\
V_n     @.\supset @. V_{n-1}   @.\supset @. \dots @.\supset @. V_1   @.\supset @. V_0
\end{CD}
$$

\noindent where $(\pi_1\circ\pi_2\circ\dots\circ\pi_k)|_{V_k^k}:V_k^k\to V_k$ is a resolution of
singularities of the flag $V_k\supset\dots\supset V_0$ respecting the functions $t_1,\dots,t_k.$

Let $b_{\alpha}\in V_0^n$ as before. Let $(x_1,\dots,x_k)$ be good coordinates in a neighborhood of
the $b_{\alpha}^k:=\pi_{k+1}\circ\dots\circ\pi_{n}(b_{\alpha}).$ Let
$\tilde{t}_i:=(\pi_1\circ\dots\circ\pi_k)^*(t_i)$ for $i=1,\dots,k.$ Let $D^k=\{d_{ij}^k\}$ be the
valuation matrix of $(\tilde{t}_1,\dots,\tilde{t}_k)$ with respect to $(x_1,\dots,x_k)$ and let
$C^k=\{c_{ij}^k\}=(D^k)^{-1}$ be the inverse matrix.

\begin{lem}
$d_{ij}^k=0$ for $i<j,$ $d_{kk}^k=1,$ and $d_{ii}^k\neq 0$ for $i=1,\dots,k-1.$
\end{lem}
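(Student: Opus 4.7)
The plan is to adapt the proof of Lemma \ref{local param. -> gen. local param.} to the present setup. The essential structural facts I will use are: each $t_i$ is a meromorphic function on $V_i$ that vanishes to order $1$ at a generic point of $\hat{V}_{i-1}\subset\widetilde{V}_i$ but is neither identically zero nor identically infinite on $V_i$; the restriction $(\pi_1\circ\dots\circ\pi_k)|_{V_i^k}: V_i^k\to V_i$ is a branched covering (built from iterated generic preimages); and by Theorem \ref{degree one covering} this map factors through the normalization $\widetilde{V}_i\to V_i$.

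First I would prove $d_{ij}^k=0$ for $i<j$. In the good coordinates write $\tilde{t}_i = x_1^{d_{i1}^k}\cdots x_k^{d_{ik}^k}\phi_i$ with $\phi_i$ holomorphic and non-vanishing near $b_\alpha^k$. Since $V_i^k=\{x_{i+1}=\dots=x_k=0\}$ and the branched covering $V_i^k\to V_i$ is surjective, the restriction $\tilde{t}_i|_{V_i^k}$ is a non-trivial meromorphic function (it pulls back $t_i|_{V_i}$, which is non-trivial). However, if some $d_{ij}^k>0$ with $j>i$ then the monomial factor vanishes identically on $V_i^k$ and if some $d_{ij}^k<0$ it diverges identically, while $\phi_i|_{V_i^k}$ is holomorphic and nowhere zero. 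Either case contradicts non-triviality, so all $d_{ij}^k=0$ for $j>i$.

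Next I would establish $d_{kk}^k=1$ by the exact argument of Lemma \ref{local param. -> gen. local param.}. The map $V_k^k\to V_k$ factors as $V_k^k\to\widetilde{V}_k\to V_k$, where $\widetilde{V}_k\to V_k$ is the normalization and $V_k^k\to\widetilde{V}_k$ is a degree-one branched covering. By Lemma \ref{codimension}, the critical locus of $V_k^k\to\widetilde{V}_k$ has codimension at least $2$, so at a generic point of the hypersurface $V_{k-1}^k=\{x_k=0\}\subset V_k^k$ this map is a local isomorphism onto $\hat{V}_{k-1}$. Since $t_k$ has zero of order exactly $1$ along $\hat{V}_{k-1}$, the pullback $\tilde{t}_k$ vanishes to order $1$ along $\{x_k=0\}$, forcing $d_{kk}^k=1$.

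Finally, for $d_{ii}^k\neq 0$ with $i<k$, I use the already-established form $\tilde{t}_i|_{V_i^k}=x_1^{d_{i1}^k}\cdots x_i^{d_{ii}^k}\,\phi_i|_{V_i^k}$. The branched covering $V_i^k\to V_i$ factors through $\widetilde{V}_i$, and by the construction of generic preimages it carries a generic point of $V_{i-1}^k=\{x_i=\dots=x_k=0\}\cap V_i^k$ to a generic point of $\hat{V}_{i-1}\subset\widetilde{V}_i$ where $t_i$ vanishes. Hence $\tilde{t}_i|_{V_i^k}$ vanishes along the hypersurface $\{x_i=0\}\subset V_i^k$ with a positive order equal to the local ramification index of this covering, and that order is precisely $d_{ii}^k$. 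Therefore $d_{ii}^k>0$, in particular $d_{ii}^k\neq 0$. The one point requiring care throughout is the factorization through the normalization and the use of Theorem \ref{degree one covering} to transfer the zero locus of $t_i$ on $\widetilde{V}_i$ to a zero locus of $\tilde{t}_i$ on $V_i^k$; once that is clean, the coordinate reading of the orders of vanishing is routine.
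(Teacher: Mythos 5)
Your proof is correct and follows essentially the same approach as the paper, which simply states ``Similar to Lemma \ref{local param. -> gen. local param.}.''\ and leaves the details to the reader. You have correctly identified the one genuine difference from that earlier lemma and filled it in: since $t_k$ lives on the single normalization $\widetilde{V}_k$ and $V_k^k\to V_k$ is a degree-one branched covering, the factorization $V_k^k\to\widetilde{V}_k$ is degree one and Lemma \ref{codimension} gives $d_{kk}^k=1$; whereas for $i<k$ the branched covering $V_i^k\to\widetilde{V}_i$ need not have degree one, so it may ramify along $V_{i-1}^k$ and one can only conclude $d_{ii}^k$ is a positive integer. One small point to tighten: in your argument for $d_{ij}^k=0$ when $j>i$, if the exponents $d_{ij}^k$ for $j>i$ have mixed signs then setting $x_{i+1}=\dots=x_k=0$ simultaneously produces an indeterminate form $0\cdot\infty$ rather than something identically $0$ or $\infty$, so the contradiction with non-triviality is not immediate. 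The clean fix, as in the model proof, is to restrict one coordinate at a time from $\{x_k=0\}$ down to $\{x_k=\dots=x_{i+1}=0\}$, using that $\tilde t_i$ restricts to a non-trivial meromorphic function on each $V_j^k$ for $j\ge i$; then the first nonzero exponent among $d_{ik}^k,d_{i,k-1}^k,\dots,d_{i,i+1}^k$ already forces an identically zero or identically infinite restriction, giving the contradiction.
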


\begin{proof}
Similar to Lemma \ref{local param. -> gen. local param.}.
\end{proof}

Let
$$
\begin{array}{l}
t_1^k=t_1^{c_{11}^k}\dots t_1^{c_{1k}^k};\\
t_2^k=t_1^{c_{21}^k}\dots t_1^{c_{2k}^k};\\
\vdots \\
t_k^k=t_1^{c_{k1}^k}\dots t_n^{c_{kk}^k},
\end{array}
$$
\noindent and let $L^k$ be the lattice of monomials in $(t_1^k,\dots,t_k^k).$ Here $L_k$ is a
subgroup in the multiplicative group of monomials in $(t_1,\dots,t_n)$ with rational powers. Note,
that although $t_i^k$ are multivalued functions globally, they don't any branching near
$b_{\alpha}\in V_n^n.$ We choose the branches simultaneously for all $k=1,\dots,n$. By abuse of
notations we denote these branches by $t_i^k$ as well.

\begin{lem}
$L^n\supset\dots\supset L^1\supset L^0:=\{0\}$ is a flag of lattices, where $L^n$ is endowed with
the lexicographic order with respect to the basis $(t_1^n,\dots,t_n^n)$ (first with respect to
$t_n^n,$ then $t_{n-1}^n,$ etc.). Moreover, the cone generated by $(t_1^{k-1},\dots,t_{k-1}^{k-1})$
is a subset of the cone generated by $(t_1^k,\dots,t_k^k).$
\end{lem}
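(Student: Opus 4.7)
The plan is to verify, in order, that $L^n$ is a rank-$n$ ordered abelian group with the prescribed lex order and isolated-subgroup filtration, that each $L^k$ is a full-rank sublattice of the $k$th isolated subgroup and the $L^k$'s form a descending flag, and finally that the cone inclusion holds. All of this will be read off from valuations computed in good coordinates at three levels: $V_n^n$, $V_k^n\subset V_n^n$, and $V_k^k$.

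First, at $b_\alpha\in V_n^n$ with good coordinates $(x_1,\dots,x_n)$, the previous lemma gives $D^n$ lower-triangular with nonzero integer diagonal and $d^n_{nn}=1$. The formula defining $t_i^n$ then yields $t_i^n=x_i h_i$ for units $h_i$, so $\nu(t_i^n)=e_i$ in $(x_1,\dots,x_n)$. The assignment $t_i^n\mapsto e_i$ is therefore a group isomorphism $L^n\to\mathbb Z^n$ carrying the lex order on $L^n$ (in the basis $(t_1^n,\dots,t_n^n)$) to the standard lex order on $\mathbb Z^n$; Theorem~\ref{Ordered Groups} then identifies the isolated subgroups as $\hat L^k=\langle t_1^n,\dots,t_k^n\rangle$.

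Next I would show $L^k\subset\hat L^k$ is of full rank $k$ (from which $L^{k-1}\subset L^k$ follows by the same argument at level $k-1$). Lower-triangularity of $D^n$ forces $t_j$ for $j\le k$ to be almost monomial in only $x_1,\dots,x_j\subset x_1,\dots,x_k$, while on $V_k^k$ with coordinates $(x_1^k,\dots,x_k^k)$ these same functions are almost monomial with valuation matrix $D^k$. The morphism $V_k^n\to V_k^k$ (restriction of $\pi_{k+1}\circ\cdots\circ\pi_n$) is a composition of blow-ups along flag centers, hence by the discussion of Subsection~1.2 it induces a coordinate change $x^k_j=\prod_i x_i^{a^k_{ji}}\cdot g_j$ with $A^k=(a^k_{ji})$ a unimodular upper-triangular integer matrix and $g_j$ units. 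Comparing valuations gives $D^n_{k\times k}=D^k A^k$, so the valuation of $t_i^k=\prod_j t_j^{c^k_{ij}}$ in $(x_1,\dots,x_n)$ equals the $i$th row of $C^k D^n_{k\times k}=A^k$, padded with zeros. This vector is integer and supported on the first $k$ coordinates, so $t_i^k\in\hat L^k$; unimodularity of $A^k$ implies the $t_i^k$'s form a $\mathbb Z$-basis of a full-rank sublattice.

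For the cone inclusion, it suffices to show each $t_i^{k-1}$ has non-negative valuation at $b_\alpha^k$ in $(x_1^k,\dots,x_k^k)$ on $V_k^k$; since $\nu_{V_k^k}(t_j^k)=e_j$, this translates directly into $t_i^{k-1}=\prod_j(t_j^k)^{m_{ij}}$ with $m_{ij}\ge 0$. The $k$th component of $\nu_{V_k^k}(t_i^{k-1})$ vanishes because $t_i^{k-1}$ is a rational monomial in $t_1,\dots,t_{k-1}$ and $d^k_{jk}=0$ for $j<k$. For the remaining components I restrict to $V_{k-1}^k=\{x_k^k=0\}\subset V_k^k$: the restriction of $\pi_k$ is a blow-up $V_{k-1}^k\to V_{k-1}^{k-1}$ sending $b_\alpha^k$ to $b_\alpha^{k-1}$, and on $V_{k-1}^{k-1}$ the function $t_i^{k-1}=x_i^{k-1}h_i^{k-1}$ is regular at $b_\alpha^{k-1}$; pullback along a blow-up preserves regularity, giving the required non-negativity. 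The main obstacle is the bookkeeping across the three coordinate systems, in particular verifying that $A^k$ is a unimodular integer matrix --- this is what makes $L^k$ a genuine sublattice of $L^n$ rather than a $\mathbb Q$-subspace, and it ultimately rests on the fact that every transition in play arises from blow-ups along flag centers.
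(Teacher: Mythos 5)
Your argument for the cone inclusion (the last paragraph) is sound and matches the paper's approach: $\pi_k|_{V_{k-1}^k}\colon V_{k-1}^k\to V_{k-1}^{k-1}$ is a regular map sending $b_\alpha^k$ to $b_\alpha^{k-1}$, so the pullback of the coordinate function $t_i^{k-1}$ is regular at $b_\alpha^k$; written as a rational monomial in the coordinate system $(t_1^k,\dots,t_{k-1}^k)$ on $V_{k-1}^k$, regularity (and single-valuedness) forces the exponents to be nonnegative integers. What you miss is that, as the paper points out, this single claim suffices for the \emph{entire} lemma by descending induction: it gives $L^{k-1}\subset L^k$, it gives $t_i^{k-1}\in\hat L^{k-1}$ because each $t_j^k$ ($j\le k-1$) only involves $t_1,\dots,t_{k-1}$ and hence lands in the $\mathbb Q$-span of $\hat L^{k-1}$, and full rank is automatic from $C^k$ being invertible over $\mathbb Q$. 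You did not need the second, elaborate paragraph at all.

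The second paragraph contains a real error, not merely a gap. You claim the transition matrix $A^k$ relating good coordinates on $V_k^n$ and on $V_k^k$ is a \emph{unimodular} upper-triangular integer matrix, and you derive $C^kD^n_{k\times k}=A^k$. But if $A^k$ were unimodular, the vectors $\nu(t_1^k),\dots,\nu(t_k^k)$ would form a $\mathbb Z$-basis of $\mathbb Z^k\cong\hat L^k$, forcing $L^k=\hat L^k$ for every $k$. That would make every cone $C\in\mathcal C$ equal to its saturation $\hat C$, every $T_C$ normal, and the normalization map $\nu\colon\hat U\to U$ an isomorphism --- directly contradicting the paper's explicit remark that $\hat\phi|_{\hat U^k}$ is a genuine covering (not an isomorphism) for $k<n$, which is the entire raison d'\^etre of the toric parameters. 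Independently, the ingredient you use to justify this claim --- that the restriction of $\pi_{k+1}\circ\cdots\circ\pi_n$ to $V_k^n\to V_k^k$ takes good coordinates to good coordinates by a \emph{monomial} change of variables --- is not established anywhere and does not follow from the flag being coordinate subspaces on both sides; almost-monomiality is a property of the pullbacks of the $t_i$'s, not of the coordinates themselves. You should abandon this part entirely and instead observe, as the paper does, that the positive-integer-powers claim you already proved handles the flag-of-lattices assertions for free.
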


\begin{proof}
It is enough to prove that $(t_1^{k-1},\dots,t_{k-1}^{k-1})$ are monomials with positive integer
powers in $(t_1^k,\dots,t_{k-1}^k).$ Indeed, we have the map $\pi_k|_{V_{k-1}^k}: V_{k-1}^k\to
V_{k-1}^{k-1},$ which is continuous. $(t_1^{k-1},\dots,t_{k-1}^{k-1})$ are coordinates in a
neighborhood of $b_{\alpha}^{k-1}=\pi_k(b_{\alpha}^k)$ in $V_{k-1}^{k-1}$ and
$(t_1^k,\dots,t_{k-1}^k)$ are coordinates in a neighborhood of $b_{\alpha}^k$ in $V_{k-1}^k.$
Therefore, the powers have to be positive and integer.
\end{proof}

Let ${\cal C},$ $L,$ and ${\cal T}$ be, respectively, system of cones, semigroup, and system of
toric varieties, associated to the flag of lattices $L^n\supset\dots\supset L^1\supset L^0.$

Note, that $t_1^n,\dots,t_n^n$ are generalized local parameters (here we consider
$t_1^n,\dots,t_n^n$ as functions in a neighborhood of $b_{\alpha}\in V_n^n$). So, one can apply the
Theorem \ref{non-toric neighborhood} to them.

Note, that lattice $\hat{L}^n=L^n.$ So, the flag of isolated subgroups $\hat{L}^n\supset
\hat{L}^{n-1}\supset\dots\supset \hat{L}^0$ is the normalization of the flag of lattices
$L^n\supset\dots\supset L^1\supset L^0.$ Let $\hat{\phi}:\hat{U}\to V_n$ be the map from the
Theorem \ref{non-toric neighborhood}. Here $\hat{U}\subset \hat{T}_{t^n}\in\hat{\cal T}$ is a
neighborhood of the origin. Let $T_{t^n}\in {\cal T}$ be the variety which normalization is
$\hat{T}_{t^n}$ and let $\nu:\hat{U}\to U$ be the restriction of the normalization map to $\hat{U}$
($U=\nu(\hat{U})$). Let $U^k=\nu(\hat{U}^k)$ (i.e. the intersection of $U$ with the corresponding
toric orbit in $T_{t^n}$).

\begin{te}
The map $\hat{\phi}:\hat{U}\to V_n$ factors through the normalization map $\nu:\hat{U}\to U,$ i.e.
$\hat{\phi}=\phi\circ\nu$ where $\phi:U\to V_n.$ Moreover, $\phi|_{U^k}$ is an isomorphism to the
image inside $V_k$ for all $k=0,1,\dots,n.$
\end{te}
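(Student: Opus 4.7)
The plan is to split the argument into two tasks: (i) establishing the factorization $\hat{\phi}=\phi\circ\nu$ together with the holomorphy of $\phi$; (ii) verifying the isomorphism property of each $\phi|_{U^k}$. For (i), the crux is to show that for every germ of a regular function $f$ on $V_n$ at $a$, the power series expansion of $\hat{\phi}^* f$ in the standard coordinates $(t_1^n,\dots,t_n^n)$ has support contained in the semigroup $L$, i.e.\ $\hat{\phi}^* f\in O(L)$. Granted this, the set-theoretic factorization is immediate because $\nu$ identifies two points exactly when they agree on every monomial from $L$, and the holomorphy of the descended map $\phi$ then follows from the identification of $O(L)$ with the ring of germs at the origin in the inverse limit of $\mathcal{T}$.

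The support statement I plan to prove by induction on $n$. The base case $n=1$ is immediate since $L^1=\hat{L}^1$ and $T_{t^1}$ is already normal, so the factorization is trivial. For the inductive step, group the expansion as $\hat{\phi}^* f=\sum_{a_n\ge 0}g_{a_n}(t_1^n,\dots,t_{n-1}^n)(t_n^n)^{a_n}$. The hypersurface $\{t_n^n=0\}\cap \hat{U}$ is carried by $\psi$ onto $\overline{V}_{n-1}\cap W$ and hence by $\hat{\phi}$ into $V_{n-1}$, which lets one identify $g_0$ with a pullback of $f|_{V_{n-1}}$. Using the lemma preceding the theorem, which says that each $t_i^{n-1}$ is a positive integer monomial in $t_1^n,\dots,t_{n-1}^n$, the level-$(n-1)$ expansion in the variables $(t_1^{n-1},\dots,t_{n-1}^{n-1})$ translates into an expansion in $(t_1^n,\dots,t_{n-1}^n)$ whose support lies in the image of the level-$(n-1)$ semigroup; the inductive hypothesis then forces the support of $g_0$ into $L\cap \hat{L}^{n-1}$. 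For $a_n>0$, the resulting monomials land automatically in $H^n_+\subset L$, so the total support of $\hat{\phi}^*f$ belongs to $L$.

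For task (ii), the key structural observation is that the closure $\overline{T^k}$ of the $k$-dimensional orbit in $T_{t^n}$ is naturally isomorphic to the toric variety attached to the truncated flag $L^k\supset\dots\supset L^0$: intersecting the defining semigroup $C=\hat{C}\cap L$ with the isolated subgroup $\hat{L}^k$ kills every monomial involving $t_j^n$ for $j>k$ and reproduces exactly the level-$k$ semigroup. Under this identification, $U^k$ becomes a neighborhood of the origin in the top orbit of the level-$k$ toric variety, and $\phi|_{U^k}$ reduces to the top-orbit restriction of the analogous map built at level $k$; since normalization is an isomorphism on the top (dense) orbit, this restriction is literally $\hat{\phi}_k|_{\hat{U}_k^k}$, which is an isomorphism onto its image in $V_k$ by part (2) of Theorem \ref{non-toric neighborhood} applied at level $k$. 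The main technical obstacle I anticipate is the inductive support analysis in the second paragraph: carefully matching the restriction of $\hat{\phi}^* f$ to $\{t_n^n=0\}$ with the level-$(n-1)$ construction across two a priori different resolutions of $V_{n-1}$, and tracking the lattice inclusions that convert a support statement in one coordinate system into the required support statement in another.
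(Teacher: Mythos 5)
The paper's proof is much more direct than yours and sidesteps the difficulties you flag. For the factorization, the paper does not try to characterize the image of $\hat{\phi}^*$ on all regular germs; instead it picks the finitely many functions $t_1^k,\dots,t_k^k$ (which are coordinates on $V_k^k$ near $b_\alpha^k$ and, crucially, also monomials in $L$, hence well-defined on $U\setminus\overline{U^{k-1}}$) and uses them directly to show that two points $y_1,y_2\in\hat{U}$ with $\nu(y_1)=\nu(y_2)=x\in U^k$ must have $\pi_{k+1}\circ\dots\circ\pi_n\circ\psi(y_1)=\pi_{k+1}\circ\dots\circ\pi_n\circ\psi(y_2)$ in $V_k^k$. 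For the isomorphism of $\phi|_{U^k}$, the paper factors $\phi|_{U^k}=\pi_1\circ\dots\circ\pi_k\circ\psi_k$ inside the one nested resolution tower and checks that $\psi_k$ is an isomorphism onto its image via the non-degeneracy of the top form $dt_1^k\wedge\dots\wedge dt_k^k$, and that $\pi_1\circ\dots\circ\pi_k$ is an isomorphism off the exceptional divisor. Your approach --- prove $\hat{\phi}^*f\in O(L)$ for every regular germ and deduce the factorization, and reduce $\phi|_{U^k}$ to a level-$k$ statement by identifying $\overline{T^k}$ with a level-$k$ toric variety --- is a genuinely different route, closer in spirit to a ``universal property'' argument. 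It is conceptually attractive (and your reduction $O(L)$-plus-holomorphy $\Rightarrow O(C)$ is a sound fix to your slightly imprecise claim that $\nu$ identifies points agreeing on all of $L$ rather than on $C$), but it shifts the work to comparisons across levels that you do not carry out.

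The gaps are real and not merely bookkeeping. In part (i), the inductive step needs $g_0$ (the restriction of $\hat{\phi}^*f$ to $\{t_n^n=0\}$, which lives on $\overline{V}_{n-1}\subset V_{n-1}^n$) to be identified with a level-$(n-1)$ pullback $\hat{\phi}_{n-1}^*(f|_{V_{n-1}})$ built from the resolution $V_{n-1}^{n-1}\to V_{n-1}$. But $V_{n-1}^n$ sits over $V_{n-1}^{n-1}$ via $\pi_n|_{V_{n-1}^n}$, which is a further composition of blow-ups (done to make $t_n$ almost monomial) and need not be an isomorphism near $b_\alpha^{n-1}$; you acknowledge this as the ``main technical obstacle'' but do not resolve it. The same issue resurfaces, unflagged, in part (ii): even granting the identification of $\overline{T^k}$ with a level-$k$ toric variety, the map $\phi|_{U^k}$ is built from the full tower $V_n^n\to V_n$, not from $V_k^k\to V_k$, so identifying it with the level-$k$ map $\hat{\phi}_k$ again requires controlling $\pi_{k+1}\circ\dots\circ\pi_n$ restricted to $\overline{V}_k$. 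Finally, the claimed isomorphism $\overline{T^k}\simeq T^{(k)}$ itself needs care: $\overline{T^k}$ has coordinate ring $k[C\cap\hat{L}^k]$, and $C\cap\hat{L}^k=(\text{cone gen.\ by }t_1^n,\dots,t_k^n)\cap L^{(k)}$, but the cone generated by $t_1^n,\dots,t_k^n$ lives in $\hat{L}^k$ rather than in the level-$k$ lattice $L^k$, so it is not automatically a member of $\hat{\cal C}^{(k)}$; you would have to show that its intersection with $L^k$ is simple and that its generators can be taken in $L^k$. None of these points is obviously insurmountable, but as written the proposal reduces the theorem to unproved statements of comparable difficulty, whereas the paper's argument closes them all at once by working inside the single nested resolution diagram.
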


\begin{proof}
According to the Lemma \ref{almost normal} it is enough to show that $\hat{\phi}$ factors through
the normalization on the level of sets and that $\phi$ is continuous (then it is regular). More
over, similarly to the Theorem \ref{change of variables} the continuity follows immediately. The
only thing we need to check is that $\phi$ is well defined on the level of sets, i.e. that for any
$x\in U$ and $y_1,y_2\in\nu^{-1}(x)$ we have $\hat{\phi}(y_1)=\hat{\phi}(y_2).$ Indeed, let $k$ be
the smallest number such that $x\in \overline{U^k}.$ The functions $t_k^1,\dots,t_k^k$ provides
coordinates on $V_k^k$ near $b_{\alpha}^k.$ On the other hand, they belong to $L_k$ and, therefore,
they are regular on $U\backslash\overline{U^{k-1}},$ in particular, at $x.$

Note, that $\hat{\phi}=\pi_1\circ\dots\circ\pi_n\circ\psi.$ Denote
$z_i=\pi_{k+1}\circ\dots\circ\pi_n\circ\psi(y_i)\in V_k^k.$ It follows from the above, that
$t_k^1,\dots,t_k^k$ don't distinguish $z_1$ and $z_2.$ Therefore, $z_1=z_2.$

Consider now the restriction $\phi|_{U^k}.$ It follows from the above that
$\phi|_{U^k}=\pi_1\circ\dots\circ\pi_k\circ\psi_k,$ where $\psi_k$ is a regular map from $U^k$ to
the complement of the coordinate cross in a neighborhood of $b_{\alpha}^k$ in $V_k^k.$ Moreover,
$\psi_k$ is an isomorphism to the image (since $dt_k^1\wedge\dots\wedge dt_k^k$ is a non-degenerate
top form both in the image and the preimage). $\pi_1\circ\dots\circ\pi_k$ is an isomorphism to the
image on the complement to the exceptional divisor. Therefore, $\phi|_{U^k}$ is an isomorphism to
the image as well.
\end{proof}

\end{document}